\documentclass[11pt,reqno]{amsart}
\usepackage[a4paper, total={160mm, 257mm}, left=20mm, right=20mm, top=20mm, bottom=20mm]{geometry}
\usepackage{graphicx}
\usepackage{amsmath,amsfonts,amssymb,amsthm,amscd}
\usepackage{cite}
\usepackage{latexsym}
\usepackage{mathrsfs}
\usepackage{braket}
\usepackage{url}
\usepackage{xcolor}

\usepackage{xparse}

\makeatletter
\@namedef{subjclassname@2020}{%
  \textup{2020} Mathematics Subject Classification}
\makeatother

\usepackage[backref=page]{hyperref}

\numberwithin{equation}{section}

\newtheorem{thm}{Theorem}[section]

\newtheorem{lem}[thm]{Lemma}

\theoremstyle{definition}
\newtheorem{defn}{Definition}[section]
\theoremstyle{remark}
\newtheorem{rem}[thm]{Remark}

\renewenvironment{cases}{\left\{\begin{aligned}}{\end{aligned}\right.}	
\def\pt{\partial}

\newcommand{\ud}{\mathrm{d}}           
\newcommand{\ue}{\mathrm{e}}           
\newcommand{\R}{\mathbb{R}}            

\newcommand{\case}[1]{\left\{
	\begin{aligned}
		#1
	\end{aligned}\right.}

\newcommand{\sibb}[1]{\left(#1\right)}
\renewcommand{\set}[1]{\left\{#1\right\}}
\newcommand{\norm}[1]{\left| #1 \right|}

\newcommand{\map}[3]{#1:#2\to #3}

\NewDocumentCommand{\dderiv}{O{} m m}{\frac{\ud^{#1} #2}{\ud #3^{#1}}}     

\newcommand{\minus}{\backslash}
\newcommand{\intersect}{\cap}

\newcommand{\Union}{\bigcup}

\newcommand{\laplace}{\Delta}

\newcommand{\grad}{\nabla}

\newcommand{\qint}[3]{\int_{#1}#2\ud #3}
\renewcommand{\iint}[4]{\int_{#1}^{#2}#3\ud #4}

\renewcommand{\div}{\mathrm{div}}

\newcommand{\ball}[1][ ]{B_{#1}}

\newcommand{\finv}{\Lambda_{F}}
\newcommand{\Fgrad}{\grad^{F}}
\let\Fgrad\grad
\newcommand{\mdiv}{\operatorname{div}_m}
\newcommand{\legen}{\mathcal{L}}

\begin{document}
	
	\title[Liouville problems for $(p,q)$-Laplacian inequalities]{On Liouville problems for $(p,q)$-Laplacian inequalities on Finsler measure spaces}
	\author[T.~Wang]{Tiancheng~Wang}
	\address{School of Mathematics, Sichuan University, Chengdu 610065, Sichuan,  P.~R.~China}
	\email{\href{mailto:wtc303515022@gmail.com}{wtc303515022@gmail.com}}
	\author[C.~Xiong]{Changwei~Xiong}
	\address{School of Mathematics, Sichuan University, Chengdu 610065, Sichuan,  P.~R.~China}
	\email{\href{mailto:changwei.xiong@scu.edu.cn}{changwei.xiong@scu.edu.cn}}
	\author[D.~Zhao]{Dongjie~Zhao}
	\address{School of Mathematics, Sichuan University, Chengdu 610065, Sichuan,  P.~R.~China}
	\email{\href{mailto:kridomss@gmail.com}{kridomss@gmail.com}}
	\date{\today}
	\thanks{This research was supported by National Key R and D Program of China 2021YFA1001800 and NSFC (Grant no. 12171334).}
    \subjclass[2020]{{35R45}, {35J92}, {35K92}, {58J60}}
	\keywords{Liouville theorem, $(p,q)$-Laplacian, Finsler measure space}
	
	\begin{abstract}
		We study the Liouville property for nonnegative weak solutions of the $(p,q)$-Laplacian elliptic differential inequality
		\[ \laplace^{m}_{p}u(x)+\laplace^{m}_{q}u(x)+V(x)u^{s}(x)\leq 0 \]
		and the associated parabolic differential inequality
		\[ \pt_t u(x,t) \ge \laplace^{m}_{p}u(x,t)+\laplace^{m}_{q}u(x,t)+V(x,t)u^{s}(x,t) \]
		on a forward geodesically complete noncompact Finsler measure space $(M,F,m)$ with finite reversibility. Under several sets of integral growth conditions on the positive potential function over certain annular domains, we prove that any nonnegative weak solution vanishes almost everywhere. The proofs of our results are essentially based on the nonlinear capacity method.
	\end{abstract}
	\maketitle
	
\section{Introduction}
	The Liouville theorem for solutions of linear and nonlinear partial differential equations and inequalities has been a central topic since the first result given by Liouville and Cauchy (see \cite{SeZo02} for historical remarks). It has found significant applications in many theoretical aspects of partial differential equations, ranging from the regularity theory to quantitative properties; see, e.g., Section~1 in \cite{CiGo23} and references therein for an introduction. Let us first review some known Liouville-type results on elliptic and parabolic problems in the existing literature, and then present our main results of the paper.
\subsection{The elliptic case}  One systematic investigation on the Liouville property of entire solutions for nonlinear elliptic equations appeared in \cite{Serr72} by Serrin, in addition to the early contributions in \cite{BoJa62}. For the semilinear elliptic equation
	\[ \laplace u(x)+u^{s}(x)=0, \quad u(x)\geq 0, \quad x\in \mathbb{R}^{n}, \]
the celebrated work by Gidas and Spruck \cite{GiSp81} asserts that the solution when $n\geq 3$ is identically zero if $1\leq   s<(n+2)/(n-2)$; while it is known that for $s\geq (n+2)/(n-2)$ there exists a nontrivial nonnegative solution. They also treated the problem on complete Riemannian manifolds with nonnegative Ricci curvature. (See, e.g., \cite{Bida89} by Bidaut-V\'{e}ron for further developments in this direction.) In contrast, for the inequality
\begin{align*}
\laplace u(x)+u^{s}(x)\leq 0, \quad u(x)\geq 0, \quad x\in \mathbb{R}^{n},
\end{align*}
the solution when $n\geq 3$ is identically zero if $0<s\leq n/(n-2)$ (see, e.g., \cite{MiPo98}).
\begin{rem}
It is interesting to observe (see, e.g., p.~133 in \cite{MiPo04}) that in general, critical exponents (like $(n+2)/(n-2)$ and $n/(n-2)$ above) for elliptic equations and inequalities are different; while for parabolic equations and inequalities discussed below in Section~\ref{sec1.2} critical exponents coincide.
\end{rem}
Next, moving to more general framework, in a series of papers \cite{CaDM08,CaMP09,MiPo98,MiPo99,MiPo01,MiPo04,Pokh97,Poho09}, the authors of these papers studied the Liouville problems of elliptic inequalities on $\R^n$, by use of the nonlinear capacity method introduced by Mitidieri and Pohozaev \cite{MiPo98,MiPo01}.
\begin{rem}
The nonlinear capacity of differential operators was introduced by Pokhozhaev in \cite{Pokh97} and see \cite{MiPo04,Poho09} for an overview of the method using it. Roughly speaking, the method consists of first deriving a priori integral estimates for local weak solutions and then analyzing the asymptotic behavior of these estimates with respect to the related parameters of the problem (\cite{MiPo04,Poho09}).
\end{rem}
The nonlinear capacity method cannot be directly extended to Riemannian manifolds without imposing curvature conditions. To avoid assuming curvature conditions, motivated by Kurta's work \cite{Kurt99}, Grigor'yan and Kondratiev \cite{GrKo10} considered a variant of the method. This variant allows them to study the Liouville property of the solution to the divergence-form semilinear inequalities
\begin{align}\label{eq-GK}
\div\sibb{A(x)\grad u}+V(x)u^{s}\leq 0
\end{align}
with a nonnegative definite $(1,1)$-tensor $A(x)$ and a potential $V(x)$ on Riemannian manifolds, under only volume growth conditions.
\begin{rem}
The idea of using volume growth conditions to prove the Liouville property may date back to Cheng and Yau's work \cite{ChYa75}. There they proved that, in a complete Riemannian manifold $M$, if the volume of a geodesic ball grows at most like a quadratic polynomial, then positive superharmonic functions defined on $M$ must be constant functions. We also note that generally curvature conditions imply volume conditions (e.g., by Bishop--Gromov volume comparison theorem). Thus volume conditions are weaker than curvature conditions in some sense.
\end{rem}
\begin{rem}
There are so many works in the literature devoted to Liouville problems that we can hardly mention all of them. In this paper, we mainly focus on those which use or are related to (weighted) volume growth conditions.
\end{rem}
Furthermore, Grigor'yan and Sun \cite{GrSu14} succeeded in obtaining the Liouville-type result for \eqref{eq-GK} with the sharp volume growth condition when $A(x)$ is the identity map and $V(x)\equiv 1$. Some generalizations of the result \cite{GrSu14} are given by Sun \cite{Sun14,Sun15} and further developments involving Green's functions can be found in \cite{GrSu19}.
	
	
The above-mentioned nonlinear capacity method due to Mitidieri and Pohozaev \cite{MiPo98,MiPo01} can also allow them to study the quasilinear inequality
	\[  \laplace_p u + u^{s}\leq 0 \]
	on $\R^{n}$ and its exterior domains, where $\laplace_{p} u=\div\sibb{\norm{\grad u}^{p-2}\grad u}$ ($p>1$) is the  $p$-Laplacian operator. They established Liouville-type theorems under the condition $0<s\leq n(p-1)/(n-p)$ when $p<n$, as well as for inequalities with gradient-dependent nonlinearities. Subsequently, Bidaut-V\'{e}ron and Pohozaev \cite{BiPo01} as well as Serrin and Zou \cite{SeZo02} were able to obtain refined deep results in more general situations. The approach in \cite{SeZo02} is quite different, relying on an extensive employment of the comparison method and an estimate from below for the solution far from the origin. Back to the Riemannian setting, along the line indicated by Grigor'yan and Sun's \cite{GrSu14}, Mastrolia, Monticelli and Punzo \cite{MaMP15} studied the Liouville property for the more general elliptic differential inequality
\begin{align*}
\frac{1}{a(x)}\mathrm{div}(a(x)|\nabla u|^{p-2}\nabla u) +V(x) u^{s}\leq 0
\end{align*}
on a Riemannian manifold $M$, where $a(x)\in \mathrm{Lip}_{\mathrm{loc}}(M)$ is a positive function and $V(x)\in L^1_{\mathrm{loc}}(M)$ is a positive potential function. Some other generalizations of \cite{GrSu14, Sun15} were given in \cite{Zhao25b}.
	
	On the other hand, the study on the $(p,q)$-Laplacian $\laplace_{p}+\laplace_{q}$ has attracted considerable interest because of its importance in nonlinear elasticity and calculus of variations \cite{Zhik86,Zhik95,Marc89,Marc91}. The $(p,q)$-Laplacian is a differential operator with unbalanced growth and it provides models of strongly anisotropic materials \cite{Marc89,Zhik95}. Problems concerning the $(p,q)$-Laplacian come from reaction--diffusion systems of the form
\begin{align*}
\pt_t u=\mathrm{div}(B(u)\nabla u)+c(x,u),
\end{align*}
where the function $u$ represents the density or concentration of multicomponent substances, the differential term with $B(u)=|\nabla u|^{p-2}+|\nabla u|^{q-2}$ stands for the diffusion, and the term $c(x,u)$ describes the reaction relating to sources or loss processes. Meanwhile, it is worth noting that the Born--Infeld equation which arises in electromagnetism, electrostatics and electrodynamics, by employing the Taylor expansion, is dominated by the multiphase differential operator. Last, the $(p,q)$-Laplacian operator, as a special case of double phase operators, can be used to model physical phenomena like non-Newtonian fluids and composites with materials admitting different hardening exponents. (See \cite{BhBF25,BhBF26} for more background on the $(p,q)$-Laplacian operator.)

Recently Bhakta, Biswas and Filippucci \cite{BhBF26} proved Liouville-type theorems for the inequality
\begin{align*}
\laplace_{p} u+\laplace_{q} u+u^{s}\leq 0
\end{align*}
on $\mathbb{R}^{n}$ and its exterior domains. Related results for $(p,q)$-Laplacian equations on domains of $\R^n$ with gradient-dependent nonlinearities were discussed in \cite{BhBF25}. On Riemannian manifolds, concerning the $(p,q)$-Laplacian, Zhao \cite{Zhao26a} obtained Liouville-type results for
	\[ \laplace_{p} u + \laplace_{q} u + V(x)u^{s}\leq 0 \]
	under certain integral conditions on $V(x)$.	
	
Finally, anisotropic Liouville theorems for quasilinear inequalities are also a topic deserving investigation and were studied earlier by D'Ambrosio \cite{DAmb09}. For other Liouville-type theorems concerning harmonic functions on Finsler measure spaces, we refer to \cite{XiaC14, ZhXi14}. More recently, the second-named author \cite{Xion20} established the uniqueness of nonnegative solutions to certain elliptic inequalities on Finsler measure spaces, which is another attempt towards anisotropic Liouville theorems. For interested readers some background regarding geometric analysis on Finsler manifolds can be found in \cite{Ohta21,Ohta17} and references therein, and during the last two decades more and more attention was given to geometric analysis on Finsler manifolds (see, e.g., the monograph \cite{Ohta21} for an exposition). Which results on Riemannian manifolds can be extended to Finsler manifolds arises as a natural and interesting question to be explored. The main differences between Riemannian and Finsler manifolds include the nonlinearity of the gradient operator for functions and the nonsymmetry of the distance function in Finsler manifolds, which have to be carefully addressed.
	

\subsection{The parabolic case}\label{sec1.2}
For the parabolic problem, the studies, though fewer than those for the elliptic problem, have also attracted lots of attention. In the pioneering work \cite{Fuji66} Fujita studied the semilinear heat equation
\begin{align*}
\begin{cases}
&\pt_t u(x,t) =\laplace u(x,t)+ u^{s}(x,t), \quad (x,t)\in \R^{n}\times\R_+,\\
&u(x,0)=u_0(x),\quad x\in \R^n,
\end{cases}
\end{align*}
where $\R_+=(0,\infty)$. He discovered the critical exponent $s_{F}=1+2/n$. More precisely, if $1<s<s_{F}$, any nonnegative global solution must be zero (in other words, all nontrivial nonnegative solutions must blow up in finite time); if $s>s_F$, there exist global nontrivial nonnegative solutions for small enough initial data. The number $s=s_F$ was later known to belong to the blow-up case (see \cite{Haya73} for the cases $n=1$ and $2$, \cite{KoST77} for the general case $n\geq 1$, and also the survey paper \cite{Levi90} on the topic). A systematic investigation of Liouville problems for parabolic equations on Riemannian manifolds was carried out in \cite{Zhan99}. For further related results, we refer the reader to \cite{MaMP17, voGM25, Zhao26b}. In particular, in \cite{MaMP17} the authors consider the parabolic differential inequality
\begin{align*}
\begin{cases}
&\pt_t u(x,t)-\mathrm{div}(|\nabla u(x,t)|^{p-2}\nabla u(x,t))\geq V(x,t)u^s(x,t), \quad (x,t)\in M\times \R_+,\\
&u(x,0)=u_0(x), \quad x\in M,
\end{cases}
\end{align*}
where $M$ is a complete noncompact Riemannian manifold. Under certain hypotheses (similar to HP4 and HP5 below) they obtain Liouville-type results on the nonnegative weak solutions of the above parabolic inequality. In \cite{voGM25}, von Criegern, Grillo, and Monticelli derived Liouville-type results for quite general parabolic inequalities. Last, Zhao \cite{Zhao26b} studied the parabolic counterpart of his elliptic work \cite{Zhao26a}, namely, the case of the parabolic $(p,q)$-Laplacian inequalities. In addition, it is worth mentioning that an extension to hyperbolic space was handled by Bandle, Pozio, and Tesei \cite{BaPT11}.
	
\subsection{Our main results}	Motivated by the above works, in the present paper, we investigate Liouville problems for weak solutions of both elliptic and parabolic partial differential inequalities involving the Finsler $(p,q)$-Laplacian operators with a potential on Finsler measure spaces.
	
	Let $(M^n,F,m)$ be an $n$-dimensional ($n\geq 2$) forward geodesically complete noncompact Finsler manifold with finite reversibility $\finv<\infty$, endowed with a smooth positive measure $m$ (see Section~\ref{sec-pre-finsler} for notions on Finsler measure spaces used in this paper).  We first consider the elliptic partial differential inequality
	\begin{equation}\label{eq-main-PDE-(p,q)}
		\laplace^{m}_{p}u(x)+\laplace^{m}_{q}u(x)+V(x)u^{s}(x)\leq 0,\quad x\in M,
	\end{equation}
	where $p\geq q>1$, $s>p-1$, and the potential $V\in L^{1}_{\mathrm{loc}}(M)$ is positive almost everywhere. For $z\in\set{p,q}$ the Finsler $z$-Laplacian with respect to the measure $m$ is defined as
	\[ \laplace^{m}_{z}u:=\mdiv\sibb{F^{z-2}(\Fgrad u)\Fgrad u}, \]
	where $\Fgrad u$ is the Finsler gradient of the function $u$.
\begin{defn}
A nonnegative function $u\in W^{1,p}_{\mathrm{loc}}(M)$ is a weak solution of \eqref{eq-main-PDE-(p,q)} if for every nonnegative test function $\psi\in W^{1,p}(M)\intersect L^{\infty}(M)$ with compact support, the integral inequality
	\begin{equation}\label{eq-weakSol-PDE-(p,q)}
		\qint{M}{V u^{s}\psi}{m}\leq \qint{M}{F^{p-2}(\Fgrad u)\ud\psi(\Fgrad u)}{m}+\qint{M}{F^{q-2}(\Fgrad u)\ud\psi(\Fgrad u)}{m}
	\end{equation}
 is satisfied.
\end{defn}
	
	We also treat the associated parabolic counterpart,
	\begin{equation}\label{eq-main-PDE-(p,q)-parabolic}
		\case{&\pt_t u(x,t)\geq \laplace^{m}_{p}u(x,t)+\laplace^{m}_{q}u(x,t)+V(x,t)u^{s}(x,t),\quad  (x,t)\in M\times\R_{+},\\
			&u(x,0)=u_{0}(x),\quad x\in M,\\}
	\end{equation}
	where $p\geq q>1$,  $s>\max\set{1,p-1}$, the potential $V\in L^{1}_{\mathrm{loc}}(M\times\R_{+})$ is positive almost everywhere, and $u_0\in L^{1}_{\mathrm{loc}}(M)$ is nonnegative almost everywhere.
	Let $S=M\times\left[0, \infty \right)$.
\begin{defn}
A nonnegative function $u\in W^{1,p}_{\mathrm{loc}}(S)$ is a weak solution of \eqref{eq-main-PDE-(p,q)-parabolic} if for every nonnegative test function $\psi\in W^{1,p}(S)\intersect L^{\infty}(S)$ with compact support, it holds
	\begin{equation}\label{eq-weakSol-PDE-(p,q)-parabolic}
		\begin{aligned}
			&\int_{S} u\,\pt_t\psi \ud m \ud t + \qint{M}{u_{0}\, \psi(x,0)}{m} + \int_{S} V\, u^{s}\,\psi \ud m \ud t\\
			&\leq \int_{S}F^{p-2}(\Fgrad u)\ud\psi(\Fgrad u) \ud m \ud t + \int_{S}F^{q-2}(\Fgrad u)\ud\psi(\Fgrad u) \ud m \ud t.
		\end{aligned}
	\end{equation}
\end{defn}
	
	In the statements below, $r(x):=d(x_{0},x)$ denotes the Finsler distance from a fixed point $x_{0}\in M$ (see Section~\ref{sec-pre-finsler}), and $B_{R}=B_{R}^{+}(x_{0})$ denotes the forward geodesic open ball of radius $R$ with respect to $r$. We will prove that the solution $u$ to \eqref{eq-main-PDE-(p,q)} or \eqref{eq-main-PDE-(p,q)-parabolic} is identically zero almost everywhere provided the potential $V$ satisfies one of the growth conditions given below.
	
	To state our main results, we introduce certain critical exponents. For the elliptic problem and each $z\in\set{p,q}$, we set
	\[ s_{z}:=\frac{sz}{s-z+1}, \qquad \bar{k}_{z}:=\frac{z-1}{s-z+1}. \]
Then we impose the following assumptions.	
\begin{enumerate}
		\item[HP1] For each $z\in\set{p,q}$, there exist positive constants $C_0, C_1, \epsilon_0$ and an exponent $k_z \in [0, \bar{k}_z)$ such that for all sufficiently large $R$ and all $\epsilon \in (0, \epsilon_0)$,
		\[ \qint{\ball[R]\minus\ball[R/2]}{V^{-\bar{k}_{z}+\epsilon}}{m}\leq C_{1} R^{s_{z}+C_{0}\epsilon}\sibb{\ln R}^{k_{z}}. \]
		\item[HP2] For each $z\in\set{p,q}$, there exist positive constants $C_0, C_1, \epsilon_0$ such that for all sufficiently large $R$ and all $\epsilon \in (0, \epsilon_0)$,
		\[ \qint{\ball[R]\minus\ball[R/2]}{V^{-\bar{k}_{z}+\epsilon}}{m}\leq C_{1} R^{s_{z}+C_{0}\epsilon}\sibb{\ln R}^{\bar{k}_{z}},\qquad \qint{\ball[R]\minus\ball[R/2]}{V^{-\bar{k}_{z}-\epsilon}}{m}\leq C_{1} R^{s_{z}+C_{0}\epsilon}\sibb{\ln R}^{\bar{k}_{z}}. \]
		\item[HP3] For each $z\in\set{p,q}$, there exist positive constants $C_{0}, C_{1}, k, \theta, \epsilon_0$, and $\tau_z$ satisfying
		\[ \tau_z>\max\set{\frac{s-z+1}{s}(k+1), 1}, \]
		such that for all sufficiently large $R$ and all $\epsilon\in (0, \epsilon_0)$,
		\[ \qint{\ball[R]\minus\ball[R/2]}{V^{-\bar{k}_{z}+\epsilon}}{m}\leq C_{1} R^{s_{z}+C_{0}\epsilon}\sibb{\ln R}^{k}\ue^{-\epsilon\theta\sibb{\ln R}^{\tau_{z}}}. \]
	\end{enumerate}
\begin{rem}\label{rem1.1}
Our assumptions HP1--HP3 correspond to those in \cite{MaMP15} (see also \cite{Zhao26a}). We refer the interested readers to \cite{MaMP15} for the comparison, the sharpness and further comments on HP1--HP3. In particular, see Remark~1.3 in \cite{MaMP15} for examples of potential functions $V(x)$ satisfying one of HP1--HP3. Here we note that, in HP1 where $k_z$ is less than the critical value $\bar{k}_z$, one inequality is enough; while in HP2 where $k_z$ is relaxed to be $\bar{k}_z$, two inequalities have to be imposed. Moreover, in HP3, the exponent $k$ of $\ln R$ can be larger than $\bar{k}_z$, as long as there appears a compensating term involving $\tau_z$ (larger than $k$ in some way).
\end{rem}
	
	Our first main result is a Liouville-type theorem for the elliptic partial differential inequality \eqref{eq-main-PDE-(p,q)}.
	
	\begin{thm}\label{thm-main-elliptic}
		Let $(M,F,m)$ be a forward geodesically complete noncompact Finsler measure space with finite reversibility $\finv<\infty$. If $u$ is a nonnegative weak solution of \eqref{eq-main-PDE-(p,q)} and the potential $V$ satisfies one of the conditions HP1, HP2 and HP3, then
		\[ u(x)=0 \text{ a.e.\ in } M. \]
	\end{thm}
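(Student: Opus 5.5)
The proof will follow the nonlinear capacity method in the form used by Mastrolia--Monticelli--Punzo \cite{MaMP15} and Zhao \cite{Zhao26a}. The Finsler-specific points are the nonlinear gradient and the nonsymmetric distance. Both are handled through the reversibility bound $\finv<\infty$. It gives $F(-\xi)\le \finv F(\xi)$, and hence $|\ud\psi(\Fgrad u)|\le F^*(\ud\psi)F(\Fgrad u)$ with $F^*(\pm\ud r)\le \finv$ a.e. The engine of the proof is an a priori estimate. Let $\varphi\in C^\infty_c$ be a cutoff with $0\le\varphi\le1$, and let $\alpha<0$ be small with $|\alpha|$ tied to $\epsilon$. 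Testing \eqref{eq-weakSol-PDE-(p,q)} with $\psi=(u+\delta)^{\alpha}\varphi^{\gamma}$ (truncated, then $\delta\to0$) should give
\[
\int_M V u^{s+\alpha}\varphi^\gamma\,\ud m+\frac{|\alpha|}{2}\sum_{z\in\{p,q\}}\int_M F^z(\Fgrad u)u^{\alpha-1}\varphi^\gamma\,\ud m
\le C\sum_{z}|\alpha|^{-\frac{(z-1)(s+\alpha)}{s-z+1}}\Big(\int_M V^{-\frac{z-1+\alpha}{s-z+1}}F^{*}(\ud\varphi)^{\frac{z(s+\alpha)}{s-z+1}}\varphi^{\gamma-\cdots}\,\ud m\Big).
\]
This comes from Young's inequality applied to the term $\gamma\varphi^{\gamma-1}u^\alpha F^{z-2}(\Fgrad u)\ud\varphi(\Fgrad u)$. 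That term is split into a piece absorbed by the good gradient term and a piece $u^{\alpha-1+z}F^*(\ud\varphi)^z$. This last piece is then handled by a second Hölder/Young step against $Vu^{s+\alpha}$. This step is where $s>p-1\ge q-1$ is needed, and it produces exactly the exponents $\bar k_z-\epsilon$ and $s_z$ appearing in HP1--HP3. A second estimate, obtained by testing with $\psi=\varphi^\gamma$, bounds $\int Vu^s\varphi^\gamma$ by gradient terms supported on $\operatorname{supp}\ud\varphi$. Hölder's inequality then bounds those gradient terms using the first estimate and $V^{-\bar k_z\pm\epsilon}$ integrals over the annulus. The two $z$-terms are treated separately and summed. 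Because $p\ne q$ in general, the argument cannot use a single homogeneity; this is the structural reason why HP1--HP3 are assumed for both $z=p$ and $z=q$.

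Next comes the choice of cutoffs and parameters. Under HP1 I will take $\varphi$ to equal $1$ on $B_R$, be supported in $B_{2R}$, and satisfy $F^*(\ud\varphi)\le C/R$ (built from $r$; forward completeness makes the balls precompact). I will then sum over the dyadic annuli $B_{2^{j}R}\setminus B_{2^{j-1}R}$ and choose $\epsilon\sim 1/\ln R$. With this choice $R^{C_0\epsilon}$ stays bounded, and the factor $|\alpha|^{-\cdots}\sim(\ln R)^{\bar k_z}$ competes with $(\ln R)^{k_z}$. The annulus contributions are $R^{-s_z}\cdot R^{s_z}(\ln R)^{k_z}$. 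Because $k_z<\bar k_z$, the right-hand side of the first estimate, multiplied by the appropriate power of $|\alpha|$ coming from Hölder, tends to $0$ as $R\to\infty$. Hence $\int_M Vu^s\,\ud m=0$, and since $V>0$ a.e., $u=0$ a.e. Under HP2 the borderline case $k_z=\bar k_z$ only gives boundedness: first $\int_M Vu^s<\infty$. I will then redo the second estimate with the right-hand side localized to the annulus $B_{2R}\setminus B_R$. The $V^{-\bar k_z-\epsilon}$ integral is used for this on the other side of the Hölder step. The annular integral of $Vu^s$ tends to $0$ by absolute continuity, which forces the limit to be zero. Under HP3 I will instead use logarithmic cutoffs, $\varphi=1$ on $B_R$ and supported in $B_{R^2}$, with $F^*(\ud\varphi)\le C/(r\ln R)$. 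I will take $\epsilon=(\ln R)^{-\sigma}$ with $\sigma$ chosen so that the exponential factor $e^{-\epsilon\theta(\ln R)^{\tau_z}}$ dominates the powers $(\ln R)^{k}$ and $\epsilon^{-\cdots}$; the condition $\tau_z>\max\{\tfrac{s-z+1}{s}(k+1),1\}$ is precisely what makes such a $\sigma$ exist.

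The main obstacle is the first step: deriving the $\alpha$-estimate with explicit dependence on $|\alpha|$, uniformly in both $z$. This requires justifying $(u+\delta)^\alpha\varphi^\gamma$ as an admissible test function in $W^{1,p}\cap L^\infty$ with $u\in W^{1,p}_{\mathrm{loc}}$. It also requires carrying the reversibility constant through every Cauchy--Schwarz step, since $\ud\varphi(\Fgrad u)$ can be negative and $F^*(-\ud\varphi)\neq F^*(\ud\varphi)$. I expect the bookkeeping of exponents, namely tracking how $\epsilon$, $\alpha$ and $C_0\epsilon$ interact so that the final powers of $\ln R$ compare exactly as stated in HP1--HP3, to be the most delicate part of the argument.
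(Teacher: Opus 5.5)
Your two-estimate architecture matches the paper's: a Caccioppoli estimate with $\psi=u^{-a}\varphi^{b}$, a H\"older estimate with $\psi=\varphi^{b}$, $a=1/\ln R$, and for HP2 finiteness of $\int_M Vu^s$ followed by a vanishing tail. However, the cutoff you use for HP1 and HP2 cannot close the argument.

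Take $\varphi\equiv1$ on $B_R$, $\operatorname{supp}\varphi\subset B_{2R}$ and $F^{*}(\mathrm{d}\varphi)\le C/R$. Then HP1 with $\epsilon=a/(s-z+1)$ and $R^{a}=e$ bounds the $z$-term of your first estimate only by
\[
a^{-\frac{(z-1)(s-a)}{s-z+1}}\Big(\frac{C}{R}\Big)^{\frac{z(s-a)}{s-z+1}}\int_{B_{2R}\setminus B_R}V^{-\bar k_z+\epsilon}\,\mathrm{d}m\le C\,a^{-\frac{(z-1)(s-a)}{s-z+1}}(\ln R)^{k_z}\approx C(\ln R)^{s\bar k_z+k_z},
\]
and this bound tends to infinity. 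The Young factor is $(\ln R)^{\bar k_z(s-a)}$, not $(\ln R)^{\bar k_z}$. It multiplies $(\ln R)^{k_z}$ rather than competing with it, and a cutoff living on a single annulus supplies no compensating small factor. (Your ``summing over dyadic annuli'' has no content for a cutoff supported in $B_{2R}$.) H\"older's inequality cannot rescue HP1 either: passing from $u^{(a+1)(z-1)}$ to $Vu^{s}$ requires control of $\int V^{-\bar k_z-\epsilon}$, which HP1 does not give. In the HP2 step the same cutoff makes $(a^{-1}Q)^{\frac{z-1}{z}}J_z^{\frac{s-(a+1)(z-1)}{sz}}$ grow like a positive power of $\ln R$. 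So you obtain neither $\int_M Vu^s<\infty$ nor the final vanishing.

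The missing idea is that the cutoff's gradient must itself carry the small parameter $a$. Its support must also be long enough that the tail integral costs only $a^{-1-k_z}$.

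The paper takes $\phi=1$ on $B_R$ and $\phi=(r/R)^{-Aa}$ outside, truncated by $\eta_j$ with $j\to\infty$. Then $F(\nabla\phi)\le\Lambda_F Aa\,r^{-1}(r/R)^{-Aa}$ contributes $a^{\frac{z(s-a)}{s-z+1}}$. Lemma~\ref{lem-inequality-decom-elliptic} together with the substitution $t=\alpha\ln r$, $\alpha>a$, costs only $a^{-1-k_z}$. The net power is $a^{\frac{s-a}{s-z+1}-1-k_z}=a^{\bar k_z-k_z-\frac{a}{s-z+1}}\to0$.

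Your own HP3 cutoff $\varphi=\ln(R^2/r)/\ln R$ on $B_{R^2}\setminus B_R$ does the same job:
\begin{itemize}
\item[(i)] $F^{*}(\mathrm{d}\varphi)\le\Lambda_F/(r\ln R)$ gives the factor $a^{\frac{z(s-a)}{s-z+1}}$;
\item[(ii)] the roughly $\log_2 R$ dyadic annuli give $(\ln R)^{1+k_z}$;
\item[(iii)] $r^{Ca}\le e^{2C}$ on $B_{R^2}$.
\end{itemize}
With this cutoff HP2 also balances exactly: $\frac{z-1}{z}+(1+\bar k_z-s_z)\frac{s-z+1}{sz}=0$, so $(a^{-1}Q)^{\frac{z-1}{z}}J_z^{\frac{s-(a+1)(z-1)}{sz}}$ stays bounded. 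Using that cutoff, or the paper's power tail, in all three cases repairs the proof. As written, the HP1 and HP2 cases do not go through.
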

\begin{rem}
When $(M,F,m)$ is a Riemannian manifold, Theorem~\ref{thm-main-elliptic} reduces to the result in \cite{Zhao26a}, which further covers the $p=q$ case in \cite{MaMP15}. On the other hand, when $p=q$, Theorem~\ref{thm-main-elliptic} reduces to some of results in \cite{Xion20}.
\end{rem}
\begin{rem}
Theorem~\ref{thm-main-elliptic} includes the result on Euclidean space $\R^n$ with an anisotropic function $H$, since $(\R^n, H^*,\lambda)$ is a special Finsler measure space. (A similar remark applies to Theorem~\ref{thm-main-parabolic} below.) Here $H^*$ denotes the dual function of $H$ and $\lambda$ the Lebesgue measure on $\R^n$. The problems on $(\R^n, H^*,\lambda)$ themselves form an interesting topic in geometric analysis; see, e.g., Xia's thesis \cite{XiaC12} for an introduction.
\end{rem}
	
Concerning the parabolic problem, for parameters $\theta_1\geq 1$, $\theta_2\geq 1$ and $R>0$, we introduce the space-time domains
	\[ E_{R,\theta_1,\theta_2}:=\set{(x,t)\in S : t^{\theta_1}+r(x)^{\theta_2}< R^{\theta_2}}. \]
For simplicity of notations, we also use $E_R$ to denote $E_{R,\theta_1,\theta_2}$ whenever no confusion arises.	The relevant parabolic critical indices are
	\[ \bar{s}_{1} :=\frac{s}{s-1}\theta_2,\qquad \bar{s}_{2}:=\frac{1}{s-1},\qquad \bar{l}_{z}:=s_{z}\theta_2=\frac{sz}{s-z+1}\theta_2, \]
	while $\bar{k}_{z}$ remains unchanged.	For the parabolic inequality, we impose the following assumptions.
	
	\begin{enumerate}
		\item[HP4] There exist positive constants $\theta_1\geq 1$, $\theta_2 \geq 1$, $C_0$, $C_1$, $\epsilon_0$ and exponents $s_2 \in [0, \bar{s}_2)$, $k_{z} \in [0, \bar{k}_z)$ such that for all sufficiently large $R$ and all $\epsilon \in (0, \epsilon_0)$,
		\[ \int_{E_{2^{1/\theta_2}R}\minus E_R} t^{(\theta_1-1)(\frac{s}{s-1}-\epsilon)} V^{-\bar{s}_2+\epsilon} \ud m \ud t \leq C_1 R^{\bar{s}_1+C_0\epsilon} \sibb{\ln R}^{s_2}. \]
		For each $z \in \set{p,q}$,
		\[ \int_{E_{2^{1/\theta_2}R}\minus E_R}
		r(x)^{(\theta_2-1)z\sibb{\frac{s}{s-z+1}-\epsilon}} V^{-\bar{k}_z+\epsilon} \ud m \ud t
		\leq C_1 R^{\bar{l}_{z}+C_0\epsilon} \sibb{\ln R}^{k_{z}}. \]
		\item[HP5] There exist positive constants $\theta_1\geq 1$, $\theta_2 \geq 1$, $C_0$, $C_1$, $\epsilon_0$
		such that for all sufficiently large $R$ and all $\epsilon \in (0, \epsilon_0)$,
		\[ \int_{E_{2^{1/\theta_2}R}\minus E_R} t^{(\theta_1-1)(\frac{s}{s-1}-\epsilon)} V^{-\bar{s}_2+\epsilon} \ud m \ud t \leq C_1 R^{\bar{s}_1+C_0\epsilon} \sibb{\ln R}^{\bar{s}_2}. \]
		For each $z \in \set{p,q}$,
		\[ \int_{E_{2^{1/\theta_2}R}\minus E_R} r(x)^{(\theta_2-1)z\sibb{\frac{s}{s-z+1}-\epsilon}} V^{-\bar{k}_z+\epsilon} \ud m \ud t \leq C_1 R^{\bar{l}_{z}+C_0\epsilon} \sibb{\ln R}^{\bar{k}_z}, \]
		\[ \int_{E_{2^{1/\theta_2}R}\minus E_R}
		r(x)^{(\theta_2-1)z\sibb{\frac{s}{s-z+1}+\epsilon}} V^{-\bar{k}_z-\epsilon} \ud m \ud t
		\leq C_1 R^{\bar{l}_{z}+C_0\epsilon} \sibb{\ln R}^{\bar{k}_z}. \]
	\end{enumerate}
\begin{rem}
Our assumptions HP4 and HP5 correspond to those in \cite{MaMP17} (see also \cite{Zhao26b,voGM25}). As mentioned in \cite{MaMP17}, HP4 and HP5 are the counterparts of HP1 and HP2 above, respectively; while there is no known parabolic counterpart of the elliptic assumption HP3. We also note that, in HP4 the indices $s_2$ and $k_z$ are less than the critical ones; while in HP5 the indices are precisely the critical ones and so more inequalities as assumptions are needed. Besides, see Corollaries~5 and 6 in \cite{MaMP17} for examples of potential functions $V(x,t)$ satisfying HP4 and HP5, respectively.
\end{rem}
	
	Our second main result treats the parabolic partial differential inequality \eqref{eq-main-PDE-(p,q)-parabolic}.
	
	\begin{thm}\label{thm-main-parabolic}
		Let $(M,F,m)$ be a forward geodesically complete noncompact Finsler measure space with finite reversibility $\finv<\infty$. If $u$ is a nonnegative weak solution of \eqref{eq-main-PDE-(p,q)-parabolic} and the potential $V$ satisfies either HP4 or HP5, then
		\[ u(x,t)=0  \text{ a.e. in } M\times [0,\infty). \]
	\end{thm}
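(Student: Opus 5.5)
We follow the nonlinear capacity method of \cite{MaMP17,Zhao26b}, adapted to the Finsler setting as in the elliptic case. The basic a priori estimate comes from testing \eqref{eq-weakSol-PDE-(p,q)-parabolic} with $\psi=u_\delta^{-\alpha}\varphi^\beta$. Here $u_\delta=u+\delta$ with $\delta>0$, the exponent $\alpha\in(0,\min\{1,q-1\})$ will be small (eventually $\alpha\sim\epsilon$), and $\beta$ is large. The cutoff is $\varphi(x,t)=\eta\big((t^{\theta_1}+r(x)^{\theta_2})/R^{\theta_2}\big)$, where $\eta$ is smooth, nonincreasing, equal to $1$ on $[0,1]$ and vanishing on $[2,\infty)$. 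Thus $\varphi$ is supported in $E_{2^{1/\theta_2}R}$ and is constant on $E_R$. Since $r$ is only Lipschitz, forward completeness and $\finv<\infty$ give $F(\grad r)=1$ a.e. and $F(-\grad r)\le \finv$, so $F(\pm\grad\varphi)\le C\finv R^{-\theta_2}r^{\theta_2-1}|\eta'|$. We also have $|\pt_t\varphi|\le CR^{-\theta_2}t^{\theta_1-1}|\eta'|$.

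The key Finsler identity is
\[ \ud\psi(\grad u)=-\alpha u_\delta^{-\alpha-1}\varphi^\beta F^2(\grad u)+\beta u_\delta^{-\alpha}\varphi^{\beta-1}\ud\varphi(\grad u), \]
together with $|\ud\varphi(\grad u)|\le F(\grad u)\max\{F(\grad\varphi),F(-\grad\varphi)\}$. This bound is where the reversibility constant enters and replaces Cauchy--Schwarz. For the time term, $u\pt_t\psi$ integrates to $\frac{1}{1-\alpha}\int u_\delta^{1-\alpha}\pt_t(\varphi^\beta)$, up to a harmless term from $\delta$. One point needs care: $u$ is only $W^{1,p}_{\mathrm{loc}}$, so the chain rule in $t$ requires Steklov averaging, or a direct justification as in \cite{MaMP17}. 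The initial-data term is nonnegative and is dropped. Young's inequality then absorbs the gradient terms into the good term $\alpha\int u_\delta^{-\alpha-1}\varphi^\beta F^z(\grad u)$ for $z=p,q$. Letting $\delta\to0$ gives
\[ \int V u^{s-\alpha}\varphi^\beta+\sum_z\alpha\int u^{-\alpha-1}F^z(\grad u)\varphi^\beta \le C\int u^{1-\alpha}|\pt_t\varphi^\beta| + C\sum_z \alpha^{1-z}\int u^{z-1-\alpha}\varphi^{\beta-z}F^z(\grad\varphi). \]

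Next we apply Young's inequality with weight $V$ to each right-hand term, using exponents $\frac{s-\alpha}{1-\alpha}$ and $\frac{s-\alpha}{z-1-\alpha}$. This step is where $s>\max\{1,p-1\}$ is used. The resulting integrals are of the form $\int V^{-\frac{1-\alpha}{s-1}}|\pt_t\varphi|^{\frac{s-\alpha}{s-1}}$ and $\int V^{-\frac{z-1-\alpha}{s-z+1}}F^{\frac{z(s-\alpha)}{s-z+1}}(\grad\varphi)$. Writing $\alpha\approx\epsilon$, these exponents are exactly $V^{-\bar s_2+\epsilon'}$, $V^{-\bar k_z+\epsilon'}$, with time weights $t^{(\theta_1-1)(\frac{s}{s-1}-\epsilon')}$ and spatial weights $r^{(\theta_2-1)z(\frac{s}{s-z+1}-\epsilon')}$. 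This matches HP4 and HP5. Combined with the factors $R^{-\theta_2\cdot(\text{exponent})}$, one obtains
\[ \int_{E_R}Vu^{s-\alpha}\le C\Big(\epsilon^{-\frac{s}{s-1}}R^{C\epsilon}(\ln R)^{s_2}+\sum_z\epsilon^{-\frac{s}{s-z+1}}R^{C\epsilon}(\ln R)^{k_z}\Big). \]
The $\alpha^{1-z}$ factors account for the negative powers of $\epsilon$. Under HP4 we choose $\epsilon=1/\ln R$. The right side is then $O\big((\ln R)^{\max\{s_2-\bar s_2,\,k_z-\bar k_z\}}\big)$ up to bounded factors, using $\frac{s}{s-1}=1+\bar s_2$ and $\frac{s}{s-z+1}=1+\bar k_z$. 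One power is compensated exactly as in \cite{MaMP17}, by combining with a Hölder estimate of the right side over the annulus in terms of $\int_{E_{2R}\setminus E_R}Vu^{s-\alpha}$ itself. Letting $R\to\infty$ then shows $\int_S Vu^{s}=0$, or, via Fatou as $\alpha\to0$, that $u=0$ where $V>0$; since $V>0$ a.e., $u=0$ a.e.

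The main obstacle is the critical case HP5, where the logarithmic powers only yield boundedness. There we plan to follow \cite{MaMP17,Zhao26b}. First, boundedness gives $\int_S Vu^{s-\alpha}<\infty$ uniformly. Then we repeat the estimate with Hölder in place of Young, so that the right side carries a factor $\big(\int_{E_{2^{1/\theta_2}R}\setminus E_R}Vu^{s-\alpha}\big)^{\gamma}$ with $\gamma>0$. This factor tends to $0$ as a tail of a convergent integral. The second, $-\epsilon$, inequality in HP5 is needed to control the integrals with exponent $-\bar k_z-\epsilon$. These arise from the good gradient term when we test with $\alpha<0$, i.e. $\psi=u_\delta^{\alpha}\varphi^\beta$ with small positive power. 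This is needed to pass from $u^{s-\alpha}$ back to $u^{s}$ and to treat the gradient cross terms. Tracking the $\epsilon$-dependence of all constants uniformly in $z\in\{p,q\}$, together with handling the two different homogeneities $p\ne q$ simultaneously, is the delicate bookkeeping. The Finsler-specific difficulty is confined to the nonsymmetric estimate on $\ud\varphi(\grad u)$ via $\finv$.
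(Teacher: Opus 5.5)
Your HP4 argument does not close. The failure is structural, not a matter of bookkeeping. With the single-scale cutoff $\varphi=\eta\bigl((t^{\theta_1}+r^{\theta_2})/R^{\theta_2}\bigr)$, the derivatives $F(\nabla\varphi)\lesssim R^{-\theta_2}r^{\theta_2-1}$ and $|\partial_t\varphi|\lesssim R^{-\theta_2}t^{\theta_1-1}$ carry no small factor, and HP4 is used only on the one annulus $E_{2^{1/\theta_2}R}\setminus E_R$. After your second Young step, the time term is therefore bounded only by $CR^{C\alpha}(\ln R)^{s_2}$, which is never better than $(\ln R)^{s_2}$. Each $z$-term is bounded by $C\alpha^{-\frac{(z-1)(s-\alpha)}{s-z+1}}R^{C\alpha}(\ln R)^{k_z}$. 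Minimizing $\alpha^{-c}R^{C\alpha}$ over $\alpha$ gives at best $(\ln R)^{c}$ with $c>0$, so for every choice of $\alpha$ your bound on $\int_{E_R}Vu^{s-\alpha}$ grows with $R$. Your own display confirms this: for $\epsilon=1/\ln R$ one has $\epsilon^{-\frac{s}{s-1}}(\ln R)^{s_2}=(\ln R)^{1+\bar s_2+s_2}$, not $(\ln R)^{s_2-\bar s_2}$, so the sign of the $\epsilon$-exponent has been reversed. A H\"older estimate against $\int_{E_{2R}\setminus E_R}Vu^{s-\alpha}$ cannot rescue a bound that is not even uniformly bounded. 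In the paper the H\"older step appears only in the critical case HP5, after boundedness has been established by other means. Your HP5 plan presupposes exactly that boundedness, so it inherits the same defect.

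The missing idea is the Grigor'yan--Sun type cutoff with a slowly decaying tail. Take $\phi=1$ on $E_R$ and $\phi=\bigl((r^{\theta_2}+t^{\theta_1})/R^{\theta_2}\bigr)^{-Aa}$ outside, with $a=1/\ln R$ (so $R^{a}=\mathrm{e}$) and $A$ large. Multiply it by a far cutoff $\eta_j$ at scale $jR$, which is removed by letting $j\to\infty$. Now $F(\nabla\phi)$ and $|\partial_t\phi|$ carry a factor $a$, producing prefactors $a^{\frac{z(s-a)}{s-z+1}}$ and $a^{\frac{s-a}{s-1}}$. Summing HP4 over all dyadic annuli outside $E_R$ (Lemma~\ref{lem-inequality-decom-HP4}) gives $\int_{R/2^{1/\theta_2}}^{\infty}r^{-1-\kappa}(\ln r)^{k_z}\,\mathrm{d}r\lesssim\kappa^{-1-k_z}$, with decay rate $\kappa\ge ca$; this is where $A$ large is needed. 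Combined with the Young constant $a^{-\frac{(z-1)(s-a)}{s-z+1}}$, this yields $a^{\bar k_z-k_z-\frac{a}{s-z+1}}$ for the $z$-terms and $a^{\bar s_2-s_2-\frac{a}{s-1}}$ for the time term. These tend to $0$ precisely because $k_z<\bar k_z$ and $s_2<\bar s_2$. The $\eta_j$-contributions are $O\bigl(j^{-ca}(\ln(jR))^{k}\bigr)$ for fixed $R$ and vanish as $j\to\infty$.

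The same cutoff gives boundedness under HP5 through a H\"older-type estimate. Test with $\phi_j^{b}$, split $F^{z-1}(\nabla u)$ against the Caccioppoli quantity $u^{-a-1}F^{z}(\nabla u)\phi_j^{b}$, and apply H\"older against $Vu^{s}$. The weight $V^{-\bar k_z-\epsilon}$ arises there as $V^{-\frac{(a+1)(z-1)}{s-(a+1)(z-1)}}$. It does not come from testing with a positive power $u^{|\alpha|}\varphi^{\beta}$, as you propose. That test function puts $+|\alpha|\int u^{|\alpha|-1}F^{z}(\nabla u)\varphi^{\beta}$ on the right-hand side, so it gives no gradient control for a supersolution.
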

\begin{rem}
When $(M,F,m)$ is a Riemannian manifold, Theorem~\ref{thm-main-parabolic} reduces to the result in \cite{Zhao26b}, which generalizes the $p=q$ case in \cite{MaMP17}.
\end{rem}

For the proofs of Theorems~\ref{thm-main-elliptic} and \ref{thm-main-parabolic}, we mainly follow the approaches in \cite{MaMP15} (see also \cite{GrSu14,Xion20,Zhao26a}) and \cite{MaMP17} (see also \cite{voGM25,Zhao26b}), respectively. These approaches are essentially based on the nonlinear capacity method by Mitidieri and Pohozaev \cite{MiPo98,MiPo01} mentioned before. Simply speaking, all the proofs consist of two steps. In Step one, we substitute carefully-chosen two-parameter test functions to the weak formulation of the solutions to the elliptic or parabolic inequalities, and derive estimates on the solutions from above in terms of certain quite arbitrary functions. Then in Step two, by choosing appropriate cut-off functions as the above arbitrary functions, we are able to conclude that the weighted (by the potential $V$) integral of some power of the solutions vanishes, which will complete the proofs.
	
	The paper is organized as follows. In Section~\ref{sec-pre}, we collect the necessary background. In Section~\ref{sec-pre-finsler}, we recall the basic concepts and tools of Finsler geometry. In Sections~\ref{sec-pre-inte-elliptic} and \ref{sec-pre-inte-parabolic}, we establish the fundamental integral estimates for weak solutions of the elliptic and parabolic problems, respectively. In	Section~\ref{sec-proof of elliptic} we give the proof of Theorem~\ref{thm-main-elliptic} under one of the three hypotheses HP1--HP3, and in Section~\ref{sec-proof of parabolic} we prove Theorem~\ref{thm-main-parabolic} under the hypothesis HP4 or HP5.
	
	Throughout the paper, the symbol $C$ denotes a generic positive constant depending only on the structural parameters $p$, $q$, $s$, $C_{0}$, $C_{1}$, $k$, $\theta$, $\tau_z$, $\theta_{1}$ and $\theta_{2}$, the Finsler structure $F$, the reversibility constant $\finv$, and the underlying measure $m$. Its value may change from line to line, but it is always independent of the solution $u$ and the variables $x$ and $t$. In addition, whenever we apply Young's inequality, we use the form $ab \le a^p + b^q$ (with $1/p+1/q=1$) instead of the standard one $ab \le a^p/p + b^q/q$.
	
\section{Preliminaries}\label{sec-pre}

In this section we first review some fundamentals on the Finsler measure spaces, and then derive some integral estimates for the solutions to the elliptic and parabolic partial differential inequalities.
\subsection{Finsler Geometry}\label{sec-pre-finsler}
	We recall the basic notions of Finsler geometry that will be used throughout the paper. A thorough treatment can be found in \cite{Ohta21,BaCS00}.
	
	Let $M^n$ be a smooth connected $n$-dimensional ($n\geq 2$) manifold. A Finsler structure on $M$ is a continuous function $F: TM \to [0,\infty)$ that is $C^{\infty}$ on $TM\minus\set{0}$, satisfies $F(x,\lambda V)=\lambda F(x,V)$ for all $\lambda>0$ and $(x,V)\in TM$, and is such that for every non-zero $V\in T_xM$ the fundamental tensor $g_V$ defined by
	\[ g_V(X,Y) = \frac{1}{2}\frac{\pt^2}{\pt s\pt t}\Big|_{s=t=0} F^{2}(x,V+sX+tY), \quad X,Y\in T_xM, \]
	is positive definite.
	The dual Finsler norm on the cotangent bundle is
	\[ F^{*}(x,\xi) = \sup_{V\in T_xM\minus\set{0}} \frac{\xi(V)}{F(x,V)}, \quad \xi\in T^*_xM. \]
	We write $F(V)$ and $F^{*}(\xi)$ when the base point is clear from the context.
	
	The Legendre transformation $\legen: TM \to T^{*}M$ is
	\[ \legen(V)=\case{&g_{V}(V,\cdot), & V \neq 0,\\ &0, & V=0.} \]
	It is a diffeomorphism from $TM\minus\set{0}$ onto $T^{*}M\minus\set{0}$. For a smooth function $\map{u}{M}{\R}$, its Finsler gradient is
	$\Fgrad u = \legen^{-1}(du)$.
	Equivalently, $g_{\Fgrad u}(\Fgrad u, Y)=\ud u(Y)$ for all $Y\in TM$ when $\nabla u\neq 0$. Here we note that the gradient operator is nonlinear, which is different from the Riemannian case.
	
	From the definitions, we have the useful identities
	\[ F(\Fgrad u) = F^{*}(\ud u), \qquad g_{\Fgrad u}(\Fgrad u,\Fgrad u) = F^{2}(\Fgrad u),\quad \text{when } \Fgrad u\neq 0. \]
	Moreover, the following Cauchy--Schwarz inequality (called the fundamental inequality in \cite{BaCS00,Ohta21}) holds for all smooth $u,\phi$:
	\[ \ud\phi(\Fgrad u) \leq F(\Fgrad u)\,F(\Fgrad \phi). \]
	Indeed, if $\Fgrad\phi = 0$, both sides are zero.
	Otherwise, we have
	\[ \ud\phi(\Fgrad u) = g_{\Fgrad\phi}(\Fgrad\phi,\Fgrad u) \leq F(\Fgrad u)\,F(\Fgrad \phi). \]
	The equality is attained if and only if $\Fgrad\phi$ is a nonnegative scalar multiple of $\Fgrad u$.
	
	The length of a piecewise smooth curve $\eta:[0,l]\to M$ is
	\[ L(\eta)=\iint{0}{l}{F(\eta(t),\dot{\eta}(t))}{t}, \]
	and the Finsler distance is $d(x,y) = \inf_\eta L(\eta)$, the infimum taken over all such curves with $\eta(0)=x$, $\eta(l)=y$. Note that generally $d(x,y)\neq d(y,x)$. We assume $(M,F)$ is forward geodesically complete. By the Hopf--Rinow theorem, any two points can be joined by a minimizing geodesic.
	For a fixed $x_0\in M$ we set $r(x) = d(x_0,x)$ and let $B_R = B_R^{+}(x_0)=\set{x\in M : r(x)<R}$ be the forward open geodesic ball. It is easy to check that $F^{*}(dr) = 1$ almost everywhere.
	
	The reversibility of $(M,F)$ is
	\[ \finv:=\sup_{(x,V)\in TM\minus\set{0}}\frac{F(x,V)}{F(x,-V)} \in [1,\infty]. \]
	We assume $\finv<\infty$. Then also $\Lambda_{F^{*}} = \finv$.
	
	Let $m$ be a fixed positive $C^{\infty}$ measure on $M$.
	In local coordinates, let $\ud m = \omega(x)\ud x^1\cdots \ud x^n$. For a smooth vector field $X$, its divergence with respect to $m$ is
	\[ \mdiv X=\frac{1}{\omega}\frac{\pt}{\pt x^{i}}\sibb{\omega X^{i}}. \]
	Then the divergence theorem on $\sibb{M,F,m}$ states that for any smooth vector field $X$ and any $\phi\in C^{\infty}_{c}(M)$, we have
	\[ \qint{M}{\mdiv(X)\phi}{m}=-\qint{M}{\ud \phi(X)}{m}. \]
	
	Because $F$ and any Hilbert norm on $TM$ are locally equivalent, the spaces
	$W^{1,p}_{\mathrm{loc}}(M,m)$ and $L^{p}_{\mathrm{loc}}(M,m)$ are independent of the choices of $F$ and $m$,
	and standard Sobolev embedding theorems apply on coordinate charts.
	
\subsection{Integral estimates for the elliptic inequality}\label{sec-pre-inte-elliptic}
	In this subsection, we establish the key integral estimates for weak solutions of the elliptic inequality \eqref{eq-main-PDE-(p,q)}. Throughout, we write $r(x)=d(x_0,x)$ for the distance from a fixed reference point $x_0\in M$. We begin with a Caccioppoli-type inequality, which provides a fundamental control on the gradient of the solution and the zeroth-order term $Vu^s$ (or its variant $Vu^{s-a}$).
	
	\begin{lem}\label{lem-integralIdentity-cutFunction-young}
		Let $u$ be a nonnegative weak solution of \eqref{eq-main-PDE-(p,q)}. For any positive constants $a$, $b$ satisfying
		\[ 0<a<\min\set{\frac{1}{2}, q-1}, \quad b>\frac{ps}{s-p+1}, \]
		there exists a constant $C(b)> 0$ such that for every Lipschitz function $\phi$ with compact support and $0\leq\phi\leq 1$, the following estimate holds
		\[ \begin{aligned}
			&a\qint{M}{F^{p}(\Fgrad u) u^{-1-a}\phi^{b}}{m}+a\qint{M}{F^{q}(\Fgrad u)u^{-1-a} \phi^{b}}{m}+\qint{M}{V u^{s-a} \phi^{b}}{m}\\
			&\leq C(b)a^{-\frac{(p-1)(s-a)}{s-p+1}}\qint{M}{V^{-\frac{p-a-1}{s-p+1}} F^{\frac{p(s-a)}{s-p+1}}(\Fgrad \phi)}{m}
			+ C(b)a^{-\frac{(q-1)(s-a)}{s-q+1}}\qint{M}{V^{-\frac{q-a-1}{s-q+1}}F^{\frac{q(s-a)}{s-q+1}}(\Fgrad \phi)}{m}.
		\end{aligned} \]
		More explicitly, one may take
		\[ C(b)=\sup_{a\in (0,\min\set{1/2, q-1})}\max\set{b^{\frac{p(s-a)}{s-p+1}}2^{\frac{p(1+s)-a(1+p)-1}{s-p+1}}, b^{\frac{q(s-a)}{s-q+1}}2^{\frac{q(1+s)-a(1+q)-1}{s-q+1}}}. \]
	\end{lem}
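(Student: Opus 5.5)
The plan is the standard Mitidieri--Pohozaev/Grigor'yan--Sun test-function argument. We test the weak formulation \eqref{eq-weakSol-PDE-(p,q)} with
\[ \psi=(u+\delta)^{-a}\phi^{b},\qquad \delta>0, \]
and let $\delta\to0$ at the end. This $\psi$ is nonnegative, bounded by $\delta^{-a}$, compactly supported and in $W^{1,p}$, so it is admissible. Working with $u_\delta=u+\delta$ (so $\Fgrad u_\delta=\Fgrad u$), the differential is
\[ \ud\psi=-a u_\delta^{-1-a}\phi^b\,\ud u+b\,u_\delta^{-a}\phi^{b-1}\,\ud\phi. \]
Since $\ud u(\Fgrad u)=F^2(\Fgrad u)$, for each $z\in\{p,q\}$ we obtain
\[ \int F^{z-2}(\Fgrad u)\,\ud\psi(\Fgrad u)=-a\int F^{z}(\Fgrad u)u_\delta^{-1-a}\phi^b+b\int F^{z-2}(\Fgrad u)u_\delta^{-a}\phi^{b-1}\,\ud\phi(\Fgrad u). \]
Moving the negative terms to the left gives $a\int F^pu_\delta^{-1-a}\phi^b+a\int F^qu_\delta^{-1-a}\phi^b+\int Vu^su_\delta^{-a}\phi^b$ on the left, bounded by the two cross terms. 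In the cross terms we use the fundamental inequality $\ud\phi(\Fgrad u)\le F(\Fgrad u)F(\Fgrad\phi)$; this is the only Finsler-specific point, and the nonlinearity of $\Fgrad$ causes no trouble here because $\phi$ enters only through $\ud\phi$.

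Next, each cross term $b\int F^{z-1}(\Fgrad u)F(\Fgrad\phi)u_\delta^{-a}\phi^{b-1}$ is split by Young's inequality with exponents $z/(z-1)$ and $z$, so as to absorb half of $a\int F^zu_\delta^{-1-a}\phi^b$:
\[ b F^{z-1}u_\delta^{-a}\phi^{b-1}F(\Fgrad\phi)\le \tfrac a2F^zu_\delta^{-1-a}\phi^b+C\,a^{-(z-1)}b^z\,u_\delta^{z-1-a}\phi^{b-z}F^z(\Fgrad\phi). \]
After absorption (the constant $a$ on the left becomes $a/2$, which is harmless), the remaining terms are of the form $a^{1-z}b^z\int u_\delta^{z-1-a}\phi^{b-z}F^z(\Fgrad\phi)$.

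We then apply Young's inequality a second time, with exponents $\frac{s-a}{z-1-a}$ and $\frac{s-a}{s-z+1}$; this requires $a<q-1\le z-1$ and $s>p-1$. Writing
\[ u^{z-1-a}\phi^{b-z}F^z=\bigl(\varepsilon Vu^{s-a}\phi^b\bigr)^{\frac{z-1-a}{s-a}}\cdot\varepsilon^{-\frac{z-1-a}{s-a}}V^{-\frac{z-1-a}{s-a}}\phi^{b-z-b\frac{z-1-a}{s-a}}F^z(\Fgrad\phi), \]
the second factor raised to $\frac{s-a}{s-z+1}$ gives $V^{-\frac{z-1-a}{s-z+1}}F^{\frac{z(s-a)}{s-z+1}}(\Fgrad\phi)\,\phi^{b-\frac{z(s-a)}{s-z+1}}$. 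The power of $\phi$ is nonnegative because $b>\frac{ps}{s-p+1}\ge\frac{z(s-a)}{s-z+1}$, so we may bound it by $1$. Choosing $\varepsilon$ to absorb half of $\int Vu^{s-a}\phi^b$, and tracking the powers, gives the factor $\bigl(a^{1-z}b^z\bigr)^{\frac{s-a}{s-z+1}}$, that is,
\[ a^{-\frac{(z-1)(s-a)}{s-z+1}}\,b^{\frac{z(s-a)}{s-z+1}}, \]
together with powers of $2$ coming from the two halvings. This matches the stated $C(b)$, and the supremum over $a\in(0,\min\{1/2,q-1\})$ is finite.

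The main obstacle is the bookkeeping with $\delta$. The left side contains $Vu^s(u+\delta)^{-a}$ rather than $Vu^{s-a}$, while after the first Young step the right side contains powers of $u+\delta$. The cleanest fix is to run the second Young step with $u_\delta^{s-a}$, using $u^s\ge u_\delta^{s}-C\delta^{\min\{s,1\}}u_\delta^{s-1}\cdots$; alternatively, and more simply, one can use Fatou's lemma and monotone convergence on the set $\{u>0\}$ after passing to the limit. I would first try the approach of \cite{MaMP15}: bound $Vu^{s}u_\delta^{-a}\ge Vu^{s-a}\mathbf 1$ only in the limit. Concretely, keep $u_\delta^{z-1-a}$ on the right, note that $u_\delta^{z-1-a}\to u^{z-1-a}$ with domination on $\operatorname{supp}\phi$ (since $u\in L^{p}_{\mathrm{loc}}$), let $\delta\to0$ in the inequality obtained after the first Young step, and then perform the second Young step with $u$ itself. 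This ordering avoids the $\delta$-mismatch. Finiteness of the absorbed terms also requires justification; I would handle it by assuming the right-hand side of the lemma is finite (otherwise there is nothing to prove) and noting that the absorbed quantities are finite for $\delta>0$.
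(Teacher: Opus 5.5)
Your proposal is correct and is essentially the paper's proof. Both use the test function $u^{-a}\phi^{b}$ (regularized), the fundamental inequality, Young's inequality with exponents $z$ and $z/(z-1)$ absorbing half of each gradient term, and then Young's inequality with exponents $\frac{s-a}{z-1-a}$ and $\frac{s-a}{s-z+1}$. One bookkeeping point: in the paper each of the $p$- and $q$-terms absorbs only $\frac14\int_M Vu^{s-a}\phi^{b}\,\mathrm{d}m$, so that half survives and doubling gives exactly the stated $C(b)$; if each term absorbed half, nothing would remain on the left.

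Your ordering of the limit $\delta\to0$ is a cleaner justification of the approximation step that the paper only sketches. You take it after the first Young step, using monotone convergence on the left and domination by $(u+1)^{z-1-a}$ on the right. That inequality then shows $\int_M Vu^{s-a}\phi^{b}\,\mathrm{d}m<\infty$, so this term may legitimately be absorbed in the second Young step.
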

	\begin{proof}
		By a standard approximation argument, we may assume that $u$ is strictly positive and $u^{-1} \in L_{\mathrm{loc}}^{\infty}(M)$; otherwise, we may work with $u + \lambda$ (i.e., choosing $\psi=(u+\lambda)^{-a}\phi^b$ below) and let $\lambda \to 0^+$ at the end. Choose the test function $\psi = u^{-a}\phi^{b}$. Its differential is
		\[ \ud\psi=-au^{-a-1}\phi^{b}\ud u+bu^{-a}\phi^{b-1}\ud\phi \quad  \text{a.e. in } M. \]
		Insert $\psi$ into the weak formulation \eqref{eq-weakSol-PDE-(p,q)} to obtain
		\[ \begin{aligned}
			&a\qint{M}{u^{-1-a}\phi^{b}F^{p-2}(\Fgrad u)\ud u(\Fgrad u)}{m} +a\qint{M}{u^{-1-a}\phi^{b}F^{q-2}(\Fgrad u)\ud u(\Fgrad u)}{m}+\qint{M}{Vu^{s-a}\phi^{b}}{m}\\
			&\leq b\qint{M}{u^{-a}\phi^{b-1}F^{p-2}(\Fgrad u) \ud\phi(\Fgrad u)}{m}+b\qint{M}{u^{-a}\phi^{b-1}F^{q-2}(\Fgrad u) \ud\phi(\Fgrad u)}{m}.
		\end{aligned} \]
		Using $\ud u(\Fgrad u)=F^{2}(\Fgrad u)$ and the Cauchy--Schwarz inequality $\ud\phi(\Fgrad u) \leq F(\Fgrad u)\,F(\Fgrad \phi)$, we get
		\begin{equation}\label{eq-inequality-integral-general-temp1}
			\begin{aligned}
				&a\qint{M}{u^{-1-a}\phi^{b}F^{p}(\Fgrad u)}{m}+a\qint{M}{u^{-1-a}\phi^{b}F^{q}(\Fgrad u)}{m}+\qint{M}{Vu^{s-a}\phi^{b}}{m}\\
				&\leq b\qint{M}{u^{-a}\phi^{b-1}F^{p-1}(\Fgrad u)F(\Fgrad\phi)}{m}+b\qint{M}{u^{-a}\phi^{b-1}F^{q-1}(\Fgrad u)F(\Fgrad\phi)}{m}.
			\end{aligned}
		\end{equation}
		
		Now we apply Young's inequality with the exponents $p$ and $p/(p-1)$ to the first term of the right-hand side of \eqref{eq-inequality-integral-general-temp1}. So we obtain
		\[ b\qint{M}{u^{-a}\phi^{b-1}F^{p-1}(\Fgrad u)F(\Fgrad \phi)}{m}\leq \frac{a}{2}\qint{M}{u^{-1-a}\phi^{b}F^{p}(\Fgrad u)}{m}+b^{p} \sibb{\frac{2}{a}}^{p-1}\qint{M}{\phi^{b-p}u^{p-a-1}F^{p}(\Fgrad \phi)}{m}. \]
		An entirely similar computation for the $q$-term yields
		\[ b\qint{M}{u^{-a}\phi^{b-1}F^{q-1}(\Fgrad u)F(\Fgrad \phi)}{m}\leq \frac{a}{2}\qint{M}{u^{-1-a}\phi^{b}F^{q}(\Fgrad u)}{m}+b^{q} \sibb{\frac{2}{a}}^{q-1}\qint{M}{\phi^{b-q}u^{q-a-1}F^{q}(\Fgrad \phi)}{m}. \]
		
		Substituting these two inequalities into \eqref{eq-inequality-integral-general-temp1} leads to
		\begin{equation}\label{eq-inequality-integral-general-temp2}
			\begin{aligned}
				&\frac{a}{2}\qint{M}{u^{-1-a}\phi^{b}F^{p}(\Fgrad u)}{m}+\frac{a}{2}\qint{M}{u^{-1-a}\phi^{b}F^{q}(\Fgrad u)}{m}+\qint{M}{Vu^{s-a}\phi^{b}}{m}\\
				&\leq b^{p} \sibb{\frac{2}{a}}^{p-1}\qint{M}{\phi^{b-p}u^{p-a-1}F^{p}(\Fgrad \phi)}{m}+b^{q} \sibb{\frac{2}{a}}^{q-1}\qint{M}{\phi^{b-q}u^{q-a-1}F^{q}(\Fgrad \phi)}{m}.
			\end{aligned}
		\end{equation}
		
		To eliminate the terms still containing $u$ on the right-hand side, we employ Young's inequality once more. For the $p$-term we use the  exponents $(s-a)/(p-a-1)$ and $(s-a)/(s-p+1)$, which are well-defined because $p-1>a$ and $s>p-1$. This gives
		\[ \begin{aligned}
			&\frac{1}{4} \qint{M}{V u^{s-a} \phi^{b}}{m}+b^{\frac{p(s-a)}{s-p+1}}2^{\frac{(p-1)(2+s)-a(1+p)}{s-p+1}} a^{-\frac{(p-1)(s-a)}{s-p+1}} \qint{M}{\phi^{b-\frac{p(s-a)}{s-p+1}}V^{-\frac{p-a-1}{s-p+1}} F^{\frac{p(s-a)}{s-p+1}}(\Fgrad \phi)}{m}\\
			&\geq b^{p}\sibb{\frac{2}{a}}^{p-1} \qint{M}{\phi^{b-p} u^{p-a-1} F^{p}(\Fgrad \phi)}{m}.
		\end{aligned} \]
		Recalling that $b>ps/(s-p+1)$ and $0 \leq \phi \leq 1$, we can absorb the exponent of $\phi$ into the constant and write the above estimate compactly as
		\[ \begin{aligned}
			&\frac{1}{4} \qint{M}{V u^{s-a} \phi^{b}}{m}+\frac{C(b)}{2} a^{-\frac{(p-1)(s-a)}{s-p+1}} \qint{M}{V^{-\frac{p-a-1}{s-p+1}} F^{\frac{p(s-a)}{s-p+1}}(\Fgrad \phi)}{m}\\
			&\geq b^{p}\sibb{\frac{2}{a}}^{p-1} \qint{M}{\phi^{b-p} u^{p-a-1} F^{p}(\Fgrad \phi)}{m}.
		\end{aligned} \]
		
		For the $q$-term, a similar computation yields
		\[ \begin{aligned}
			&\frac{1}{4} \qint{M}{V u^{s-a} \phi^{b}}{m}+\frac{C(b)}{2} a^{-\frac{(q-1)(s-a)}{s-q+1}} \qint{M}{V^{-\frac{q-a-1}{s-q+1}} F^{\frac{q(s-a)}{s-q+1}}(\Fgrad \phi)}{m}\\
			&\geq b^{q}\sibb{\frac{2}{a}}^{q-1} \qint{M}{\phi^{b-q} u^{q-a-1} F^{q}(\Fgrad \phi)}{m}.
		\end{aligned} \]
		
		Substituting the estimates for both the $p$-term and the $q$-term into \eqref{eq-inequality-integral-general-temp2}, we obtain exactly the desired inequality
		\[ \begin{aligned}
			&a\qint{M}{F^{p}(\Fgrad u) u^{-1-a} \phi^{b}}{m} + a\qint{M}{F^{q}(\Fgrad u) u^{-1-a} \phi^{b}}{m} + \qint{M}{V u^{s-a} \phi^{b}}{m} \\
			&\leq C(b) a^{-\frac{(p-1)(s-a)}{s-p+1}} \qint{M}{V^{-\frac{p-a-1}{s-p+1}} F^{\frac{p(s-a)}{s-p+1}}(\Fgrad \phi)}{m} +C(b) a^{-\frac{(q-1)(s-a)}{s-q+1}} \qint{M}{V^{-\frac{q-a-1}{s-q+1}} F^{\frac{q(s-a)}{s-q+1}}(\Fgrad \phi)}{m},
		\end{aligned} \]
		where the constant $C(b)$ is exactly the one given in the statement of Lemma~\ref{lem-integralIdentity-cutFunction-young}. This completes the proof.
	\end{proof}
	
	From Lemma~\ref{lem-integralIdentity-cutFunction-young} we see that the gradient terms can be controlled by integrals involving only the cut-off function $\phi$ and the potential $V$. The next lemma, which can be viewed as a H\"{o}lder-type estimate, further exploits this kind of controls in order to use the growth hypothesis HP2.
	
	\begin{lem}\label{lem-integralIdentity-cutFunction-holder}
		Let $u$ be a nonnegative weak solution of \eqref{eq-main-PDE-(p,q)}. Assume that the constants $a,b$ satisfy
		\[ 0<a<\min\set{\frac{1}{2}, q-1, \frac{s-p+1}{2(p-1)}}, \quad b>\frac{2ps}{s-p+1}. \]
		Then there exists a constant $C(b)>0$ such that for any $\phi\in\mathrm{Lip}(M)$ with compact support and $0\leq\phi\leq 1$, the following estimate holds
		\[ \begin{aligned}
			\qint{M}{V u^{s}\phi^{b}}{m}&\leq C(b)\sibb{a^{-1}Q}^{\frac{p-1}{p}}\sibb{\qint{M\minus K}{Vu^{s}\phi^{b}}{m}}^{\frac{(a+1)(p-1)}{sp}}J_p^{\frac{s-(a+1)(p-1)}{sp}}\\
			&\quad +C(b)\sibb{a^{-1}Q}^{\frac{q-1}{q}}\sibb{\qint{M\minus K}{Vu^{s}\phi^{b}}{m}}^{\frac{(a+1)(q-1)}{sq}}J_q^{\frac{s-(a+1)(q-1)}{sq}},
		\end{aligned}  \]
		where $K:=\set{x : \phi(x)=1}$, and
		\[ J_p:=\qint{M\minus K}{V^{-\frac{(a+1)(p-1)}{s-(a+1)(p-1)}}F^{\frac{ps}{s-(a+1)(p-1)}}(\Fgrad\phi)}{m},\qquad
		J_q:=\qint{M\minus K}{V^{-\frac{(a+1)(q-1)}{s-(a+1)(q-1)}}F^{\frac{qs}{s-(a+1)(q-1)}}(\Fgrad\phi)}{m}, \]
		\[ Q:=a^{-\frac{(p-1)(s-a)}{s-p+1}} \qint{M}{V^{-\frac{p-a-1}{s-p+1}} F^{\frac{p(s-a)}{s-p+1}}(\Fgrad \phi)}{m}
		+a^{-\frac{(q-1)(s-a)}{s-q+1}} \qint{M}{V^{-\frac{q-a-1}{s-q+1}} F^{\frac{q(s-a)}{s-q+1}}(\Fgrad \phi)}{m}. \]
	\end{lem}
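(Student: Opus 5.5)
Test the weak formulation \eqref{eq-weakSol-PDE-(p,q)} with $\psi=\phi^{b}$ itself (no power of $u$); this is admissible since $\phi$ is Lipschitz with compact support. Since $\ud\psi=b\phi^{b-1}\ud\phi$ vanishes a.e.\ on $K=\{\phi=1\}$ and outside $\operatorname{supp}\phi$, the Cauchy--Schwarz inequality $\ud\phi(\Fgrad u)\le F(\Fgrad u)F(\Fgrad\phi)$ gives
\[
\qint{M}{Vu^{s}\phi^{b}}{m}\le b\qint{M\minus K}{\phi^{b-1}F^{p-1}(\Fgrad u)F(\Fgrad\phi)}{m}+b\qint{M\minus K}{\phi^{b-1}F^{q-1}(\Fgrad u)F(\Fgrad\phi)}{m}.
\]
I treat the $p$-term; the $q$-term is identical with $p$ replaced by $q$.

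Write the $p$-integrand as
\[
\bigl(F^{p-1}(\Fgrad u)u^{-\frac{(1+a)(p-1)}{p}}\phi^{\frac{b(p-1)}{p}}\bigr)\cdot\bigl(u^{\frac{(1+a)(p-1)}{p}}\phi^{b-1-\frac{b(p-1)}{p}}F(\Fgrad\phi)\bigr)
\]
and apply H\"older's inequality with exponents $p/(p-1)$ and $p$. The first factor gives
\[
\Bigl(\qint{M}{F^{p}(\Fgrad u)u^{-1-a}\phi^{b}}{m}\Bigr)^{\frac{p-1}{p}}\le\bigl(C(b)a^{-1}Q\bigr)^{\frac{p-1}{p}}
\]
by Lemma~\ref{lem-integralIdentity-cutFunction-young}; the lemma applies because the present conditions on $a,b$ are stronger than its conditions. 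The second factor is
\[
\Bigl(\qint{M\minus K}{u^{(1+a)(p-1)}\phi^{b-p}F^{p}(\Fgrad\phi)}{m}\Bigr)^{1/p}.
\]

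Apply H\"older again to this second factor, with exponents $\alpha=s/((a+1)(p-1))$ and $\alpha'=s/(s-(a+1)(p-1))$, splitting the integrand as
\[
\bigl(V u^{s}\phi^{b}\bigr)^{1/\alpha}\cdot V^{-1/\alpha}\phi^{\,b-p-b/\alpha}F^{p}(\Fgrad\phi).
\]
We need $\alpha>1$, i.e.\ $(a+1)(p-1)<s$. This follows from $a<(s-p+1)/(2(p-1))$: indeed $(a+1)(p-1)<(p-1)+(s-p+1)/2<s$. The power of $\phi$ in the second factor, after raising to $\alpha'$, is $(b-p-b/\alpha)\alpha'=b-p\alpha'$. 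This must be nonnegative so that it can be bounded by $1$, i.e.\ $b\ge ps/(s-(a+1)(p-1))$. The bound on $a$ gives $s-(a+1)(p-1)>(s-p+1)/2$, so $ps/(s-(a+1)(p-1))<2ps/(s-p+1)<b$, which is exactly why the hypothesis on $b$ is doubled.

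The second H\"older step therefore yields
\[
\Bigl(\qint{M\minus K}{Vu^{s}\phi^{b}}{m}\Bigr)^{\frac{(a+1)(p-1)}{s}}J_p^{\frac{s-(a+1)(p-1)}{s}}.
\]
Raising to the power $1/p$ gives exactly the stated exponents $\frac{(a+1)(p-1)}{sp}$ and $\frac{s-(a+1)(p-1)}{sp}$. Collecting the constants $b$ and $C(b)^{(p-1)/p}$ into a new $C(b)$, and adding the $q$-term, completes the estimate.

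The only delicate points are the exponent bookkeeping and the admissibility of the test function. For the latter, $u\in W^{1,p}_{\mathrm{loc}}$ is used with $\phi^b\in W^{1,p}\cap L^\infty$ of compact support, so no regularization of $u$ is needed here. If $u$ can vanish, the gradient integral from Lemma~\ref{lem-integralIdentity-cutFunction-young} should be interpreted via the $u+\lambda$ approximation, letting $\lambda\to0$ by monotone convergence.
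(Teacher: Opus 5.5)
Your proposal is correct and follows the paper's proof essentially step for step: you test with $\psi=\phi^{b}$, apply Cauchy--Schwarz, then H\"older with exponents $p/(p-1)$ and $p$ together with Lemma~\ref{lem-integralIdentity-cutFunction-young} for the gradient factor, then H\"older with the pair $s/((a+1)(p-1))$, $s/(s-(a+1)(p-1))$, and use $b>2ps/(s-p+1)>ps/(s-(a+1)(p-1))$ to drop the leftover power of $\phi$. Your handling of the positivity of $u$ is at the same level as the paper's (which simply assumes $u>0$ via the approximation in Lemma~\ref{lem-integralIdentity-cutFunction-young}), and your exponent bookkeeping is, if anything, more explicit.
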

	\begin{proof}
		As in the proof of Lemma~\ref{lem-integralIdentity-cutFunction-young}, we assume that $u>0$. Taking $\psi=\phi^{b}$ as the test function, we have
		\begin{equation}\label{eq-integralIdentity-cutFunction-holder-temp1}
			\begin{aligned}
				\qint{M}{V u^{s}\phi^{b}}{m}&\leq b\qint{M}{F^{p-2}(\Fgrad u)\phi^{b-1}\ud\phi(\Fgrad u)}{m}+b\qint{M}{F^{q-2}(\Fgrad u)\phi^{b-1}\ud\phi(\Fgrad u)}{m}\\
				&\leq b\qint{M}{F^{p-1}(\Fgrad u)\phi^{b-1} F(\Fgrad\phi) }{m}+b\qint{M}{F^{q-1}(\Fgrad u)\phi^{b-1} F(\Fgrad\phi)}{m},
			\end{aligned}
		\end{equation}
		where we used the Cauchy--Schwarz inequality to bound $\ud\phi(\Fgrad u) \leq F(\Fgrad\phi)F(\Fgrad u)$.
		
		We estimate the $p$-term first. By H\"{o}lder's inequality with exponents $p$ and $p/(p-1)$, we get
		\[ \begin{aligned}
			&b\sibb{\qint{M\minus K}{F^{p}(\Fgrad u)\phi^{b}u^{-a-1}}{m}}^{\frac{p-1}{p}}\sibb{\qint{M\minus K}{\phi^{b-p}u^{(a+1)(p-1)}F^{p}(\Fgrad\phi)}{m} }^{\frac{1}{p}}\\
			&\geq b\qint{M}{F^{p-1}(\Fgrad u)\phi^{b-1} F\sibb{\Fgrad\phi} }{m}.
		\end{aligned}  \]
		Notice that on the set where $\phi=1$ the gradient $\Fgrad\phi$ vanishes, so the integration can be restricted to $M\minus K$ without affecting the terms involving $\Fgrad\phi$.
		
		Since $0<a<\min\set{1/2,q-1}$ and $b>ps/(s-p+1)$, we apply Lemma~\ref{lem-integralIdentity-cutFunction-young} with the same $\phi$. This gives
		\[ \qint{M\minus K}{F^{p}(\Fgrad u)\phi^{b}u^{-a-1}}{m}\leq a^{-1}C(b)Q. \]
		
		For the remaining factor containing $u^{(a+1)(p-1)}$, we apply H\"{o}lder's inequality with the pair
		\[ \sibb{\frac{s}{(a+1)(p-1)}, \frac{s}{s-(a+1)(p-1)}}, \]
		which is valid because $a<(s-p+1)/(p-1)$. This gives
		\[ \begin{aligned}
			&\sibb{\qint{M\minus K}{Vu^{s}\phi^{b}}{m}}^{\frac{(a+1)(p-1)}{s}}\sibb{\qint{M\minus K}{V^{-\frac{(a+1)(p-1)}{s-(a+1)(p-1)}}\phi^{b-\frac{ps}{s-(a+1)(p-1)}}F^{\frac{ps}{s-(a+1)(p-1)}}(\Fgrad\phi)}{m}}^{\frac{s-(a+1)(p-1)}{s}}\\
			&\quad\geq\qint{M\minus K}{\phi^{b-p}u^{(a+1)(p-1)}F^{p}(\Fgrad\phi)}{m}.
		\end{aligned} \]
		Since $b>2ps/(s-p+1)$ and $a<(s-p+1)/(2(p-1))$ guarantee $b>ps/(s-(a+1)(p-1))$, we can drop the extra power of $\phi$ inside the left-side integral, and the integral becomes exactly $J_{p}$.
		
		Putting together the estimates, we obtain for the $p$-term
		\[ \begin{aligned}
			b\qint{M}{F^{p-1}(\Fgrad u) \phi^{b-1} F(\Fgrad\phi)}{m}
			\leq C(b) \sibb{a^{-1}Q}^{\frac{p-1}{p}}\sibb{\qint{M\minus K}{V u^{s}\phi^{b}}{m}}^{\frac{(a+1)(p-1)}{sp}} J_{p}^{\frac{s-(a+1)(p-1)}{sp}}.
		\end{aligned} \]
		
		An entirely parallel argument for the $q$-term yields
		\[ \begin{aligned}
			b\qint{M}{F^{q-1}(\Fgrad u) \phi^{b-1} F(\Fgrad\phi)}{m}
			\leq C(b) \sibb{a^{-1}Q}^{\frac{q-1}{q}}\sibb{\qint{M\minus K}{V u^{s}\phi^{b}}{m}}^{\frac{(a+1)(q-1)}{sq}} J_{q}^{\frac{s-(a+1)(q-1)}{sq}}.
		\end{aligned} \]
		Substituting the two bounds into \eqref{eq-integralIdentity-cutFunction-holder-temp1} completes the proof.
	\end{proof}
	
	The hypotheses HP1--HP3 are formulated as integral bounds on the potential $V$ over annular regions.
	The next lemma translates these estimates over annular regions into those over the exterior $M\minus \ball[R]$, which will be useful when we combine them with Lemmas~\ref{lem-integralIdentity-cutFunction-young} and \ref{lem-integralIdentity-cutFunction-holder}.
	
	\begin{lem}\label{lem-inequality-decom-elliptic}
		Let $f \in C^{0}(\R_{+})$ be a nonincreasing nonnegative function.
		\begin{enumerate}
			\item If HP1 holds, then for any sufficiently large $R$ and any $\epsilon \in (0,\epsilon_{0})$,
			\[ \qint{M\minus B_{R}}{f(r(x))V^{-\bar{k}_{z}+\epsilon}}{m}\leq C\iint{\frac{R}{2}}{\infty}{f(r)r^{s_{z}+C_{0}\epsilon-1}\sibb{\ln r}^{k_{z}}}{r}. \]
			\item If HP2 holds, then for any sufficiently large $R$ and any $\epsilon \in (0,\epsilon_{0})$,
			\[ \qint{M\minus B_{R}}{f(r(x))V^{-\bar{k}_{z}+\epsilon}}{m}\leq C\iint{\frac{R}{2}}{\infty}{f(r)r^{s_{z}+C_{0}\epsilon-1}\sibb{\ln r}^{\bar{k}_{z}}}{r}, \]
			and the same estimate holds with $-\bar{k}_{z}-\epsilon$ in place of $-\bar{k}_{z}+\epsilon$,
			\[ \qint{M\minus B_{R}}{f(r(x))V^{-\bar{k}_{z}-\epsilon}}{m}\leq C\iint{\frac{R}{2}}{\infty}{f(r)r^{s_{z}+C_{0}\epsilon-1}\sibb{\ln r}^{\bar{k}_{z}}}{r}. \]
			\item If HP3 holds, then for any sufficiently large $R$ and any $\epsilon \in (0,\epsilon_{0})$,
			\[ \qint{M\minus B_{R}}{f(r(x))V^{-\bar{k}_{z}+\epsilon}}{m}\leq C\iint{\frac{R}{2}}{\infty}{f(r)r^{s_{z}+C_{0}\epsilon-1}\sibb{\ln r}^{k}\ue^{-\epsilon\theta\sibb{\ln r}^{\tau_{z}}}}{r}. \]
		\end{enumerate}
	\end{lem}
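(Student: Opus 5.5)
The plan is a dyadic decomposition of $M\minus B_R$ into annuli, followed by a comparison of sums with integrals using the monotonicity of $f$. All three cases follow the same pattern, so I describe it for a generic annular bound
\[ \qint{\ball[\rho]\minus\ball[\rho/2]}{V^{-\bar{k}_{z}\pm\epsilon}}{m}\leq C_1 g(\rho),\qquad \rho\ge R_0, \]
where $g(\rho)=\rho^{s_z+C_0\epsilon}h(\rho)$. Here $h(\rho)$ is $(\ln\rho)^{k_z}$, $(\ln\rho)^{\bar k_z}$, or $(\ln\rho)^{k}\ue^{-\epsilon\theta(\ln\rho)^{\tau_z}}$ according to HP1, HP2 or HP3. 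Take $R\ge 2R_0$ (with $R_0\geq e$, say) and write
\[ M\minus B_R\subset\Union_{j\ge1}\sibb{B_{2^jR}\minus B_{2^{j-1}R}}. \]
This covering uses only that $r(x)<\infty$ for every $x$. The sets $\set{x: r(x)\geq \rho}$ are defined with the forward distance, so nonsymmetry plays no role.

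First I bound each annular piece. On $B_{2^jR}\minus B_{2^{j-1}R}$ we have $r(x)\ge 2^{j-1}R$, and $f$ is nonincreasing and nonnegative, so $f(r(x))\le f(2^{j-1}R)$. The hypothesis applied with $\rho=2^jR$ then gives
\[ \qint{M\minus B_R}{f(r)V^{-\bar k_z\pm\epsilon}}{m}\le C_1\sum_{j\ge1} f(2^{j-1}R)\,g(2^jR). \]
If the sum diverges, the integral on the right of the claim also diverges by the next step, so there is nothing to prove.

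The second step compares each term with an integral over $[2^{j-2}R,2^{j-1}R]$. For $r$ in this interval, $f(r)\ge f(2^{j-1}R)$. I need $g(2^jR)\le C\,g(r)$ with $C$ uniform in $j$, in $\epsilon\in(0,\epsilon_0)$ and in large $R$. The power factor satisfies $(2^jR)^{s_z+C_0\epsilon}\le 4^{s_z+C_0\epsilon_0}r^{s_z+C_0\epsilon}$. For the logarithmic factor, $\ln(2^jR)\le \ln 4+\ln r\le 2\ln r$ once $r\ge R/2$ is large, so $(\ln 2^jR)^{k}\le 2^{k}(\ln r)^{k}$ for $k\ge0$. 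Given this, since $\int_{2^{j-2}R}^{2^{j-1}R}r^{-1}\ud r=\ln 2$,
\[ f(2^{j-1}R)g(2^jR)\le \frac{C}{\ln 2}\iint{2^{j-2}R}{2^{j-1}R}{f(r)g(r)r^{-1}}{r}. \]
Summing over $j\ge1$ gives exactly $C\iint{R/2}{\infty}{f(r)r^{s_z+C_0\epsilon-1}h(r)}{r}$. For HP2 I run the argument twice, once with each sign of $\epsilon$.

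The main obstacle is the exponential factor in HP3. Its exponent is decreasing, so the comparison goes the wrong way: I need
\[ \ue^{-\epsilon\theta(\ln 2^jR)^{\tau_z}}\le C\,\ue^{-\epsilon\theta(\ln r)^{\tau_z}}, \qquad r\in[2^{j-2}R,2^{j-1}R]. \]
This does hold, because $\ln 2^jR\ge \ln r$ and so the left side is at most $1\cdot \ue^{-\epsilon\theta(\ln r)^{\tau_z}}$. The inequality is therefore trivially true with constant $1$, since decreasing in the outer argument works in our favor here. The only care needed is that every constant is independent of $\epsilon$, which the bounds above ensure via $\epsilon<\epsilon_0$. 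With this, all three statements follow.
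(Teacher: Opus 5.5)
Your proof is correct and is essentially the paper's argument. You decompose $M\setminus B_R$ into forward dyadic annuli, bound $f(r(x))$ by its value at the inner radius, apply the annular hypothesis at the outer radius, and dominate each term of the series by the integral of $f(r)r^{s_z+C_0\epsilon-1}h(r)$ over the preceding dyadic interval. The only differences are cosmetic: you justify the last step with the ratio bound $g(2^jR)\le C g(r)$ (with constants uniform in $\epsilon$), where the paper uses monotonicity of $r^{s_z+C_0\epsilon-1}(\ln r)^{k_z}$. In HP3, as you eventually note and as the paper observes, the decreasing exponential factor works in your favor rather than being an obstacle.
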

	\begin{proof}
		We prove the inequality for HP1 in detail; the other two cases follow by exactly the same reasoning with obvious modifications. The decomposition now uses forward geodesic annuli,
		\[ M\minus B_{R} = \Union_{j=0}^{\infty} \sibb{B_{2^{j+1}R} \minus B_{2^{j}R}}. \]
		
		Since $f$ is nonincreasing and $r(x) \ge 2^{i}R$ on the $i$th annulus, we have
		\[\begin{aligned}
			\qint{M\minus\ball[R]}{f(r) V^{-\bar{k}_{z}+\epsilon}}{m}
			&= \sum_{i=0}^{\infty} \qint{\ball[2^{i+1}R]\minus\ball[2^{i}R]}{f(r) V^{-\bar{k}_{z}+\epsilon}}{m} \\
			&\leq \sum_{i=0}^{\infty} f(2^{i}R) \qint{\ball[2^{i+1}R]\minus\ball[2^{i}R]}{V^{-\bar{k}_{z}+\epsilon}}{m}.
		\end{aligned}\]
		
		By the assumption HP1 applied with $R_i = 2^{i+1}R$, we get
		\[ \qint{\ball[2^{i+1}R]\minus\ball[2^{i}R]}{V^{-\bar{k}_{z}+\epsilon}}{m}
		\leq C_{1} (2^{i+1}R)^{s_{z}+C_{0}\epsilon} \sibb{\ln(2^{i+1}R)}^{k_{z}}. \]
		
		Hence,
		\[ \begin{aligned}
			\qint{M\minus\ball[R]}{f(r) V^{-\bar{k}_{z}+\epsilon}}{m}
			&\leq C_{1} \sum_{i=0}^{\infty} f(2^{i}R) (2^{i+1}R)^{s_{z}+C_{0}\epsilon} \sibb{\ln(2^{i+1}R)}^{k_{z}} \\
			&\leq C \sum_{i=0}^{\infty} f(2^{i}R) (2^{i-1}R)^{s_{z}+C_{0}\epsilon-1} \sibb{\ln(2^{i-1}R)}^{k_{z}}(2^{i-1}R).
		\end{aligned} \]
		The last factor $2^{i-1}R$ is exactly the length of the interval $[2^{i-1}R, 2^{i}R]$. Since $f$ is nonincreasing and the integrand $r^{s_{z}+C_{0}\epsilon-1}(\ln r)^{k_{z}}$ is increasing in $r$, we can bound the sum by the corresponding integral,
		\[ \begin{aligned}
			\sum_{i=0}^{\infty} f(2^{i}R) (2^{i-1}R)^{s_{z}+C_{0}\epsilon-1} (\ln(2^{i-1}R))^{k_{z}} (2^{i-1}R)
			&\leq \sum_{i=0}^{\infty} \iint{2^{i-1}R}{2^{i}R}{f(r) r^{s_{z}+C_{0}\epsilon-1} (\ln r)^{k_{z}}}{r}\\
			&=\iint{\frac{R}{2}}{\infty}{f(r) r^{s_{z}+C_{0}\epsilon-1} (\ln r)^{k_{z}}}{r},
		\end{aligned} \]
		which gives exactly the claimed estimate.
		
		The HP2 variants are proved similarly, with $\bar{k}_{z}$ in place of $k_z$. For HP3, observe that $f(r)\ue^{-\epsilon\theta(\ln r)^{\tau_z}}$ is nonincreasing, and $r^{s_z+C_0\epsilon-1}(\ln r)^k$ is increasing for all sufficiently large $r$. Hence, the same argument yields the desired estimate.
\end{proof}
	
\subsection{Integral estimates for the parabolic inequality}\label{sec-pre-inte-parabolic}
	We now establish the parabolic analogues of the Caccioppoli-type and H\"{o}lder-type estimates proved in the elliptic setting, together with the corresponding integral decomposition lemmas on the space-time domains $E_R$. The method of this part is parallel to that in the elliptic setting, but we must also handle the time derivative of solutions.
	
	We first derive a Caccioppoli-type inequality adapted for the weak formulation of solutions to the parabolic inequality.
	
	\begin{lem}\label{lem-integralIdentity-cutFunction-young-parabolic}
		Let $u$ be a nonnegative weak solution of \eqref{eq-main-PDE-(p,q)-parabolic}. For any positive constants $a$, $b$ satisfying
		\[ 0<a<\min\set{\frac{1}{2}, q-1}, \quad b>\max\set{\frac{s}{s-1}, \frac{ps}{s-p+1}}, \]
		there exists a constant $C(b)>0$ such that for every Lipschitz function $\phi$ with compact support and $0 \leq\phi\leq 1$, the following estimate holds
		\[ \begin{aligned}
			&a\int_{S} u^{-a-1}{\phi}^{b}F^{p}(\Fgrad u) \ud m \ud t+a\int_{S} u^{-a-1}{\phi}^{b}F^{q}(\Fgrad u) \ud m \ud t + \frac{1}{2}\int_{S} Vu^{-a+s}{\phi}^{b} \ud m \ud t\\
			&\leq C(b)a^{-\frac{(p-1)(s-a)}{s-p+1}}\int_{S}F^{\frac{p(s-a)}{s-p+1}} (\Fgrad \phi)V^{-\frac{p-a-1}{s-p+1}} \ud m \ud t +C(b)a^{-\frac{(q-1)(s-a)}{s-q+1}}\int_{S}F^{\frac{q(s-a)}{s-q+1}}(\Fgrad \phi)V^{-\frac{q-a-1}{s-q+1}} \ud m \ud t  \\
			&+ C(b)\int_{S}{\norm{\pt_{t}\phi}}^{\frac{s-a}{s-1}} V^{-\frac{1-a}{s-1}} \ud m \ud t.
		\end{aligned} \]
	\end{lem}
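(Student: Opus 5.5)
We follow the proof of Lemma~\ref{lem-integralIdentity-cutFunction-young} and add the treatment of the time-derivative term. As there, we reduce to $u>0$ with $u^{-1}\in L^\infty_{\mathrm{loc}}$ by working with $u+\lambda$ and letting $\lambda\to0^+$. We take the test function $\psi=u^{-a}\phi^{b}$ in the weak formulation \eqref{eq-weakSol-PDE-(p,q)-parabolic}. Here $\phi=\phi(x,t)$ is Lipschitz with compact support in $S$, and nonnegativity of $u_0$ lets us drop the term $\int_M u_0\psi(x,0)\,\ud m$ from the left side. The spatial part is handled verbatim as in the elliptic case. By $\ud u(\Fgrad u)=F^2(\Fgrad u)$ and the Cauchy--Schwarz inequality $\ud\phi(\Fgrad u)\le F(\Fgrad u)F(\Fgrad\phi)$, we obtain the gradient terms $a\int u^{-1-a}\phi^bF^z(\Fgrad u)$ on the left and $b\int u^{-a}\phi^{b-1}F^{z-1}(\Fgrad u)F(\Fgrad\phi)$ on the right, for $z\in\{p,q\}$.

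The new ingredient is the term $\int_S u\,\pt_t\psi$. Formally $u\,\pt_t(u^{-a}\phi^b)=-a u^{-a}\phi^b\pt_t u+bu^{1-a}\phi^{b-1}\pt_t\phi$. Since $u^{-a}\pt_t u=\pt_t\bigl(u^{1-a}\bigr)/(1-a)$, integrating by parts in $t$ produces $\tfrac{ab}{1-a}\int u^{1-a}\phi^{b-1}\pt_t\phi$. There is also a boundary term at $t=0$ with sign $\tfrac{a}{1-a}\int u^{1-a}(x,0)\phi^b(x,0)$. This term either has a favourable sign or is compared with the $u_0$ term; since $u$ only lies in $W^{1,p}_{\mathrm{loc}}(S)$, this should really be done via Steklov averages. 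So the net time contribution is bounded by $C(b)\int_S u^{1-a}\phi^{b-1}|\pt_t\phi|\,\ud m\,\ud t$, using $a<1/2$ so that $1/(1-a)\le2$. Handling this integration by parts rigorously, keeping the correct sign at $t=0$, is the step I expect to be the main obstacle; I would first try to justify it with time mollification and a further cutoff near $t=0$.

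Next, Young's inequality with exponents $z$ and $z/(z-1)$ absorbs half of each gradient term, exactly as in \eqref{eq-inequality-integral-general-temp2}. This leaves $b^z(2/a)^{z-1}\int\phi^{b-z}u^{z-a-1}F^z(\Fgrad\phi)$ on the right. Each of these is then split by Young's inequality with exponents $(s-a)/(z-a-1)$ and $(s-a)/(s-z+1)$, against $\tfrac18\int Vu^{s-a}\phi^b$ instead of $\tfrac14$, so as to leave room for the time term. For the time term we use Young's inequality with exponents $(s-a)/(1-a)$ and $(s-a)/(s-1)$, which are valid since $s>1>a$. This bounds $\int u^{1-a}\phi^{b-1}|\pt_t\phi|$ by $\tfrac18\int Vu^{s-a}\phi^b$ plus $C\int V^{-\frac{1-a}{s-1}}\phi^{b-\frac{s-a}{s-1}}|\pt_t\phi|^{\frac{s-a}{s-1}}$.

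Finally, the condition $b>\max\{s/(s-1),ps/(s-p+1)\}$ makes all the resulting powers of $\phi$ nonnegative, so they can be dropped using $0\le\phi\le1$. The constants depend only on $b$, uniformly in $a\in(0,\min\{1/2,q-1\})$, since $(s-a)/(s-1)\le s/(s-1)$. Collecting the terms, the total absorbed amount is at most $\tfrac12\int Vu^{s-a}\phi^b$. This leaves exactly the factor $\tfrac12$ on the left, which gives the stated estimate.
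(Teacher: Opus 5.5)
Your proposal is correct and follows the paper's proof essentially step for step. You use the same test function $\psi=u^{-a}\phi^b$, the same integration by parts in $t$ (yielding the coefficient $b/(1-a)\le 2b$ on $\int u^{1-a}\phi^{b-1}\partial_t\phi$, with a sign-definite $t=0$ boundary term that is dropped), and the same three Young splittings, with $b>\max\{s/(s-1),ps/(s-p+1)\}$ removing the powers of $\phi$. The remaining differences are cosmetic: you drop the $u_0$ term at the outset rather than identifying $u(\cdot,0)$ with $u_0$ as the paper does, and you split with $\tfrac18$ instead of $\tfrac14$. Also, you still need to multiply through by $2$ to turn the leftover coefficient $a/2$ on the gradient terms into the stated $a$, which is absorbed into $C(b)$ (the paper glosses over the same step).
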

	\begin{proof}
		Similarly to the proof of Lemma~\ref{lem-integralIdentity-cutFunction-young}, we assume that $u>0$. Choose $\psi=u^{-a}\phi^{b}$ as the test function in \eqref{eq-weakSol-PDE-(p,q)-parabolic}. For almost every $(x,t)\in S$, we compute
		\[ \ud\psi = -au^{-1-a}\phi^{b}\ud u + b u^{-a}\phi^{b-1}\ud\phi, \qquad
		\pt_t\psi = -a u^{-1-a}\phi^{b}\pt_t u + b u^{-a}\phi^{b-1}\pt_t\phi. \]
		Insert $\psi$ into the weak formulation \eqref{eq-weakSol-PDE-(p,q)-parabolic} and use $\ud u(\Fgrad u)=F^2(\Fgrad u)$ together with the Cauchy--Schwarz inequality $\ud \phi(\Fgrad u)\leq F(\Fgrad \phi)F(\Fgrad u)$. We obtain
		\begin{equation}\label{eq-inequality-integral-holder-parabolic-temp1}
			\begin{aligned}
				&\int_{S}V u^{s-a}\phi^{b}\ud m\ud t +a\int_{S}F^{p}(\Fgrad u)u^{-a-1}\phi^{b}\ud m\ud t
				+a\int_{S}F^{q}(\Fgrad u)u^{-a-1}\phi^{b}\ud m\ud t\\
				&\leq b\int_{S}F^{p-1}(\Fgrad u)u^{-a}\phi^{b-1}F(\Fgrad\phi)\ud m\ud t+b\int_{S}F^{q-1}(\Fgrad u)u^{-a}\phi^{b-1}F(\Fgrad\phi)\ud m\ud t \\
				&\quad +a\int_{S}u^{-a}\phi^{b}\pt_t u\ud m\ud t-b\int_{S}u^{1-a}\phi^{b-1}\pt_t\phi\ud m\ud t-\int_{M}u_{0}^{1-a}\phi^{b}(x,0)\ud m .
			\end{aligned}
		\end{equation}
		
		For the time derivative term, we apply integration by parts and obtain
		\[ \begin{aligned}
			a\int_{S}u^{-a}\phi^{b}\pt_t u\ud m\ud t&= \frac{a}{1-a}\int_{S}\left(\pt_t(u^{1-a}\phi^{b}) - u^{1-a}\pt_t(\phi^{b})\right)\ud m\ud t \\
			&= -\frac{a}{1-a}\int_{M}u_{0}^{1-a}\phi^{b}(x,0)\ud m
			-\frac{ab}{1-a}\int_{S}u^{1-a}\phi^{b-1}\pt_t\phi\ud m\ud t .
		\end{aligned} \]
		
		Substituting this into \eqref{eq-inequality-integral-holder-parabolic-temp1} and dropping the nonpositive term
		$-a(1-a)^{-1}\int_{M}u_{0}^{1-a}\phi^{b}(x,0)\ud m$ (note $u_0\ge0$) yields
		\begin{equation}\label{eq-inequality-integral-holder-parabolic-temp2}
			\begin{aligned}
				&\int_{S}V u^{s-a}\phi^{b}\ud m\ud t +a\int_{S}F^{p}(\Fgrad u)u^{-a-1}\phi^{b}\ud m\ud t +a\int_{S}F^{q}(\Fgrad u)u^{-a-1}\phi^{b}\ud m\ud t\\
				&\leq b\int_{S}F^{p-1}(\Fgrad u)u^{-a}\phi^{b-1}F(\Fgrad\phi)\ud m\ud t +b\int_{S}F^{q-1}(\Fgrad u)u^{-a}\phi^{b-1}F(\Fgrad\phi)\ud m\ud t\\
				&\quad -\frac{b}{1-a}\int_{S}u^{1-a}\phi^{b-1}\pt_t\phi\ud m\ud t.
			\end{aligned}
		\end{equation}
		
		Now apply Young's inequality with the exponents $p$ and $p/(p-1)$ to the $p$-term on the right-hand side of \eqref{eq-inequality-integral-holder-parabolic-temp2}. We have
		\[ \begin{aligned}
			&b\int_{S}F^{p-1}(\Fgrad u)u^{-a}\phi^{b-1}F(\Fgrad\phi)\ud m\ud t \\
			&\leq \frac{a}{2}\int_{S}F^{p}(\Fgrad u)u^{-a-1}\phi^{b}\ud m\ud t
			+ b^{p}\sibb{\frac{a}{2}}^{-(p-1)} \int_{S}u^{p-a-1}\phi^{b-p}F^{p}(\Fgrad\phi)\ud m\ud t.
		\end{aligned} \]
		An entirely similar computation for the $q$-term yields
		\[ \begin{aligned}
			&b\int_{S}F^{q-1}(\Fgrad u)u^{-a}\phi^{b-1}F(\Fgrad\phi)\ud m\ud t \\
			&\leq \frac{a}{2}\int_{S}F^{q}(\Fgrad u)u^{-a-1}\phi^{b}\ud m\ud t
			+ b^{q}\sibb{\frac{a}{2}}^{-(q-1)}\int_{S}u^{q-a-1}\phi^{b-q}F^{q}(\Fgrad\phi)\ud m\ud t.
		\end{aligned} \]

		To eliminate the terms still containing $u$ on the right-hand side, we employ Young's inequality once more. For the $p$-term we use the exponents $(s-a)/(s-p+1)$ and $(s-a)/(p-a-1)$, which gives
		\[ \begin{aligned}
			&b^{p}\sibb{\frac{a}{2}}^{-(p-1)}\int_{S}u^{p-a-1}\phi^{b-p}F^{p}(\Fgrad\phi)\ud m\ud t\\
			&\leq C(b) a^{-\frac{(p-1)(s-a)}{s-p+1}} \int_{S}F^{\frac{p(s-a)}{s-p+1}}(\Fgrad\phi)V^{-\frac{p-a-1}{s-p+1}} \phi^{b-\frac{p(s-a)}{s-p+1}}\ud m\ud t
			+\frac{1}{4}\int_{S}V u^{s-a}\phi^{b}\ud m\ud t .
		\end{aligned} \]
		An analogous inequality holds for the $q$-term
		\[ \begin{aligned}
			&b^{q}\sibb{\frac{a}{2}}^{-(q-1)}\int_{S}u^{q-a-1}\phi^{b-q}F^{q}(\Fgrad\phi)\ud m\ud t \\
			&\leq C(b) a^{-\frac{(q-1)(s-a)}{s-q+1}} \int_{S}F^{\frac{q(s-a)}{s-q+1}}(\Fgrad\phi)V^{-\frac{q-a-1}{s-q+1}} \phi^{b-\frac{q(s-a)}{s-q+1}}\ud m\ud t
			+\frac{1}{4}\int_{S}V u^{s-a}\phi^{b}\ud m\ud t .
		\end{aligned} \]
		
		Since $a<1/2$, for the term containing $\partial_t \phi$, we have
		\[ \frac{b}{1-a}\int_{S}u^{1-a}\phi^{b-1}|\pt_{t}\phi|\ud m\ud t
		\leq \frac{1}{4}\int_{S}V u^{s-a}\phi^{b}\ud m\ud t
		+ C(b)\int_{S}V^{-\frac{1-a}{s-1}}|\pt_{t}\phi|^{\frac{s-a}{s-1}} \phi^{b-\frac{s-a}{s-1}} \ud m\ud t, \]
		by the Young's inequality with the exponents $(s-a)/(1-a)$ and $(s-a)/(s-1)$.
		
		Substituting these estimates back into \eqref{eq-inequality-integral-holder-parabolic-temp2} and recalling that $0\leq\phi\leq 1$ and
		\[ b>\max\set{{\frac{s}{s-1}, \frac{ps}{s-p+1}}} \]
		so as to absorb any extra powers of $\phi$, we
		finally arrive at
		\[ \begin{aligned}
			&\frac{a}{2}\int_{S}F^{p}(\Fgrad u)u^{-a-1}\phi^{b}\ud m\ud t
			+\frac{a}{2}\int_{S}F^{q}(\Fgrad u)u^{-a-1}\phi^{b}\ud m\ud t
			+\frac{1}{4}\int_{S}V u^{s-a}\phi^{b}\ud m\ud t \\
			\leq& C(b) a^{-\frac{(p-1)(s-a)}{s-p+1}}\int_{S}F^{\frac{p(s-a)}{s-p+1}}(\Fgrad\phi)V^{-\frac{p-a-1}{s-p+1}}\ud m\ud t+ C(b) a^{-\frac{(q-1)(s-a)}{s-q+1}}\int_{S}F^{\frac{q(s-a)}{s-q+1}}(\Fgrad\phi)V^{-\frac{q-a-1}{s-q+1}}\ud m\ud t \\
			&+ C(b)\int_{S}|\pt_{t}\phi|^{\frac{s-a}{s-1}}V^{-\frac{1-a}{s-1}}\ud m\ud t ,
		\end{aligned} \]
		which is exactly the desired inequality. This completes the proof.
	\end{proof}
	
	In analogy with the elliptic situation, we also need a parabolic H\"{o}lder-type estimate that allows us to relate the integral of $Vu^s$ over a space-time domain $K=E_R$ to that over its complement $S\minus K$ in the later proof.
	
	\begin{lem}\label{lem-integralIdentity-cutFunction-holder-parabolic}
		Let $u$ be a nonnegative weak solution of \eqref{eq-main-PDE-(p,q)-parabolic}. Assume that the constants $a,b>0$ satisfy
		\[ a <\min\set{\frac{1}{2}, q-1, \frac{s-p+1}{2(p-1)}}, \quad b>\max\set{\frac{s}{s-1}, \frac{2ps}{s-p+1}}. \]
		Then there exists a constant $C(b)>0$ such that for any $\phi\in\mathrm{Lip}\sibb{S}$ with compact support and $0\leq\phi\leq 1$, the following estimate holds
		\[ \begin{aligned}
			\int_{S} V u^{s}\phi^{b}\ud m\ud t &\leq C(b)(a^{-1}Q)^{\frac{p-1}{p}}\sibb{\int_{S\minus K} V u^{s}\phi^{b}\ud m\ud t}^{\frac{(a+1)(p-1)}{sp}} J_p^{\frac{s-(a+1)(p-1)}{sp}} \\
			&\quad + C(b)(a^{-1}Q)^{\frac{q-1}{q}}\sibb{\int_{S\minus K} V u^{s}\phi^{b}\ud m\ud t}^{\frac{(a+1)(q-1)}{sq}} J_q^{\frac{s-(a+1)(q-1)}{sq}}\\
			&\quad + C(b)\sibb{\int_{S\minus K} V u^{s}\phi^{b}\ud m\ud t}^{\frac{1}{s}} J_t^{\frac{s-1}{s}},
		\end{aligned} \]
		where $K:=\set{(x,t)\in S : \phi(x,t)=1}$,
		\[ \begin{aligned}
			Q&:=a^{-\frac{(p-1)(s-a)}{s-p+1}}\int_{S} V^{-\frac{p-a-1}{s-p+1}} F^{\frac{p(s-a)}{s-p+1}}(\Fgrad\phi) \ud m \ud t + a^{-\frac{(q-1)(s-a)}{s-q+1}}\int_{S} V^{-\frac{q-a-1}{s-q+1}} F^{\frac{q(s-a)}{s-q+1}}(\Fgrad\phi) \ud m \ud t\\
			&\quad + \int_{S} V^{-\frac{1-a}{s-1}} |\pt_t \phi|^{\frac{s-a}{s-1}} \ud m \ud t,\\
			J_z &:= \int_{S\minus K} V^{-\frac{(a+1)(z-1)}{s-(a+1)(z-1)}} F^{\frac{zs}{s-(a+1)(z-1)}}(\Fgrad\phi) \ud m \ud t
			\text{ for each } z\in\set{p,q},
		\end{aligned} \]
and
\begin{align*}
			J_t &:= \int_{S\minus K} V^{-\frac{1}{s-1}} |\pt_t \phi|^{\frac{s}{s-1}} \ud m \ud t.
\end{align*}
	\end{lem}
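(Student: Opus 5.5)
We follow the proof of Lemma~\ref{lem-integralIdentity-cutFunction-holder} and add the treatment of the time derivative. As before we assume $u>0$ by approximation and take $\psi=\phi^{b}$ in the weak formulation \eqref{eq-weakSol-PDE-(p,q)-parabolic}. Since $\pt_t\psi=b\phi^{b-1}\pt_t\phi$ and the initial term $\int_M u_0\phi^b(x,0)\ud m$ is nonnegative, we can discard the latter. The Cauchy--Schwarz inequality $\ud\phi(\Fgrad u)\le F(\Fgrad\phi)F(\Fgrad u)$ then gives
\[ \int_S Vu^s\phi^b\ud m\ud t\le b\int_S F^{p-1}(\Fgrad u)\phi^{b-1}F(\Fgrad\phi)\ud m\ud t+b\int_S F^{q-1}(\Fgrad u)\phi^{b-1}F(\Fgrad\phi)\ud m\ud t+b\int_S u\,\phi^{b-1}|\pt_t\phi|\ud m\ud t. \]
On $K=\set{\phi=1}$ both $\Fgrad\phi$ and $\pt_t\phi$ vanish a.e., so every integral on the right can be restricted to $S\minus K$.

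The $p$- and $q$-terms are handled exactly as in the elliptic case. First apply H\"older's inequality with exponents $(z,z/(z-1))$, splitting the integrand as $F^{z-1}(\Fgrad u)\phi^{b(z-1)/z}u^{-(a+1)(z-1)/z}$ times $u^{(a+1)(z-1)/z}\phi^{b/z-1}F(\Fgrad\phi)$. The first factor is $\int F^{z}(\Fgrad u)u^{-a-1}\phi^b$. By Lemma~\ref{lem-integralIdentity-cutFunction-young-parabolic} it is bounded by $a^{-1}C(b)Q$; the $\pt_t\phi$ contribution there is exactly the third summand of $Q$, and the hypotheses $a<\min\set{1/2,q-1}$ and $b>\max\set{s/(s-1),ps/(s-p+1)}$ are satisfied. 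For the second factor, $\int_{S\minus K}\phi^{b-z}u^{(a+1)(z-1)}F^{z}(\Fgrad\phi)$, apply H\"older with the pair $\bigl(s/((a+1)(z-1)),\,s/(s-(a+1)(z-1))\bigr)$ against $Vu^s\phi^b$. This produces $\bigl(\int_{S\minus K}Vu^s\phi^b\bigr)^{(a+1)(z-1)/s}$ times an integral that becomes $J_z$ after dropping a nonnegative power of $\phi$. That power is nonnegative because $b>2ps/(s-p+1)\ge zs/(s-(a+1)(z-1))$, using $a<(s-p+1)/(2(p-1))$ and $q\le p$.

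The new term is the time term. Apply H\"older with exponents $(s,s/(s-1))$, writing $u\phi^{b-1}|\pt_t\phi|=(V^{1/s}u\phi^{b/s})\cdot(V^{-1/s}\phi^{b-1-b/s}|\pt_t\phi|)$. This yields
\[ \Bigl(\int_{S\minus K}Vu^s\phi^b\Bigr)^{1/s}\Bigl(\int_{S\minus K}V^{-\frac1{s-1}}\phi^{b-\frac{s}{s-1}}|\pt_t\phi|^{\frac{s}{s-1}}\Bigr)^{\frac{s-1}{s}}. \]
Since $b>s/(s-1)$ and $0\le\phi\le1$, the power of $\phi$ can be dropped, which leaves exactly $J_t^{(s-1)/s}$. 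This term needs no Caccioppoli input and therefore no $Q$ factor, which matches the statement.

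The main obstacle is justifying the test function and the discarding of the initial term. One must check that $\phi^b$ with $\phi$ Lipschitz and compactly supported in $S$ is admissible, and that the sign of the term $\int u\,\pt_t\psi$, which sits on the left side of the weak formulation, is handled by moving it to the right and bounding it by its absolute value. The rest is exponent bookkeeping, which mirrors the elliptic case line by line.
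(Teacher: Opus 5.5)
Your proposal is correct and follows the paper's proof essentially step for step. You test with $\phi^{b}$, drop the nonnegative initial term, and bound the $p$- and $q$-terms by two successive H\"older inequalities combined with Lemma~\ref{lem-integralIdentity-cutFunction-young-parabolic}, then treat the time term by H\"older with exponents $(s,s/(s-1))$ using $b>s/(s-1)$. Your explicit observations that $\pt_t\phi$ also vanishes a.e.\ on $K$, and that the dropped powers of $\phi$ are nonnegative for both $z=p$ and $z=q$, fill in details the paper leaves implicit.
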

	\begin{proof}
		As in the proof of the previous lemmas, we may assume $u>0$. Taking $\phi^{b}$ as the test function in \eqref{eq-weakSol-PDE-(p,q)-parabolic} and using the Cauchy--Schwarz inequality, we obtain
		\begin{equation}\label{eq-parabolic-holder-temp1}
			\begin{aligned}
				\int_{S}Vu^{s}\phi^{b}\ud m\ud t &\leq b\int_{S}F^{p-1}(\Fgrad u)\phi^{b-1}F(\Fgrad\phi)\ud m\ud t+ b\int_{S}F^{q-1}(\Fgrad u)\phi^{b-1}F(\Fgrad\phi)\ud m\ud t \\
				&\quad + b\int_{S}u\phi^{b-1}|\pt_t\phi|\ud m\ud t -\int_{M}u_{0}\phi^{b}(x,0)\ud m .
			\end{aligned}
		\end{equation}
		The last term is nonpositive and can be dropped.
		
		We estimate the $p$-term first. By H\"{o}lder's inequality with exponents $p$ and $p/(p-1)$, we get
		\[ \begin{aligned}
			&b\int_{S}F^{p-1}(\Fgrad u)\phi^{b-1}F(\Fgrad\phi)\ud m\ud t\\
			&\leq b\sibb{\int_{S\minus K}F^{p}(\Fgrad u)\phi^{b}u^{-a-1}\ud m\ud t}^{\frac{p-1}{p}}
			\sibb{\int_{S\minus K}\phi^{b-p}u^{(a+1)(p-1)}F^{p}(\Fgrad\phi)\ud m\ud t}^{\frac{1}{p}}.
		\end{aligned} \]
		Notice that the gradient $\Fgrad\phi$ vanishes on $K$, so the integration can be restricted to $S\minus K$ without affecting the terms involving $\Fgrad\phi$.
		
		Since $a,b$ satisfy the conditions of Lemma~\ref{lem-integralIdentity-cutFunction-young-parabolic}, applying this lemma with the same $\phi$ yields
		\[ \int_{S\minus K}F^{p}(\Fgrad u)\phi^{b}u^{-a-1}\ud m\ud t \leq a^{-1}C(b)Q. \]
		
		For the remaining factor containing $u^{(a+1)(p-1)}$, we use H\"{o}lder's inequality with the pair
		\[ \sibb{\frac{s}{(a+1)(p-1)},\frac s{s-(a+1)(p-1)}}, \]
		which is admissible because $a<(s-p+1)/(2(p-1))$ guarantees $s>(a+1)(p-1)$.  This gives
		\[ \begin{aligned}
			&\sibb{\int_{S\minus K}Vu^{s}\phi^{b}\ud m\ud t}^{\frac{(a+1)(p-1)}{s}}
			\sibb{\int_{S\minus K}V^{-\frac{(a+1)(p-1)}{s-(a+1)(p-1)}}\phi^{b-\frac{ps}{s-(a+1)(p-1)}}F^{\frac{ps}{s-(a+1)(p-1)}}(\Fgrad\phi)\ud m\ud t}^{\frac{s-(a+1)(p-1)}{s}}\\
			&\geq\int_{S\minus K}\phi^{b-p}u^{(a+1)(p-1)}F^{p}(\Fgrad\phi)\ud m\ud t .
		\end{aligned} \]
		Because $b>ps/(s-(a+1)(p-1))$ and $0\leq\phi\leq1$, the extra powers of $\phi$ inside the left-hand integral can be dropped, and the integral becomes exactly $J_{p}$.
		
		Putting these estimates together, we obtain for the $p$-term,
		\[ \begin{aligned}
			b\int_{S}F^{p-1}(\Fgrad u)\phi^{b-1}F(\Fgrad\phi)\ud m\ud t
			\leq C(b)(a^{-1}Q)^{\frac{p-1}{p}}\sibb{\int_{S\minus K}Vu^{s}\phi^{b}\ud m\ud t}^{\frac{(a+1)(p-1)}{sp}} J_{p}^{\frac{s-(a+1)(p-1)}{sp}} .
		\end{aligned} \]
		
		An entirely parallel argument for the $q$-term gives
		\[ \begin{aligned}
			b\int_{S}F^{q-1}(\Fgrad u)\phi^{b-1}F(\Fgrad\phi)\ud m\ud t
			\leq C(b)(a^{-1}Q)^{\frac{q-1}{q}}\sibb{\int_{S\minus K}Vu^{s}\phi^{b}\ud m\ud t}^{\frac{(a+1)(q-1)}{sq}} J_{q}^{\frac{s-(a+1)(q-1)}{sq}} .
		\end{aligned} \]
		
		Finally, for the term containing $\partial_t \phi$, we apply H\"{o}lder's inequality with exponents $(s,s/(s-1))$ to get
		\[ \begin{aligned}
			b\int_{S}u\phi^{b-1}|\pt_t\phi|\ud m\ud t\leq b\sibb{\int_{S\minus K}Vu^{s}\phi^{b}\ud m\ud t}^{\frac{1}{s}}
			\sibb{\int_{S\minus K}V^{-\frac{1}{s-1}}\phi^{b-\frac{s}{s-1}}|\pt_t\phi|^{\frac{s}{s-1}}\ud m\ud t}^{\frac{s-1}{s}}.
		\end{aligned} \]
		Again, the conditions $b>s/(s-1)$ and $0\leq\phi\leq1$ allow us to absorb the remaining powers of $\phi$ into the constant, so the integral becomes $J_{t}$.  Consequently,
		\[ b\int_{S}u\phi^{b-1}|\pt_t\phi|\ud m\ud t
		\leq C(b)\sibb{\int_{S\minus K}Vu^{s}\phi^{b}\ud m\ud t}^{\frac{1}{s}} J_{t}^{\frac{s-1}{s}}. \]
		
		Substituting these estimates into \eqref{eq-parabolic-holder-temp1}, we finish the proof.
	\end{proof}
	
	Finally, the parabolic growth conditions HP4 and HP5 are expressed by integrals over annuli generated by the space-time domains $E_R$. The following two lemmas convert these conditions into estimates over the exterior $S\minus E_R$ weighted by a nonincreasing function $f$, exactly as was done in the elliptic case.
	
	\begin{lem}\label{lem-inequality-decom-HP4}
		Let $f \in C^{0}(\R_{+})$ be a nonincreasing nonnegative function. For $z\in\set{p,q}$, assume that the condition HP4 holds.
		\begin{enumerate}
			\item For any $R$ sufficiently large and any $\epsilon \in (0,\epsilon_0)$,
			\[ \begin{aligned}
				\int_{S \minus E_R}f([r^{\theta_2}(x) +t^{\theta_1}]^{\frac{1}{\theta_2}})t^{(\theta_1-1)
					\sibb{\frac{s}{s-1}-\epsilon }}V^{-\bar{s}_{2} + \epsilon} \ud m \ud t
				\leq C_{1}\int_{R/2^{\frac{1}{\theta_2}}}^{\infty } f(r)r^{\bar{s}_1 +C_0\epsilon -1}\sibb{\ln r}^{s_2} \ud r.
			\end{aligned} \]
			\item For each $z \in \set{p,q}$, any $R$ sufficiently large and any $\epsilon \in (0,\epsilon_0)$,
			\[ \begin{aligned}
				\int_{S \minus E_R}f([r^{\theta_2}(x) +t^{\theta_1}]^{\frac{1}{\theta_2}})r(x)^{(\theta_2-1)z\sibb{\frac{s}{s-z+1}-\epsilon}}V^{-\bar{k}_{z} + \epsilon} \ud m \ud t
				\leq C_{1}\int_{R/2^{\frac{1}{\theta_2}}}^{\infty } f(r)r^{\bar{l}_{z} +C_0\epsilon -1}\sibb{\ln r}^{k_{z}} \ud r.
			\end{aligned} \]
		\end{enumerate}
	\end{lem}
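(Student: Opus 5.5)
The plan is to mirror the proof of Lemma~\ref{lem-inequality-decom-elliptic}, replacing the forward geodesic annuli by the space-time annuli generated by the domains $E_R$. Set
\[ \rho(x,t):=\sibb{r(x)^{\theta_2}+t^{\theta_1}}^{\frac{1}{\theta_2}}, \]
so that $E_R=\set{\rho<R}$. Set also $R_i:=2^{i/\theta_2}R$, so that $R_{i+1}^{\theta_2}=2R_i^{\theta_2}$ and $E_{R_{i+1}}=E_{2^{1/\theta_2}R_i}$. We then decompose
\[ S\minus E_R=\Union_{i=0}^{\infty}\sibb{E_{R_{i+1}}\minus E_{R_i}}, \]
up to null sets. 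On the $i$th shell we have $\rho\ge R_i$. Since $f$ is nonincreasing, $f(\rho)\le f(R_i)$ there, and pulling this out gives
\[ \int_{S\minus E_R}f(\rho)\,w\,V^{-\bar{s}_2+\epsilon}\ud m\ud t\leq\sum_{i\ge0}f(R_i)\int_{E_{R_{i+1}}\minus E_{R_i}}w\,V^{-\bar{s}_2+\epsilon}\ud m\ud t. \]
Here $w$ is the time weight $t^{(\theta_1-1)(\frac{s}{s-1}-\epsilon)}$; for part (2), $w$ is the spatial weight $r^{(\theta_2-1)z(\frac{s}{s-z+1}-\epsilon)}$ together with $V^{-\bar{k}_z+\epsilon}$.

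Next, HP4 is applied with radius $R_i$ (valid because $R_i\ge R$ is large). Each shell integral is then bounded by
\[ C_1R_i^{\bar{s}_1+C_0\epsilon}(\ln R_i)^{s_2}, \]
and by $C_1R_i^{\bar{l}_z+C_0\epsilon}(\ln R_i)^{k_z}$ in part (2). Write $\lambda:=2^{1/\theta_2}\in(1,2]$. The point $R_i$ is the right endpoint of the interval $[R_{i-1},R_i]=[R_i/\lambda,R_i]$, whose length is $(1-\lambda^{-1})R_i$. Using this we convert
\[ R_i^{\bar{s}_1+C_0\epsilon}(\ln R_i)^{s_2}\leq C\,R_{i-1}^{\bar{s}_1+C_0\epsilon-1}(\ln R_{i-1})^{s_2}\,(R_i-R_{i-1}). \]
This needs $(R_i/R_{i-1})^{\bar{s}_1+C_0\epsilon}=\lambda^{\bar{s}_1+C_0\epsilon}\le C$ uniformly for $\epsilon<\epsilon_0$. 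It also needs $\ln R_i\le 2\ln R_{i-1}$ once $R$ is large. Both hold.

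Since $f$ is nonincreasing we have $f(R_i)\le f(r)$ for $r\le R_i$. Moreover, $r^{\bar{s}_1+C_0\epsilon-1}(\ln r)^{s_2}$ is nondecreasing for large $r$, because $\bar{s}_1\ge s/(s-1)>1$; in part (2), $\bar{l}_z>1$ since $s_z>z>1$. Therefore each summand is at most
\[ C\int_{R_{i-1}}^{R_i}f(r)r^{\bar{s}_1+C_0\epsilon-1}(\ln r)^{s_2}\ud r. \]
Summing over $i\ge0$ gives an integral from $R_{-1}=R/2^{1/\theta_2}$ to $\infty$, which is the claimed bound. The constant in front is then $C$ rather than literally $C_1$; this is harmless given the paper's convention on $C$. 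Part (2) is word for word the same with $\bar{l}_z$ and $k_z$ in place of $\bar{s}_1$ and $s_2$.

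The main point to check is that the geometric ratio $R_{i+1}/R_i=2^{1/\theta_2}$ of the shells matches exactly the shells $E_{2^{1/\theta_2}R}\minus E_R$ appearing in HP4, so that HP4 applies shell by shell. The only other check is that the comparison constants from $\lambda^{\bar{s}_1+C_0\epsilon}$ and from the logarithms stay uniform in $\epsilon\in(0,\epsilon_0)$ and in $i$. Both are straightforward, so I expect no real obstacle beyond bookkeeping.
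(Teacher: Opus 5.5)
Your proposal is correct and follows the paper's proof essentially step by step. You use the same decomposition of $S\setminus E_R$ into the shells $E_{2^{(j+1)/\theta_2}R}\setminus E_{2^{j/\theta_2}R}$, evaluate $f$ at the inner radius, apply HP4 shell by shell with radius $2^{j/\theta_2}R$, and compare the resulting sum with $\int_{R/2^{1/\theta_2}}^{\infty}$ by monotonicity. Your explicit handling of the factor $\lambda^{\bar{s}_1+C_0\epsilon}/(\lambda-1)$ and of the logarithm comparison just spells out what the paper calls adjusting the constant implicitly, and you are right that the resulting constant is a generic $C$ rather than literally $C_1$.
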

	\begin{proof}
		We prove the second inequality in detail; the proof of the first one follows by exactly the same reasoning.
		
		Consider a fixed $z\in\set{p,q}$. We decompose the region $S \minus E_R$ into the union of annuli
		\[ S \minus E_R = \Union_{j=0}^{\infty} \sibb{E_{2^{(j+1)/\theta_2}R} \minus E_{2^{j/\theta_2}R}}. \]
		Let $\rho(x,t) = [r^{\theta_2}(x) +t^{\theta_1}]^{1/\theta_2}$. Since $f$ is nonincreasing and $\rho(x,t) \geq 2^{j/\theta_2}R$ on the $j$th annulus, we have
		\begin{align}\label{inequality in lemma2.6}
			&\int_{S \minus E_R} f(\rho) r^{(\theta_2-1)z\sibb{\frac{s}{s-z+1}-\epsilon}} V^{-\bar{k}_z+\epsilon} \ud m \ud t  \nonumber \\
			&= \sum_{j=0}^{\infty} \int_{E_{2^{(j+1)/\theta_2}R} \minus E_{2^{j/\theta_2}R}} f(\rho) r^{(\theta_2-1)z\sibb{\frac{s}{s-z+1}-\epsilon}} V^{-\bar{k}_z+\epsilon} \ud m \ud t  \nonumber \\
			&\leq \sum_{j=0}^{\infty} f\sibb{2^{j/\theta_2}R} \int_{E_{2^{(j+1)/\theta_2}R} \minus E_{2^{j/\theta_2}R}} r^{(\theta_2-1)z\sibb{\frac{s}{s-z+1}-\epsilon}} V^{-\bar{k}_z+\epsilon} \ud m \ud t .
		\end{align}
		
		By the assumption HP4 applied with $R_j = 2^{j/\theta_2}R$ and $\epsilon$, we have
		\[ \begin{aligned}
			\text{RHS of }\eqref{inequality in lemma2.6}
			&\leq C_{1} \sum_{j=0}^{\infty} f\sibb{2^{j/\theta_2}R} \sibb{2^{(j-1)/\theta_2}R}^{\bar{l}_z+C_0\epsilon-1} \sibb{\ln\sibb{2^{(j-1)/\theta_2}R}}^{k_{z}}\Delta r_j ,
		\end{aligned} \]
		where $\Delta r_j := 2^{(j-1)/\theta_2}R \cdot (2^{1/\theta_2}-1)$ and we have adjusted the constant and exponents implicitly.
		
		Since $f$ is nonincreasing and the integrand $r^{\bar{l}_z+C_0\epsilon-1}\sibb{\ln r}^{k_{z}}$ is increasing in $r$, we can bound the sum by the corresponding integral,
		\[ \begin{aligned}
			\sum_{j=0}^{\infty} f\sibb{2^{j/\theta_2}R} \sibb{2^{(j-1)/\theta_2}R}^{\bar{l}_z+C_0\epsilon-1} \sibb{\ln\sibb{2^{(j-1)/\theta_2}R}}^{k_{z}}\Delta r_j
			\leq \int_{R/2^{\frac{1}{\theta_2}}}^{\infty}{f(r) r^{\bar{l}_z+C_0\epsilon-1} \sibb{\ln r}^{k_{z}}} \ud r,
		\end{aligned} \]
		which completes the proof.
	\end{proof}
	
	\begin{lem}\label{lem-inequality-decom-HP5}
		Let $f \in C^{0}(\R_{+})$ be a nonincreasing nonnegative function. For $z\in\set{p,q}$, assume that the condition HP5 holds.
		\begin{enumerate}
			\item For any $R$ sufficiently large and any $\epsilon \in (0,\epsilon_0)$,
			\[ \int_{S \minus E_R} f([r^{\theta_2}(x) +t^{\theta_1}]^{\frac{1}{\theta_2}})t^{(\theta_1-1)\sibb{\frac{s}{s-1}-\epsilon}}V^{-\bar{s}_2 + \epsilon} \ud m \ud t
			\leq C_{1}\int_{R/2^{\frac{1}{\theta_2}}}^{\infty } f(r)r^{\bar{s}_1 +C_0\epsilon -1}\sibb{\ln r}^{\bar{s}_2} \ud r. \]
			\item For each $z \in \set{p,q}$, any $R$ sufficiently large and any $\epsilon \in (0,\epsilon_0)$,
			\[ \begin{aligned}
				\int_{S \minus E_R}f([r^{\theta_2}(x) +t^{\theta_1}]^{\frac{1}{\theta_2}})r(x)^{(\theta_2-1)z\sibb{\frac{s}{s-z+1}-\epsilon}}V^{-\bar{k}_{z} + \epsilon} \ud m \ud t
				\leq C_{1}\int_{R/2^{\frac{1}{\theta_2}}}^{\infty } f(r)r^{\bar{l}_{z} +C_0\epsilon -1}\sibb{\ln r}^{\bar{k}_{z}} \ud r, \\
				\int_{S \minus E_R}f([r^{\theta_2}(x) +t^{\theta_1}]^{\frac{1}{\theta_2}})r(x)^{(\theta_2-1)z\sibb{\frac{s}{s-z+1}+\epsilon }}V^{-\bar{k}_{z} - \epsilon} \ud m \ud t
				\leq C_{1}\int_{R/2^{\frac{1}{\theta_2}}}^{\infty } f(r)r^{\bar{l}_{z} +C_0\epsilon -1}\sibb{\ln r}^{\bar{k}_{z}} \ud r.
			\end{aligned} \]
		\end{enumerate}
	\end{lem}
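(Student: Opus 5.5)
The argument is the one used for Lemma~\ref{lem-inequality-decom-HP4}, since HP5 has the same structure with the logarithmic exponents $s_2$, $k_z$ replaced by the critical ones $\bar{s}_2$, $\bar{k}_z$. One further inequality is added, in which $-\epsilon$ becomes $+\epsilon$. Set $\rho(x,t)=[r^{\theta_2}(x)+t^{\theta_1}]^{1/\theta_2}$. Since $t^{\theta_1}+r^{\theta_2}<R^{\theta_2}$ is equivalent to $\rho<R$, the complement splits into dyadic space-time annuli:
\[
S\minus E_R=\Union_{j=0}^{\infty}\sibb{E_{2^{(j+1)/\theta_2}R}\minus E_{2^{j/\theta_2}R}} .
\]
On the $j$th annulus we have $2^{j/\theta_2}R\le\rho<2^{(j+1)/\theta_2}R$. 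Because $f$ is nonincreasing and nonnegative, $f(\rho)\le f(2^{j/\theta_2}R)$ there. Each of the three integrals is therefore bounded by $\sum_j f(2^{j/\theta_2}R)$ times the integral of the weight (either $t^{(\theta_1-1)(\frac{s}{s-1}-\epsilon)}V^{-\bar{s}_2+\epsilon}$ or $r^{(\theta_2-1)z(\frac{s}{s-z+1}\mp\epsilon)}V^{-\bar{k}_z\pm\epsilon}$) over the annulus.

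Next, each annulus is $E_{2^{1/\theta_2}R_j}\minus E_{R_j}$ with $R_j=2^{j/\theta_2}R$. For $R$ large, HP5 applied with $R_j$ bounds each annulus integral by $C_1R_j^{\gamma+C_0\epsilon}(\ln R_j)^{\kappa}$. Here $(\gamma,\kappa)=(\bar{s}_1,\bar{s}_2)$ in part (1), and $(\gamma,\kappa)=(\bar{l}_z,\bar{k}_z)$ for both inequalities in part (2). The hypothesis was set up with the same right-hand side in both of the $z$-inequalities, so the $+\epsilon$ version needs no separate treatment.

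The remaining step turns the sum into an integral. Write
\[
R_j^{\gamma+C_0\epsilon}(\ln R_j)^{\kappa}=c\,(2^{(j-1)/\theta_2}R)^{\gamma+C_0\epsilon-1}\bigl(\ln (2^{(j-1)/\theta_2}R)\bigr)^{\kappa}\,\Delta r_j,
\]
where $\Delta r_j=(2^{1/\theta_2}-1)2^{(j-1)/\theta_2}R$ is the length of $I_j=[2^{(j-1)/\theta_2}R,2^{j/\theta_2}R]$. The constant $c$ is uniform in $j$ and $\epsilon\in(0,\epsilon_0)$, because the ratio of the logarithms is bounded once $R$ is large and the power of $2^{1/\theta_2}$ is bounded. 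On $I_j$, $f(2^{j/\theta_2}R)\le f(r)$ by monotonicity, and $r^{\gamma+C_0\epsilon-1}(\ln r)^\kappa$ is nondecreasing for $r$ large, since $\gamma\ge 1$ (indeed $\bar{s}_1>\theta_2\ge1$ and $\bar{l}_z=s_z\theta_2>1$) and $\kappa\ge0$. Each summand is therefore at most $C\int_{I_j}f(r)r^{\gamma+C_0\epsilon-1}(\ln r)^\kappa\,\ud r$. The intervals $I_j$ tile $[R/2^{1/\theta_2},\infty)$, which gives the claim with the constant absorbed into $C_1$, as in the statement.

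I expect no real obstacle. The only care needed is bookkeeping: checking that the constants from the index shift $j\mapsto j-1$ and from the ratio of logarithms are uniform in $\epsilon$ and $j$, and that the monotonicity of $r^{\gamma+C_0\epsilon-1}(\ln r)^{\kappa}$ holds for all large $R$. If desired, the two inequalities of part (2) can be proved simultaneously by treating the sign $\pm\epsilon$ as a parameter.
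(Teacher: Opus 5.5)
Your proposal is correct and follows the paper's argument, which simply refers back to the proof of Lemma~\ref{lem-inequality-decom-HP4}. That argument decomposes $S\minus E_R$ into the annuli $E_{2^{(j+1)/\theta_2}R}\minus E_{2^{j/\theta_2}R}$, applies HP5 with $R_j=2^{j/\theta_2}R$, and compares the sum with $\int_{R/2^{1/\theta_2}}^{\infty}$ using the intervals of length $\Delta r_j$, including the index shift $j\mapsto j-1$. As in the paper, the constant you actually get is a multiple of $C_1$ depending on $\theta_2$, $\gamma$, $\kappa$ and $C_0\epsilon_0$, rather than $C_1$ itself; this is harmless for how the lemma is used later.
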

	\begin{proof}
		The proof is almost the same as that of Lemma~\ref{lem-inequality-decom-HP4}, the only difference being the form of the right-hand side in the hypothesis HP5.
	\end{proof}
	
\section{The Proof of Theorem~\ref{thm-main-elliptic}}\label{sec-proof of elliptic}
	We now prove the elliptic Liouville theorem. The three hypotheses HP1--HP3 are treated separately, but they share a common structure. Similarly to \cite{MaMP15,Zhao26a}, we construct a family of cut-off functions with a slowly decaying tail, and combine the Caccioppoli-type (or H\"{o}lder-type) estimates of Section~\ref{sec-pre-inte-elliptic} together with the decomposition lemmas to obtain integral bounds on the solution that will force it to vanish eventually.
\subsection{The case of HP1}\label{sec-the case of HP1}
	We apply the Caccioppoli-type inequality (Lemma~\ref{lem-integralIdentity-cutFunction-young}) with a cut-off function that decays as a small negative power of the distance function in order to force $\int_{M} V u^{s} dm= 0$.
	
	Fix a constant $b$ with $b>ps/\sibb{s-p+1}$. For every sufficiently large $R$, define
	\[ a:=\frac{1}{\ln R}, \]
	so that $a<1/2$ (for instance, for $R> e^{2}$). For each integer $j\geq 2$, consider the cut-off function $\phi_j = \phi \eta_j$, where
	\[ \phi(x) =\case{&1, \qquad  &r(x) \leq R,\\
		&\sibb{\frac{r(x)}{R}}^{-A a}, \qquad  &r(x)>R,}\qquad \eta_j(x)=\case{&1, \qquad &r(x) < jR,\\
		&2-\frac{r(x)}{jR}, \qquad &jR \leq r(x) < 2jR,\\
		&0, \qquad &r(x) \geq 2jR.} \]
	The constant $A$ is chosen large enough so that
	\[ A>\frac{2(C_{0}+s+1)}{qs}, \]
	where $C_{0}$ is the constant appearing in HP1. Since $s>p-1>0$, by taking $R$ sufficiently large we may assume $a<s/2$, and hence
	\[ A>\frac{C_{0}+s+1}{q(s-a)}. \]
	
	For every $k\geq 0$, the triangle inequality gives
	\[ F^{k}(\Fgrad\phi_j) \leq C\sibb{F^{k}(\Fgrad\phi)+\phi^{k} F^{k}(\Fgrad\eta_j)}. \]
	Moreover, $F^{*}(\ud r)=F(\Fgrad r)=1$ almost everywhere implies that
	\[ F(\Fgrad\eta_j)=F^{*}(\ud \eta_j)\leq\case{&0, \qquad &r<jR\text{ or }r\geq 2jR ,\\ &\frac{\finv}{jR},\qquad &jR\leq r<2jR.} \]
	Similarly, we have
	\[ F(\Fgrad\phi)\leq\case{&0, \qquad &r<R,\\ &\finv Aar^{-Aa-1}R^{Aa},\qquad &r\geq R.}  \]
	
	Applying Lemma~\ref{lem-integralIdentity-cutFunction-young} with the cut-off function $\phi_j$ and the constants $a$, $b$, we obtain
	\begin{equation}\label{eq-inequality-integral-HP1-main-temp1}
		\begin{aligned}
			\qint{M}{V u^{s-a} \phi_j^{b}}{m} &\leq C a^{-\frac{(p-1)(s-a)}{s-p+1}} \qint{M}{V^{-\frac{p-a-1}{s-p+1}} F^{\frac{p(s-a)}{s-p+1}}(\Fgrad\phi_j)}{m}\\
			&\quad + C a^{-\frac{(q-1)(s-a)}{s-q+1}}
			\qint{M}{V^{-\frac{q-a-1}{s-q+1}} F^{\frac{q(s-a)}{s-q+1}}(\Fgrad\phi_j)}{m}\\
			&\leq Ca^{-\frac{(p-1)(s-a)}{s-p+1}}\sibb{I_{p,1} + I_{p,2}}+Ca^{-\frac{(q-1)(s-a)}{s-q+1}} \sibb{I_{q,1} + I_{q,2}},
		\end{aligned}
	\end{equation}
	where
	\[ \begin{aligned}
		I_{p,1} &= \qint{M}{V^{-\frac{p-a-1}{s-p+1}} F^{\frac{p(s-a)}{s-p+1}}(\Fgrad\phi)}{m}, &
		I_{p,2} &= \qint{M}{V^{-\frac{p-a-1}{s-p+1}} \phi^{\frac{p(s-a)}{s-p+1}} F^{\frac{p(s-a)}{s-p+1}}(\Fgrad\eta_j)}{m}, \\
		I_{q,1} &= \qint{M}{V^{-\frac{q-a-1}{s-q+1}} F^{\frac{q(s-a)}{s-q+1}}(\Fgrad\phi)}{m}, &
		I_{q,2} &= \qint{M}{V^{-\frac{q-a-1}{s-q+1}} \phi^{\frac{q(s-a)}{s-q+1}} F^{\frac{q(s-a)}{s-q+1}}(\Fgrad\eta_j)}{m}.
	\end{aligned} \]
	
	We now estimate these four integrals, beginning with $I_{p,1}$. From the definition of $\phi$ and the fact that $R^{a}=e$, we get
	\[\begin{aligned}
		I_{p,1}
		&\leq C a^{\frac{p(s-a)}{s-p+1}}
		\qint{M \minus \ball[R]}{V^{-\frac{p-a-1}{s-p+1}} r(x)^{-(A a + 1)\frac{p(s-a)}{s-p+1}}}{m}.
	\end{aligned}\]
	Notice that
\begin{align*}
-\frac{p-a-1}{s-p+1}=-\bar{k}_{p}+\frac{a}{s-p+1}.
\end{align*}
Hence, by Lemma~\ref{lem-inequality-decom-elliptic} (the HP1 case) with $\epsilon=a/\sibb{s-p+1}$,
	\[ I_{p,1}\leq C a^{\frac{p(s-a)}{s-p+1}}\iint{\frac{R}{2}}{\infty}{r^{-(A a + 1)\frac{p(s-a)}{s-p+1}+s_{p}+C_{0}\frac{a}{s-p+1}-1}\sibb{\ln r}^{k_{p}} }{r}, \]
	where $s_{p} = sp/\sibb{s-p+1}$, and $k_{p}<\bar{k}_{p}=\sibb{p-1}/\sibb{s-p+1}$ is the exponent from HP1.
	
	Set
	\[ \alpha=(A a + 1)\frac{p(s-a)}{s-p+1}-s_{p}-C_{0}\frac{a}{s-p+1}. \]
	Because $A>\sibb{C_{0}+s+1}/\sibb{p(s-a)}$, one checks that $\alpha > a > 0$ for all small $a$. The change of variables $t = \alpha \ln r$ for $\alpha>0$ yields
	\[ \begin{aligned}
		I_{p,1} &\leq C a^{\frac{p(s-a)}{s-p+1}}\iint{\alpha\ln\frac{R}{2}}{\infty}{\alpha^{-1}\exp\sibb{\frac{t}{\alpha} \sibb{-(A a + 1)\frac{p(s-a)}{s-p+1}+s_{p}+C_{0}\frac{a}{s-p+1}}} \sibb{\frac{t}{\alpha}}^{k_{p}} }{t}\\
		&\leq C a^{\frac{p(s-a)}{s-p+1}}\iint{0}{\infty}{\alpha^{-1-k_{p}}\ue^{-t} t^{k_{p}}}{t}\leq Ca^{\frac{p(s-a)}{s-p+1}}\alpha^{-1-k_{p}}\leq C a^{\frac{p(s-a)}{s-p+1}-k_{p}-1}.
	\end{aligned} \]
	
	Turning to $I_{p,2}$, we observe that on the annulus $\ball[2jR]\setminus\ball[jR]$ one has $\phi \leq j^{-A a}$ and $F(\nabla\eta_j)\leq \finv/(jR)$.  Consequently,
	\[ \begin{aligned}
		I_{p,2}&\leq j^{-Aa\frac{p(s-a)}{s-p+1}}
		\sibb{\frac{\finv}{jR}}^{\frac{p(s-a)}{s-p+1}}
		\qint{\ball[2jR]\minus\ball[jR]}{V^{-\bar{k}_{p}+\frac{a}{s-p+1}}}{m}\\
		&\leq C j^{-\sibb{Aa+1}\frac{p(s-a)}{s-p+1}} R^{-\frac{p(s-a)}{s-p+1}} (2jR)^{s_{p}+C_{0}\frac{a}{s-p+1} }\sibb{\ln(2jR)}^{k_{p}}.
	\end{aligned} \]
	
	Using $R^{a}=e$, the factor involving $R$ simplifies to a pure constant,
	\[ R^{-\frac{p(s-a)}{s-p+1}+s_{p}+C_{0}\frac{a}{s-p+1}}= R^{\frac{a(p+C_{0})}{s-p+1}}=\exp\sibb{\frac{p+C_{0}}{s-p+1}}. \]
	Thus, we have
	\[ I_{p,2} \leq Cj^{-\alpha}\sibb{\ln(2jR)}^{k_{p}}\leq Cj^{-a}\sibb{\ln(2jR)}^{k_{p}}. \]
	
	The quantities $I_{q,1}$ and $I_{q,2}$ are handled in exactly the same way, simply replacing $p$ by $q$ and $k_{p}$ by $k_{q}$ ($<\bar{k}_{q}$) throughout.  We obtain
	\[ I_{q,1}\leq Ca^{\frac{q(s-a)}{s-q+1}-k_{q}-1},\quad I_{q,2}\leq Cj^{-a}\sibb{\ln(2jR)}^{k_{q}}. \]
	
	Now we insert these bounds into \eqref{eq-inequality-integral-HP1-main-temp1} to get
	\[ \begin{aligned}
		\qint{M}{V u^{s-a}\phi_{j}^{b}}{m}&\leq C\sibb{a^{-\frac{(p-1)(s-a)}{s-p+1}}j^{-a}\sibb{\ln(2jR)}^{k_{p}}+a^{-\frac{(q-1)(s-a)}{s-q+1}}j^{-a}\sibb{\ln(2jR)}^{k_{q}}} \\
		&\quad + C\sibb{a^{-\frac{a}{s-p+1}+\bar{k}_{p}-k_{p}}+a^{-\frac{a}{s-q+1}+\bar{k}_{q}-k_{q}}}.
	\end{aligned} \]
	
	Letting $j \to \infty$, we obtain
	\[ \qint{\ball[R]}{V u^{s-a}}{m}
	\leq C\sibb{a^{-\frac{a}{s-p+1}+\bar{k}_{p}-k_{p}}+a^{-\frac{a}{s-q+1}+\bar{k}_{q}-k_{q}}}. \]
	
	Because of $k_z<\bar{k}_z$, $z\in\set{p,q}$, the exponents of $a$ are positive if $a$ is small enough. Fix $b>ps/\sibb{s-p+1}$ and let $R \to \infty$. The right-hand side tends to zero, and by Fatou's lemma,
	\[ \qint{M}{V u^{s}}{m} = 0. \]
	Since $V>0$ almost everywhere and $u\geq 0$, we conclude $u = 0$ a.e. in $M$.
	
\subsection{The case of HP2}\label{sec-the case of HP2}
	In this case, we need the sharper H\"{o}lder-type estimate (Lemma~\ref{lem-integralIdentity-cutFunction-holder}) instead of the Caccioppoli-type inequality. The proof first shows that $Vu^s$ is globally integrable, and then uses the fact that the integral over the complement $M\minus B_R$ tends to zero to force that $\int_M V u^sdm=0$.
	
	As in the HP1 case, we fix a constant $b$ with $b>2ps/\sibb{s-p+1}$. For all sufficiently large $R$, the corresponding $a:=1/\ln R$ satisfies the conditions required in Lemma~\ref{lem-integralIdentity-cutFunction-holder}, so the lemma applies with this fixed $b$ and the varying $a$.
	
	Define the cut-off function $\phi_j=\phi\eta_j$ exactly as in Section~\ref{sec-the case of HP1}.
	The exponent $A$ is now chosen large enough such that
	\[ A>\max_{z\in\set{p,q}}\set{\frac{2\sibb{C_{0}+s+1}}{zs},\frac{2\sibb{s+1}}{zs}+\frac{4C_{0}(z-1)}{z(s-z+1)}}. \]
	Since $s>p-1>0$, by taking $R$ sufficiently large we may assume $a<\min\set{(s-p+1)/(2(p-1)),1/2,s/2}$. Then we have
	\[ A>\max_{z\in\set{p,q}}\set{\frac{C_{0}+s+1}{z(s-a)},\frac{s+1}{z(s-a)}+\frac{C_{0}s(z-1)}{z(s-a)(s-(a+1)(z-1))}}. \]
	
	With this choice, the parameters $a,b$ satisfy the hypotheses of Lemma~\ref{lem-integralIdentity-cutFunction-holder} as long as $R$ is so large that
\begin{align*}
a< \min\set{\frac{1}{2},q-1,\frac{s-p+1}{2(p-1)}}.
\end{align*}
Applying that lemma with $\phi_j$ gives
	\begin{equation}\label{eq-inequality-integral-HP2-main-temp1}
		\begin{aligned}
			\qint{M}{V u^{s}\phi_j^{b}}{m} &\leq C\sibb{a^{-1} Q_j}^{\frac{p-1}{p}} \sibb{\qint{M\minus K}{V u^{s} \phi_j^{b}}{m}}^{\frac{(a+1)(p-1)}{sp}} J_{p}^{\frac{s-(a+1)(p-1)}{sp}} \\
			&\quad+C \sibb{a^{-1} Q_j}^{\frac{q-1}{q}}
			\sibb{\qint{M\minus K}{V u^{s} \phi_j^{b}}{m}}^{\frac{(a+1)(q-1)}{sq}} J_{q}^{\frac{s-(a+1)(q-1)}{sq}},
		\end{aligned}
	\end{equation}
	where $K=\set{x : \phi_j(x)=1}$,
	\[ Q_{j}:=a^{-\frac{(p-1)(s-a)}{s-p+1}} \qint{M}{V^{-\frac{p-a-1}{s-p+1}} F^{\frac{p(s-a)}{s-p+1}}(\Fgrad \phi_{j})}{m}
	+a^{-\frac{(q-1)(s-a)}{s-q+1}} \qint{M}{V^{-\frac{q-a-1}{s-q+1}} F^{\frac{q(s-a)}{s-q+1}}(\Fgrad \phi_{j})}{m}, \]
	\[ J_p:=\qint{M\minus K}{V^{-\frac{(a+1)(p-1)}{s-(a+1)(p-1)}}F^{\frac{ps}{s-(a+1)(p-1)}}(\Fgrad\phi_{j})}{m}, \]
and
\begin{align*}
J_q:=\qint{M\minus K}{V^{-\frac{(a+1)(q-1)}{s-(a+1)(q-1)}}F^{\frac{qs}{s-(a+1)(q-1)}}(\Fgrad\phi_{j})}{m}.
\end{align*}
	
	First, we estimate the quantity $Q_{j}$. Since $A>\sibb{C_{0}+s+1}/\sibb{p(s-a)}$, by comparing with the estimates for $I_{p,1},I_{p,2}$ in the HP1 case, the only difference now is to use the HP2 case of Lemma~\ref{lem-inequality-decom-elliptic} (the inequalities with $\bar{k}_{z}$ instead of $k_{z}$). Repeating the same computation, we obtain
	\begin{equation}\label{eq-inequality-integral-HP2-Q_j-temp1}
		\begin{aligned}
			Q_{j}&\leq C\sibb{a^{-\frac{(p-1)(s-a)}{s-p+1}}j^{-a}\sibb{\ln(2jR)}^{\bar{k}_{p}}+a^{-\frac{(q-1)(s-a)}{s-q+1}}j^{-a}\sibb{\ln(2jR)}^{\bar{k}_{q}}} \\
			&\quad + C\sibb{a^{-\frac{a}{s-p+1}}+a^{-\frac{a}{s-q+1}}}.
		\end{aligned}
	\end{equation}
	Since $0<a<1$, we have $\ue^{-1/\ue}\leq a^{a}< 1$. The inequality~\eqref{eq-inequality-integral-HP2-Q_j-temp1} implies
	\[ Q_{j}\leq C+C\sibb{a^{-\frac{(p-1)(s-a)}{s-p+1}}j^{-a}\sibb{\ln(2jR)}^{\bar{k}_{p}}+a^{-\frac{(q-1)(s-a)}{s-q+1}}j^{-a}\sibb{\ln(2jR)}^{\bar{k}_{q}}}. \]
	Moreover, the terms containing $j$ vanish as $j\to\infty$.  Hence
	\[ \limsup_{j\to\infty} Q_j \leq C. \]
	
	Next, we estimate $J_{p}$ and $J_{q}$. For $J_{p}$, we split it by use of the pointwise bound
	\[F^{k}(\Fgrad\phi_j) \leq C\sibb{F^{k}(\Fgrad\phi) + \phi^{k}F^{k}(\Fgrad\eta_j)},\]
	with $k>0$. From the explicit expression of $\phi$ and the fact that $R^{a}=e$, we then obtain
	\[ J_{p}\leq C(J_{p,1}+J_{p,2}), \]
	where
\begin{align*}
		J_{p,1}&=\qint{M}{V^{-\frac{(a+1)(p-1)}{s-(a+1)(p-1)}} F^{\frac{ps}{s-(a+1)(p-1)}}(\Fgrad\phi)}{m}\\
		&\leq Ca^{\frac{ps}{s-(a+1)(p-1)}}\qint{M\minus\ball[R]}{V^{-\frac{(a+1)(p-1)}{s-(a+1)(p-1)}}r^{-(Aa+1)\frac{ps}{s-(a+1)(p-1)}}}{m},
\end{align*}
and
\begin{align*}
J_{p,2}&=\qint{M}{V^{-\frac{(a+1)(p-1)}{s-(a+1)(p-1)}} \phi^{\frac{ps}{s-(a+1)(p-1)}}F^{\frac{ps}{s-(a+1)(p-1)}}(\Fgrad\eta_j)}{m}.
\end{align*}
	
	Set
	\[ \epsilon_{p} := \frac{sa(p-1)}{(s-p+1)(s-(p-1)(a+1))}. \]
	Applying the HP2 case of Lemma~\ref{lem-inequality-decom-elliptic} with $\epsilon$ replaced by $\epsilon_{p}$ yields
	\[ \begin{aligned}
		J_{p,1}&\leq Ca^{\frac{ps}{s-(a+1)(p-1)}}\iint{\frac{R}{2}}{\infty}{r^{-(Aa+1)\frac{ps}{s-(a+1)(p-1)}+s_{p}+C_{0}\epsilon_{p}-1}\sibb{\ln r}^{\bar{k}_{p}}}{r}.
	\end{aligned} \]
	
	Let
	\[ \alpha=(Aa+1)\frac{ps}{s-(a+1)(p-1)}-s_{p}-C_{0}\epsilon_{p}. \]
	Thanks to the choice of $A$, we have
	\[ \alpha> (Aa+1)\frac{p(s-a)}{s-p+1}-s_{p}-C_{0}\epsilon_{p}>a>0. \]
	Using the change of variables $t = \alpha\ln r$, we get
	\[ \begin{aligned}
		J_{p,1} &\leq Ca^{\frac{ps}{s-(a+1)(p-1)}}\iint{\alpha\ln\frac{R}{2}}{\infty}{\alpha^{-1}\exp\sibb{\frac{t}{\alpha} \sibb{-(Aa+1)\frac{ps}{s-(a+1)(p-1)}+s_{p}+C_{0}\epsilon_{p}}} \sibb{\frac{t}{\alpha}}^{\bar{k}_{p}}}{t}\\
		&\leq Ca^{\frac{ps}{s-(a+1)(p-1)}}\iint{0}{\infty}{\alpha^{-1-\bar{k}_{p}}\ue^{-t}t^{\bar{k}_{p}}}{t}\\
		&\leq Ca^{\frac{ps}{s-(a+1)(p-1)}-\bar{k}_{p}-1}.
	\end{aligned} \]
	
	For $J_{p,2}$, on $\ball[2jR]\minus\ball[jR]$ we have $\phi\leq j^{-Aa}$ and $F(\Fgrad\eta_j) \leq \finv/(jR)$.  Hence, by HP2,
	\[ \begin{aligned}
		J_{p,2}&\leq C\qint{\ball[2jR]\minus\ball[jR]}{V^{-\frac{(a+1)(p-1)}{s-(a+1)(p-1)}} j^{-Aa\frac{ps}{s-(a+1)(p-1)}}\sibb{\frac{\finv}{jR}}^{\frac{ps}{s-(a+1)(p-1)}}}{m}\\
		&\leq C j^{-(Aa+1)\frac{ps}{s-(a+1)(p-1)}}R^{-\frac{ps}{s-(a+1)(p-1)}}(2jR)^{s_{p}+C_{0}\epsilon_{p}} \sibb{\ln(2jR)}^{\bar{k}_{p}}.
	\end{aligned} \]
	Using $R^{a}=e$ again, the powers of $R$ combine to
	\[ R^{-\frac{ps}{s-(a+1)(p-1)}+s_{p}+C_{0}\epsilon_{p}}=R^{\frac{(C_{0}-p)sa(p-1)}{(s-p+1)(s-(p-1)(a+1))}}\leq C. \]
	Thus,
	\[ J_{p,2}\leq Cj^{-a}\sibb{\ln(2jR)}^{\bar{k}_{p}}, \]
	where we used
\begin{align*}
\alpha=\frac{ps(Aa+1)}{s-(a+1)(p-1)}-s_{p}-C_{0}\epsilon_{p}>a.
\end{align*}
	
	Combining the two parts, we obtain
	\[ J_{p}\leq Ca^{\frac{ps}{s-(a+1)(p-1)}-\bar{k}_{p}-1}+Cj^{-a}\sibb{\ln(2jR)}^{\bar{k}_{p}}. \]
	Exactly the same reasoning for the $q$-term gives
	\[ J_{q}\leq Ca^{\frac{qs}{s-(a+1)(q-1)}-\bar{k}_{q}-1}+Cj^{-a}\sibb{\ln(2jR)}^{\bar{k}_{q}}. \]
	
	Inserting the estimates for $J_{p}$, $J_{q}$, and $Q_{j}$ into \eqref{eq-inequality-integral-HP2-main-temp1} yields
	\begin{equation}\label{eq-inequality-integral-HP2-temp2}
		\begin{aligned}
			\qint{M}{V u^{s}\phi_j^{b}}{m} &\leq C\sibb{a^{\frac{a (p-1)}{p (s-p+1)}}+a^{-\frac{p-1}{p}}j^{-\chi_{p}}\sibb{\ln(2jR)}^{\delta_{p}}} \sibb{\qint{M\minus K}{V u^{s} \phi_j^{b}}{m}}^{\frac{(a+1)(p-1)}{sp}}\\
			&\quad+C\sibb{a^{\frac{a (q-1)}{q(s-q+1)}}+a^{-\frac{q-1}{q}}j^{-\chi_{q}}\sibb{\ln(2jR)}^{\delta_{q}}}
			\sibb{\qint{M\minus K}{V u^{s} \phi_j^{b}}{m}}^{\frac{(a+1)(q-1)}{sq}},
		\end{aligned}
	\end{equation}
	where for each $z\in\set{p,q}$, we define
	\[ \delta_{z}:=\bar{k}_{z}\frac{s-(a+1)(z-1)}{sz},\]
and
\[\chi_{z}:=\frac{a(s-(a+1)(z-1))}{sz}. \]
	
	The integral inequality \eqref{eq-inequality-integral-HP2-temp2} remains valid (possibly with a different constant $C$) if we replace the three integrals with $1+\int_M V u^{s}\phi_j^{b}dm$. Now set
	\[ \gamma = \max\set{\frac{(a+1)(p-1)}{sp},\frac{(a+1)(q-1)}{sq}}>0. \]
	If $a$ is small enough, then $\gamma< 1$. This fact combined with $V u^{s}\in L_{\mathrm{loc}}^{1}(M)$ implies
	\[ \begin{aligned}
		\sibb{1+\qint{M}{V u^{s}\phi_j^{b}}{m}}^{1-\gamma}&\leq C\sibb{a^{\frac{a(p-1)}{p (s-p+1)}}+a^{-\frac{p-1}{p}}j^{-\chi_p}\sibb{\ln(2jR)}^{\delta_{p}}}\\
		&\quad+C\sibb{a^{\frac{a(q-1)}{q(s-q+1)}}+a^{-\frac{q-1}{q}}j^{-\chi_q}\sibb{\ln(2jR)}^{\delta_{q}}}.
	\end{aligned} \]
	Letting $j\to\infty$ and using that $\chi_{p}>0$, $\chi_{q}>0$ for all sufficiently large $R$, we deduce, for any sufficiently large $R$,
	\[ \qint{\ball[R]}{V u^{s}}{m}\leq C. \]
	Thus, we may conclude that $V u^{s}$ is integrable in $M$.
	
	Letting $j\to\infty$ in the inequality~\eqref{eq-inequality-integral-HP2-temp2}, we deduce (note that $K = \ball[R]$ for every $j$)
	\begin{equation}\label{eq-inequality-integral-HP2-temp3}
		\qint{\ball[R]}{V u^{s}}{m}
		\leq C \sibb{\qint{M\minus\ball[R]}{V u^{s}}{m}}^{\frac{(a+1)(p-1)}{sp}}+C \sibb{\qint{M\minus\ball[R]}{V u^{s}}{m}}^{\frac{(a+1)(q-1)}{sq}}.
	\end{equation}
	
	Finally, let $R\to\infty$ in \eqref{eq-inequality-integral-HP2-temp3}. The integral over $M\minus\ball[R]$ tends to zero because the whole integral $\int_M V u^{s}$ is finite. The right-hand side therefore tends to zero, while the left-hand side increases to $\int_{M} V u^{s}$. Hence
	\[ \qint{M}{V u^{s}}{m} = 0,\]
	which forces $u=0$ a.e. in $M$.
	
\subsection{The case of HP3}
	The proof follows the same strategy as in HP1, but now the hypothesis HP3 provides an extra exponentially decaying factor in the integral estimates.
	
	As in the previous cases, we fix a constant $b$ with $b>ps/\sibb{s-p+1}$ and, for all large $R$, set
	\[ a := \frac{1}{\ln R}. \]
	The cut-off function is exactly the same used in the cases of HP1 and HP2, $\phi_j(x) := \phi(x) \eta_j(x)$. We choose the constant $A$ sufficiently large so that
	\[ A>\frac{2\sibb{C_{0}+s+1}}{qs}, \]
	where $C_0$ is taken from the condition HP3. Since $s>p-1>0$, by taking $R$ sufficiently large we may assume $a<s/2$, and hence
	\[ A>\frac{C_{0}+s+1}{q(s-a)}. \]
	
	Applying Lemma~\ref{lem-integralIdentity-cutFunction-young} with the cut-off function $\phi_j$ and the constants $a,b$, we obtain
	\begin{equation}\label{eq-inequality-integral-HP3-main-temp1}
		\begin{aligned}
			\qint{M}{V u^{s-a} \phi_j^{b}}{m}&\leq C a^{-\frac{(p-1)(s-a)}{s-p+1}} \qint{M}{V^{-\frac{p-a-1}{s-p+1}} F^{\frac{p(s-a)}{s-p+1}}(\Fgrad\phi_j)}{m} \\
			&\quad + Ca^{-\frac{(q-1)(s-a)}{s-q+1}} \qint{M}{V^{-\frac{q-a-1}{s-q+1}} F^{\frac{q(s-a)}{s-q+1}}(\Fgrad\phi_j)}{m} \\
			&\leq Ca^{-\frac{(p-1)(s-a)}{s-p+1}}\sibb{I_{p,1} + I_{p,2}} + Ca^{-\frac{(q-1)(s-a)}{s-q+1}}\sibb{I_{q,1} + I_{q,2}},
		\end{aligned}
	\end{equation}
	where $I_{p,1}, I_{p,2}, I_{q,1}$, $I_{q,2}$ are defined exactly as in Section~\ref{sec-the case of HP1}.
	
	Let us first estimate $I_{p,1}$. From the definition of $\phi$, the relation $R^a = e$, and Lemma~\ref{lem-inequality-decom-elliptic} with
	\[ \epsilon_{p} = \frac{a}{s-p+1}=-\frac{p-a-1}{s-p+1}+\frac{p-1}{s-p+1}, \]
	we get
	\[ \begin{aligned}
		I_{p,1}&\leq C a^{\frac{p(s-a)}{s-p+1}}
		\qint{M \minus \ball[R]}{V^{-\bar{k}_{p}+\frac{a}{s-p+1}} r(x)^{-(Aa+1)\frac{p(s-a)}{s-p+1}}}{m} \\
		&\leq C a^{\frac{p(s-a)}{s-p+1}}
		\iint{\frac{R}{2}}{\infty}{r^{-(Aa+1)\frac{p(s-a)}{s-p+1}+s_{p}+C_{0}\frac{a}{s-p+1}-1}
			(\ln r)^{k} \ue^{-\frac{a\theta}{s-p+1}(\ln r)^{\tau_{p}}}}{r}.
	\end{aligned} \]
	Note that
\begin{align*}
A&>\frac{C_{0}+s+1}{q(s-a)}\geq\frac{C_{0}+s+1}{p(s-a)}
\end{align*}
implies
	\[ -(Aa+1)\frac{p(s-a)}{s-p+1}+s_{p}+C_{0}\frac{a}{s-p+1}<-a<0. \]
	Setting
	\[ t = \sibb{\frac{a\theta}{s-p+1}}^{\frac{1}{\tau_{p}}}\ln r, \]
	we transform the integral into
	\[ \begin{aligned}
		I_{p,1} &\leq C a^{\frac{p(s-a)}{s-p+1}}
		\iint{0}{\infty}{\exp\left(\frac{t}{\sibb{\frac{a\theta}{s-p+1}}^{\frac{1}{\tau_{p}}}}
				\sibb{-(Aa+1)\tfrac{p(s-a)}{s-p+1}+s_{p}+C_{0}\tfrac{a}{s-p+1}}\right)
			\frac{t^{k}}{\sibb{\frac{a\theta}{s-p+1}}^{\frac{k+1}{\tau_{p}}}}\ue^{-t^{\tau_{p}}}}{t}\\
		&\leq Ca^{\frac{p(s-a)}{s-p+1}}
		\iint{0}{\infty}{t^{k} \sibb{\frac{a\theta}{s-p+1}}^{-\frac{k+1}{\tau_{p}}} \ue^{-t^{\tau_{p}}}}{t}\\
		&\leq Ca^{\frac{p(s-a)}{s-p+1}-\frac{k+1}{\tau_{p}}}.
	\end{aligned} \]
	
	The term $I_{p,2}$ is handled exactly as in the HP1 case. On $\ball[2jR]\minus\ball[jR]$, we have $\phi \leq j^{-Aa}$ and $F(\nabla\eta_j) \leq \finv/\sibb{jR}$. Using HP3 with the same $\epsilon = a/\sibb{s-p+1}$, we get
	\[ \begin{aligned}
		I_{p,2}&\leq j^{-Aa\frac{p(s-a)}{s-p+1}} \sibb{\frac{\finv}{jR}}^{\frac{p(s-a)}{s-p+1}}
		\qint{\ball[2jR]\minus\ball[jR]}{V^{-\bar{k}_{p}+\frac{a}{s-p+1}}}{m} \\
		&\leq C j^{-(Aa+1)\frac{p(s-a)}{s-p+1}} R^{-\frac{p(s-a)}{s-p+1}}
		(2jR)^{s_{p}+C_{0}\frac{a}{s-p+1}} (\ln(2jR))^{k}
		\ue^{-\frac{a\theta}{s-p+1}(\ln(2jR))^{\tau_{p}}}.
	\end{aligned} \]
	Using $R^a = e$ and collecting the powers of $j$, we obtain
	\[ I_{p,2} \leq C j^{-\alpha}R^{\frac{a(C_{0}+p)}{s-p+1}} (\ln(2jR))^{k}\leq C j^{-a}(\ln(2jR))^{k}, \]
	where $$\alpha=(Aa+1)\frac{p(s-a)}{s-p+1} - s_{p} - C_{0}\frac{a}{s-p+1}>a> 0$$ for the choice of $A$.
	
	Completely analogous arguments give
	\[ I_{q,1}\leq C a^{\frac{q(s-a)}{s-q+1}-\frac{k+1}{\tau_{q}}}, \quad  I_{q,2}\leq C j^{-a} (\ln(2jR))^{k}. \]
	
	Substituting these estimates into \eqref{eq-inequality-integral-HP3-main-temp1} yields
	\[ \begin{aligned}
		\qint{M}{V u^{s-a} \phi_j^{b}}{m}
		&\leq C\sibb{a^{-\frac{(p-1)(s-a)}{s-p+1}}I_{p,1}+a^{-\frac{(p-1)(s-a)}{s-p+1}}I_{p,2}+a^{-\frac{(q-1)(s-a)}{s-q+1}} I_{q,1}+a^{-\frac{(q-1)(s-a)}{s-q+1}}I_{q,2}} \\
		&\leq C\sibb{a^{\frac{s-a}{s-p+1}-\frac{k+1}{\tau_{p}}}+j^{-a} a^{-\frac{(p-1)(s-a)}{s-p+1}}\sibb{\ln(2jR)}^{k}}\\
		&\quad +C\sibb{a^{\frac{s-a}{s-q+1}-\frac{k+1}{\tau_{q}}}+j^{-a} a^{-\frac{(q-1)(s-a)}{s-q+1}}\sibb{\ln(2jR)}^{k}}.
	\end{aligned} \]
	
	Letting $j\to\infty$, we see that the terms containing $j$ vanish because $a > 0$. Observing that $\phi_j = 1$ on $\ball[R]$, we get
\begin{align}\label{eq-3.7}
\qint{\ball[R]}{V u^{s-a}}{m}
	\leq C\sibb{a^{\frac{s-a}{s-p+1}-\frac{k+1}{\tau_{p}}}+a^{\frac{s-a}{s-q+1}-\frac{k+1}{\tau_{q}}}}.
\end{align}
Since $\tau_z>\sibb{s-z+1}(k+1)/s$ for $z=p,q$ by HP3, we have
	\[ \frac{s}{s-z+1}-\frac{k+1}{\tau_z}>0.\]
	Because $a = 1/\ln R \to 0^+$ as $R\to\infty$, the exponent
\begin{align*}
\frac{s-a}{s-z+1}-\frac{k+1}{\tau_z}
\end{align*}
converges to the positive limit above. Hence, it is strictly positive for any sufficiently large $R$.
	
	Finally, let $R\to\infty$ so that $a\to 0^{+}$. The right-hand side of \eqref{eq-3.7} tends to zero, and so
	\[ \qint{M}{V u^{s}}{m} = 0, \]
	which implies that $u = 0$ a.e. on $M$.

\section{The Proof of Theorem~\ref{thm-main-parabolic}}\label{sec-proof of parabolic}
	The proof in the parabolic case follows a similar strategy. We construct suitable cut-off functions involving both spatial and temporal variables, apply the parabolic Caccioppoli-type or H\"{o}lder-type estimates from Section~\ref{sec-pre-inte-parabolic}, and then combine them with the decomposition lemmas corresponding to HP4 or HP5.
\subsection{The case of HP4}
	The proof follows the same strategy as in HP1, but with an extra time derivative term that can be handled well in the parabolic Caccioppoli-type inequality (Lemma~\ref{lem-integralIdentity-cutFunction-young-parabolic}).
	
	First we choose a constant $b$ independent of $R$ such that
	\[ b>\max\set{\frac{s}{s-1},\frac{sp}{s-p+1}}, \]
	and let $a:=1/\ln R$ for every sufficiently large $R$. Hence, the assumptions of Lemma~\ref{lem-integralIdentity-cutFunction-young-parabolic} are satisfied with this fixed $b$ and the varying $a$, and the constant $C(b)$ is an absolute constant.
	
	For each integer $j\geq 2$, define $\phi_j=\phi\eta_j$ with
	\[ \phi(x,t)=\case{&1,\quad &\text{if}\quad &(x,t)\in E_R,\\
		&\sibb{\frac{r(x)^{\theta_2} + t^{\theta_1}}{R^{\theta_2}}}^{-A a},\quad &\text{if}\quad &(x,t)\in S\minus E_R,} \]
and
	\[ \eta_j(x,t)=\case{&1,\quad &\text{if}\quad &(x,t)\in E_{jR},\\
		&2-\frac{r(x)^{\theta_2} + t^{\theta_1}}{(jR)^{\theta_2}},\quad &\text{if}\quad &(x,t)\in E_{2^{\frac{1}{\theta_2}}jR} \minus E_{jR},\\
		& 0,\quad &\text{if}\quad &(x,t)\in S\minus E_{2^{\frac{1}{\theta_2}}jR}.} \]
	The constant $A$ is chosen sufficiently large such that
	\[ A>\max_{z\in\set{p,q}}\set{\frac{2\sibb{\theta_2 z+C_0 +1}}{\theta_2zs}, \frac{2\sibb{\theta_2+C_0 +1}}{\theta_2s}}, \]
	where $C_{0}$ is the constant appearing in HP4. Since $a<1/2$ and $s>1$, we have
	\[ A>\max_{z\in\set{p,q}}\set{\frac{\theta_2 z+C_0 +1}{\theta_2z(s-a)}, \frac{\theta_2+C_0 +1}{\theta_2(s-a)}}. \]
	
	As in the elliptic case, choosing $\phi_j$ as the test function in Lemma~\ref{lem-integralIdentity-cutFunction-young-parabolic} yields
	\begin{equation}\label{eq-inequality-integral-HP4-main-temp1}
		\begin{aligned}
			\int_{S} V u^{-a+s}\phi_j^{b} \ud m\ud t & \leq Ca^{-\frac{(p-1)(s-a)}{s-p+1}}\int_{S} F(\Fgrad \phi_j)^{\frac{p(s-a)}{s-p+1}} V^{-\frac{p-a-1}{s-p+1}} \ud m \ud t\\
			&\quad+Ca^{-\frac{(q-1)(s-a)}{s-q+1}}\int_{S}  F(\Fgrad \phi_j)^{\frac{q(s-a)}{s-q+1}} V^{-\frac{q-a-1}{s-q+1}} \ud m \ud t\\
			&\quad+C\int_{S}{\norm{\pt_{t}\phi_j}}^{\frac{s-a}{s-1}} V^{-\frac{1-a}{s-1}} \ud m \ud t\\
			&\leq C(I_{p,1} + I_{p,2} + I_{q,1} + I_{q,2}+I_{t,1} + I_{t,2}),
		\end{aligned}
	\end{equation}
	where
	\[ \begin{aligned}
		I_{p,1}&=a^{-\frac{(p-1)(s-a)}{s-p+1}}\int_{S\minus E_R} F^{\frac{p(s-a)}{s-p+1}}(\Fgrad\phi) V^{-\frac{p-a-1}{s-p+1}}\ud m\ud t,\\
		I_{p,2}&=a^{-\frac{(p-1)(s-a)}{s-p+1}}\int_{E_{2^{\frac{1}{\theta_2}}jR}\minus E_{jR}} \phi^{\frac{p(s-a)}{s-p+1}} F^{\frac{p(s-a)}{s-p+1}}(\Fgrad\eta_j) V^{-\frac{p-a-1}{s-p+1}}\ud m\ud t,\\
		I_{q,1}&=a^{-\frac{(q-1)(s-a)}{s-q+1}}\int_{S\minus E_R} F^{\frac{q(s-a)}{s-q+1}}(\Fgrad\phi) V^{-\frac{q-a-1}{s-q+1}}\ud m\ud t,\\
		I_{q,2}&=a^{-\frac{(q-1)(s-a)}{s-q+1}}\int_{E_{2^{\frac{1}{\theta_2}}jR}\minus E_{jR}} \phi^{\frac{q(s-a)}{s-q+1}} F^{\frac{q(s-a)}{s-q+1}}(\Fgrad\eta_j) V^{-\frac{q-a-1}{s-q+1}}\ud m\ud t,\\
		I_{t,1}&=\int_{S\minus E_R}\norm{\pt_t\phi}^{\frac{s-a}{s-1}} V^{-\frac{1-a}{s-1}}\ud m\ud t,
	\end{aligned} \]
and
\begin{align*}
I_{t,2}&=\int_{E_{2^{\frac{1}{\theta_2}}jR}\minus E_{jR}} \phi^{\frac{s-a}{s-1}}\norm{\pt_t\eta_j}^{\frac{s-a}{s-1}} V^{-\frac{1-a}{s-1}}\ud m\ud t.
\end{align*}
	
	We now estimate these integrals, beginning with $I_{p,1}$. From the definition of $\phi$ and the fact that $R^{a}=e$, we have, for almost every $(x,t)\in S$,
	\[ F(\Fgrad\phi) \leq C\finv Aa\theta_2\sibb{\frac{r(x)^{\theta_2} +t^{\theta_1}}{R^{\theta_2}}}^{-Aa-1}\frac{r(x)^{\theta_2-1}}{R^{\theta_2}}
	\leq Ca\sibb{r(x)^{\theta_2}+ t^{\theta_1}}^{-A a-1}r(x)^{\theta_2-1}, \]
	which implies
	\[ \begin{aligned}
		I_{p,1} &\leq Ca^{-\frac{(p-1)(s-a)}{s-p+1}}\int_{S \minus E_R}\sibb{ a(r(x)^{\theta_2}+ t^{\theta_1})^{-A a-1}r(x)^{\theta_2-1} }^{\frac{p(s-a)}{s-p+1}}V^{-\frac{p-a-1}{s-p+1}} \ud m\ud t\\
		&\leq Ca^{\frac{p(s-a)-(p-1)(s-a)}{s-p+1}}\int_{S \minus E_R}\sibb{r(x)^{\theta_2}+ t^{\theta_1}}^{-(A a+1)\frac{p(s-a)}{s-p+1}} r(x)^{(\theta_2-1)\frac{p(s-a)}{s-p+1}}V^{-\frac{p-a-1}{s-p+1}} \ud m \ud t.
	\end{aligned} \]
	Applying Lemma~\ref{lem-inequality-decom-HP4} (2) with $\epsilon=a/(s-p+1)$, we obtain
	\[ I_{p,1} \leq Ca^{\frac{s-a}{s-p+1}}\int_{R/2^{\frac{1}{\theta_2}}}^{\infty}r^{-(A a+1)\theta_2\frac{p(s-a)}{s-p+1} + \bar{l}_{p} + C_0 \frac{a}{s-p+1} - 1}\sibb{\ln r}^{k_{p}} \ud r. \]
	
	Introduce the variable $y=\alpha\ln r$ with
	\[ \alpha=(Aa+1)\theta_2 \frac{p(s-a)}{s-p+1}-\bar{l}_{p}-C_0\frac{a}{s-p+1}. \]
	By the choice of $A>\sibb{\theta_2p+C_0 +1}/\sibb{\theta_2 p(s-a)}$, we have
	\[ -\alpha= -(Aa+1)\theta_2\frac{p(s-a)}{s-p+1} +\frac{sp\theta_2}{s-p+1} +C_0 \frac{a}{s-p+1} \leq-\frac{a}{s-p+1}.  \]
	Thus, we get
	\[ I_{p,1}\leq Ca^{\frac{s-a}{s-p+1}}\int_{0}^{\infty}\frac{1}{\alpha}\ue^{-y}\sibb{\frac{y}{\alpha}}^{k_p}\ud y \leq Ca^{\frac{s-a}{s-p+1}-k_p-1}. \]
	
	Turning to $I_{p,2}$, we observe that on $E_{2^{1/\theta_2}jR}\minus E_{jR}$,
	\[ \phi\leq j^{-Aa\theta_2 }\text{ and } F(\Fgrad\eta_j)\leq\frac{\finv\theta_2 r(x)^{\theta_2-1}}{(jR)^{\theta_2}}. \]
	Using the second inequality in HP4 with $\epsilon=a/\sibb{s-p+1}$, we get
	\[ \begin{aligned}
		I_{p,2}&\leq Ca^{-\frac{(p-1)(s-a)}{s-p+1}}\int_{E_{2^{\frac{1}{\theta_2}}jR}\minus E_{jR}}j^{-Aa\theta_2 \frac{p(s-a)}{s-p+1}}\sibb{\frac{r(x)^{\theta_2-1}}{(jR)^{\theta_2}}}^{\frac{p(s-a)}{s-p+1}}V^{-\frac{p-a-1}{s-p+1}}\ud m\ud t\\
		&= Ca^{-\frac{(p-1)(s-a)}{s-p+1}}j^{-Aa\theta_2 \frac{p(s-a)}{s-p+1}}(jR)^{-\theta_2\frac{p(s-a)}{s-p+1}}\int_{E_{2^{\frac{1}{\theta_2}}jR}\minus E_{jR}}r(x)^{(\theta_2-1)\frac{p(s-a)}{s-p+1}}V^{-\frac{p-a-1}{s-p+1}}\ud m\ud t \\
		&\leq Ca^{-\frac{(p-1)(s-a)}{s-p+1}}j^{-Aa\theta_2 \frac{p(s-a)}{s-p+1}}(jR)^{-\theta_2\frac{p(s-a)}{s-p+1}}(jR)^{\bar l_p+C_0\frac{a}{s-p+1}}\sibb{\ln(jR)}^{k_p}.
	\end{aligned} \]
	Since $\bar{l}_p=sp\theta_2/\sibb{s-p+1}$ and $R^a=\ue$, the powers of $R$ cancel out, and we obtain
	\[ I_{p,2}\leq Ca^{-\frac{(p-1)(s-a)}{s-p+1}}j^{-Aa\theta_2 \frac{p(s-a)}{s-p+1}+\theta_2\frac{ap}{s-p+1}+C_0\frac{a}{s-p+1}}\sibb{\ln(jR)}^{k_p}. \]
	Again, by the choice of $A>(\theta_2 p+C_0 +1)/\sibb{\theta_2p(s-a)}$ and for $a$ small enough,
	\[ -Aa \theta_2 \frac{p(s-a)}{s-p+1}+\theta_2\frac{ap}{s-p+1}+C_0\frac{a}{s-p+1}\leq -\frac{a}{s-p+1}, \]
	and therefore
	\[ I_{p,2}\leq Ca^{-\frac{(p-1)(s-a)}{s-p+1}}j^{-\frac{a}{s-p+1}}\sibb{\ln(jR)}^{k_p}. \]
	
	The quantities $I_{q,1}$ and $I_{q,2}$ are handled in exactly the same way, replacing $p$ by $q$ and $k_p$ by $k_q$. So we obtain
	\[ I_{q,1}\leq Ca^{\frac{s-a}{s-q+1}-k_q-1},\qquad I_{q,2}\leq Ca^{-\frac{(q-1)(s-a)}{s-q+1}} j^{-\frac{a}{s-q+1}}\sibb{\ln(jR)}^{k_q}. \]
	
	Next we estimate $I_{t,1}$ and $I_{t,2}$. From the definition of $\phi$, we get
	\[ \norm{\pt_t\phi}\leq Ca\sibb{r(x)^{\theta_2}+t^{\theta_1}}^{-Aa-1}t^{\theta_1-1}. \]
	Thus, by Lemma~\ref{lem-inequality-decom-HP4} (1) with $\epsilon=a/(s-1)$, we have
	\[ \begin{aligned}
		I_{t,1}&=\int_{S\minus E_R}\norm{\pt_t\phi}^{\frac{s-a}{s-1}} V^{-\frac{1-a}{s-1}}\ud m\ud t\\
		&\leq C\int_{S\minus E_R} \sibb{a\sibb{r(x)^{\theta_2}+t^{\theta_1}}^{-Aa-1}t^{\theta_1-1}}^{\frac{s-a}{s-1}} V^{-\frac{1-a}{s-1}}\ud m\ud t \\
		&\leq Ca^{\frac{s-a}{s-1}}\int_{S\minus E_R} \sibb{r(x)^{\theta_2}+t^{\theta_1}}^{-(Aa+1)\frac{s-a}{s-1}} t^{(\theta_1-1)\frac{s-a}{s-1}} V^{-\frac{1-a}{s-1}}\ud m\ud t \\
		&\leq Ca^{\frac{s-a}{s-1}}\int_{R/2^{\frac{1}{\theta_2}}}^{\infty}
		r^{-\theta_2(Aa+1)\frac{s-a}{s-1}+\bar s_1+C_0\frac{a}{s-1}-1}
		\sibb{\ln r}^{s_2} \ud r.
	\end{aligned} \]
	Let $y=\alpha\ln r$ with
	\[ \alpha=\theta_2(Aa+1)\frac{s-a}{s-1}-\bar s_1-C_0\frac{a}{s-1}. \]
	For $A>(\theta_2 +C_0 +1)/\sibb{\theta_2(s-a)}$ and  small $a$, we see
	\[ -\alpha\leq -\frac{a}{s-1}. \]
	Hence, we have
	\[ I_{t,1} \leq C a^\frac{s-a}{s-1}\int_{R/2^{\frac{1}{\theta_2}}}^{\infty}\ue^{-y}\sibb{ \frac{y}{\alpha} }^{s_2} \frac{1}{\alpha} \ud y \leq C a^{\frac{s-a}{s-1}-s_2 -1}. \]
	
	For $I_{t,2}$, on the annulus $E_{2^{1/\theta_2}jR}\minus E_{jR}$, we have
	\[ \norm{\pt_t\eta_j}\leq\frac{\theta_1 t^{\theta_1-1}}{(jR)^{\theta_2}}. \]
	Then using the first inequality in HP4 with $\epsilon=a/\sibb{s-1}$, we get
	\[ \begin{aligned}
		I_{t,2}&\leq \int_{E_{2^{\frac{1}{\theta_2}}jR}\minus E_{jR}}(j^{-\theta_2 A a})^{\frac{s-a}{s-1}}
		\sibb{\frac{\theta_1 t^{\theta_1-1}}{(jR)^{\theta_2}}}^{\frac{s-a}{s-1}}V^{-\frac{1-a}{s-1}}\ud m\ud t\\
		&\leq C (jR)^{-\theta_2\frac{s-a}{s-1}} j^{-A\theta_2 a\frac{s-a}{s-1}}(jR)^{\bar s_1+C_0\frac{a}{s-1}}\sibb{\ln(jR)}^{s_2}\\
		&\leq C j^{-\theta_2\frac{s-a}{s-1}-Aa\theta_2\frac{s-a}{s-1}+\frac{s\theta_2}{s-1}+\frac{C_0 a}{s-1}}\sibb{\ln(jR)}^{s_2}.
	\end{aligned} \]
	Since $A>(\theta_2 + C_0 +1)/\sibb{\theta_2(s-a)}$, the exponent of $j$ is bounded above by $-a/\sibb{s-1}$, which yields
	\[ I_{t,2}\leq C j^{-\frac{a}{s-1}}\sibb{\ln(jR)}^{s_2}. \]
	
	Now substituting all the estimates into \eqref{eq-inequality-integral-HP4-main-temp1}, we have
	\[ \begin{aligned}
		\int_{S}Vu^{-a+s}\phi_j^{b}\ud m\ud t
		\leq&\,C\sibb{a^{\frac{s-a}{s-p+1}-k_p-1}+a^{-\frac{(p-1)(s-a)}{s-p+1}} j^{-\frac{a}{s-p+1}}(\ln(jR))^{k_p}}\\
		&\quad+C\sibb{a^{\frac{s-a}{s-q+1}-k_q-1}+a^{-\frac{(q-1)(s-a)}{s-q+1}} j^{-\frac{a}{s-q+1}}(\ln(jR))^{k_q}}\\
		&\quad+C\sibb{a^{\frac{s-a}{s-1}-s_2-1}+j^{-\frac{a}{s-1}}\sibb{\ln(jR)}^{s_2}}.
	\end{aligned} \]
	
	Letting $j\to\infty$, the terms containing $j$ disappear and we obtain
	\[ \int_{E_R}Vu^{-a+s}\ud m\ud t\leq C\sibb{a^{\frac{s-a}{s-p+1}-k_p-1}+a^{\frac{s-a}{s-q+1}-k_q-1}+a^{\frac{1-a}{s-1}-s_2}}. \]
	
	Because $s>p-1$, $a^a\leq C$ and \[ \frac{s-a}{s-p+1}-\frac{p-1}{s-p+1}=1-\frac{a}{s-p+1},\]
	we have
	\[ \int_{E_R}V u^{-a+s}\ud m\ud t\leq C\sibb{a^{\bar k_p-k_p}+a^{\bar k_q-k_q}+a^{\bar{s}_2-s_2}}. \]
	
	Finally, by the choice $k_p<\bar k_p$, $k_q<\bar k_q$, $s_2<\bar s_2$, using Fatou's lemma by letting $R\to\infty$, we conclude
	\[ \int_{S}V u^{s}\ud m\ud t=0, \]
	which implies $u\equiv0$ almost everywhere in $S$.

\subsection{The case of HP5}
	The case of HP5 is the parabolic counterpart of HP2. We first prove that $Vu^s$ is integrable over the whole space-time $S$ by using the parabolic H\"{o}lder-type estimate (Lemma~\ref{lem-integralIdentity-cutFunction-holder-parabolic}), and then show that the integral over shrinking exteriors forces the solution to be zero almost everywhere.
	
	Choose a fixed constant $b$ independent of $R$ such that
	\[ b>\max\set{\frac{s}{s-1}, \frac{2sp}{s-p+1}}, \]
	and let $a:=1/\ln R$ for every sufficiently large $R$. Hence, Lemma~\ref{lem-integralIdentity-cutFunction-holder-parabolic} can be applied with these $a, b$. Then the constant $C(b)$ is an absolute constant.
	
	Define the cut-off function $\phi_j = \phi \eta_j$ exactly as in the proof of the case of HP4.
	Choose the constant $A$ large enough so that
	\[ A >\max_{z\in \set{p,q}}\set{\frac{2\sibb{\theta_2 z+C_0 +1}}{\theta_2zs},\frac{2\sibb{\theta_2+C_0 +1}}{\theta_2s},\frac{C_0+1}{\theta_2(s-p+1)}}, \]
	where $C_{0}$ is the constant appearing in HP5. Since $a<1/2$ and $s>1$, we have
	\[ A >\max_{z\in \set{p,q}}\set{\frac{\theta_2 z+C_0 +1}{\theta_2z(s-a)},\frac{\theta_2+C_0 +1}{\theta_2(s-a)},\frac{C_0+1}{\theta_2(s-p+1)}}. \]
	
	With $R$ large enough, the parameters $a,b$ satisfy the hypotheses of Lemma~\ref{lem-integralIdentity-cutFunction-holder-parabolic}.
	Applying that lemma with $\phi_j$ and noticing that $K=\set{(x,t):\phi_j(x,t)=1}=E_R$ for every $j$, we obtain
	\begin{equation}\label{eq-inequality-integral-HP5-main-temp1}
		\begin{aligned}
			\int_{S} V u^{s}\phi_j^{b}\ud m\ud t &\leq C(b)(a^{-1}Q_{j})^{\frac{p-1}{p}}\sibb{\int_{S\minus K} V u^{s}\phi_j^{b}\ud m\ud t}^{\frac{(a+1)(p-1)}{sp}} J_p^{\frac{s-(a+1)(p-1)}{sp}} \\
			&\quad + C(b)(a^{-1}Q_{j})^{\frac{q-1}{q}}\sibb{\int_{S\minus K} V u^{s}\phi_j^{b}\ud m\ud t}^{\frac{(a+1)(q-1)}{sq}} J_q^{\frac{s-(a+1)(q-1)}{sq}}\\
			&\quad + C(b)\sibb{\int_{S\minus K} V u^{s}\phi_j^{b}\ud m\ud t}^{\frac{1}{s}} J_t^{\frac{s-1}{s}},
		\end{aligned}
	\end{equation}
	where
	\[ \begin{aligned}
		Q_{j}&:=a^{-\frac{(p-1)(s-a)}{s-p+1}}\int_{S} V^{-\frac{p-a-1}{s-p+1}} F^{\frac{p(s-a)}{s-p+1}}(\Fgrad\phi_j) \ud m \ud t + a^{-\frac{(q-1)(s-a)}{s-q+1}}\int_{S} V^{-\frac{q-a-1}{s-q+1}} F^{\frac{q(s-a)}{s-q+1}}(\Fgrad\phi_j) \ud m \ud t\\
		&\quad + \int_{S} V^{-\frac{1-a}{s-1}} |\pt_t \phi_j|^{\frac{s-a}{s-1}} \ud m \ud t,
	\end{aligned} \]
	\[ \begin{aligned}
		J_z &:= \int_{S\minus K} V^{-\frac{(a+1)(z-1)}{s-(a+1)(z-1)}} F^{\frac{sz}{s-(a+1)(z-1)}}(\Fgrad\phi_j) \ud m \ud t,\quad z\in\set{p,q},
	\end{aligned} \]
and
\begin{align*}
J_t := \int_{S\minus K} V^{-\frac{1}{s-1}} |\pt_t \phi_j|^{\frac{s}{s-1}} \ud m \ud t.
\end{align*}
	
	We first estimate $Q_{j}$.  Arguing exactly as for the estimates of $I_{p,1},I_{p,2},I_{t,1}$ and $I_{t,2}$ in the proof of the case of HP4, but using Lemma~\ref{lem-inequality-decom-HP5} instead of Lemma~\ref{lem-inequality-decom-HP4}, we obtain
\begin{align*}
 Q_{j} \leq& C + C a^{-\frac{(p-1)(s-a)}{s-p+1}}j^{-\frac{a}{s-p+1}}(\ln(jR))^{\bar k_{p}}\\
&	+ C a^{-\frac{(q-1)(s-a)}{s-q+1}}j^{-\frac{a}{s-q+1}}(\ln(jR))^{\bar k_{q}}
	+ C j^{-\frac{a}{s-1}}(\ln(jR))^{\bar s_{2}}.
\end{align*}
	Letting $j\to\infty$, the three terms containing $j$ disappear and we conclude
	\[ \limsup_{j\to\infty} Q_{j} \leq C. \]
	
	Next we turn to the estimates of $J_{p}$, $J_{q}$ and $J_{t}$.  We treat $J_{p}$ in detail; the treatment of the other two terms is completely analogous.
	Using the pointwise inequality
\begin{align*}
F^{k}(\Fgrad\phi_j)\leq C\sibb{F^{k}(\Fgrad\phi)+\phi^{k}F^{k}(\Fgrad\eta_j)}
\end{align*}
with $k>0$, we split
	\[ J_{p} \leq C\sibb{J_{p,1}+J_{p,2}}, \]
	where
	\[ J_{p,1}:=\int_{S\minus E_R}V^{-\frac{(a+1)(p-1)}{s-(a+1)(p-1)}}F^{\frac{sp}{s-(a+1)(p-1)}}(\Fgrad\phi)\ud m\ud t, \]
and
	\[ J_{p,2}:=\int_{E_{2^{\frac{1}{\theta_{2}}}jR}\minus E_{jR}}V^{-\frac{(a+1)(p-1)}{s-(a+1)(p-1)}}\phi^{\frac{sp}{s-(a+1)(p-1)}}F^{\frac{sp}{s-(a+1)(p-1)}}(\Fgrad\eta_j)\ud m\ud t. \]
	
	Set
	\[ \epsilon_{p} := \frac{as(p-1)}{(s-p+1)(s-(p-1)(a+1))}, \]
	so that $-(a+1)(p-1)/\sibb{s-(a+1)(p-1)}=-\bar{k}_{p}-\epsilon_{p}$.
	
	A direct computation shows that
	\[ \frac{sp}{s-(a+1)(p-1)}=p\epsilon_p+s_{p} \]
	with $s_{p}=sp/\sibb{s-p+1}$. Substituting the gradient bound
	\[ F\sibb{\Fgrad\phi(x,t)}\leq C a\sibb{r^{\theta_2}+t^{\theta_1}}^{-Aa-1}r^{\theta_2-1} \quad\text{a.e. in }S\minus E_{R} \]
	and the definition of $\epsilon_p$ into $J_{p,1}$, we obtain
	\[ \begin{aligned}
		J_{p,1} &\leq C a^{\frac{sp}{s-(a+1)(p-1)}} \int_{S\minus E_{R}} V^{-\bar k_{p}-\epsilon_p} \sibb{r^{\theta_2}+t^{\theta_1}}^{-(Aa+1)\frac{sp}{s-(a+1)(p-1)}} r^{(\theta_2-1)\frac{sp}{s-(a+1)(p-1)}}\ud m\ud t \\
		&= C a^{p\epsilon_p+s_{p}} \int_{S\minus E_{R}} V^{-\bar k_{p}-\epsilon_p} \sibb{r^{\theta_2}+t^{\theta_1}}^{-(Aa+1)(p\epsilon_p+s_{p})} r^{(\theta_2-1)(p\epsilon_p+s_{p})}\ud m\ud t .
	\end{aligned} \]
	
	Applying the second inequality in Lemma~\ref{lem-inequality-decom-HP5} (2) with $\epsilon=\epsilon_{p}$ and
	$f(r)=r^{-\theta_2(Aa+1)(p\epsilon_p+s_{p})}$, we obtain
	\[ \begin{aligned}
		J_{p,1} &\leq C a^{p\epsilon_p+s_{p}} \iint{R/2^{1/\theta_2}}{\infty}{r^{-\theta_2(Aa+1)(p\epsilon_p+s_{p})+\bar l_{p}+C_{0}\epsilon_p-1}\sibb{\ln r}^{\bar k_{p}}}{r}.
	\end{aligned} \]
	
	Define
	\[ \alpha:=\theta_2(Aa+1)\sibb{p\epsilon_p+s_{p}}-\bar{l}_{p}-C_{0}\epsilon_p. \]
	By the choice of $A>\sibb{C_0 +1}/\sibb{\theta_2(s-p+1)}$, we have
	\[ A\theta_2 \geq \frac{C_0(p-1)}{p(s-p+1)}+\frac{(s-(a+1)(p-1))}{(s-p+1)^2}.\]
	Hence, we get \[ \alpha \geq \frac{asp}{(s-p+1)^2}. \]
	The change of variables $y=\alpha\ln r$ gives
	\[ \begin{aligned}
		J_{p,1} &\leq C a^{p\epsilon_p+s_{p}} \int_{\alpha\ln(R/2^{1/\theta_2})}^{\infty} \ue^{-y}\sibb{\frac{y}{\alpha}}^{\bar{k}_{p}}
		\frac{\ud y}{\alpha}\leq C a^{p\epsilon_p+s_{p}} \alpha^{-1-\bar{k}_{p}} \int_{0}^{\infty}\ue^{-y}y^{\bar k_{p}}\ud y \\
		&\leq C a^{p\epsilon_p+s_{p}-\bar{k}_{p}-1}
		\leq C a^{\frac{sp}{s-(a+1)(p-1)}-\bar{k}_{p}-1}.
	\end{aligned} \]
	In the last step we used $\alpha\geq c a$ and the relation
\begin{align*}
p\epsilon_p+s_{p}=\frac{sp}{s-(a+1)(p-1)}.
\end{align*}
This completes the estimate of $J_{p,1}$.
	
	For $J_{p,2}$ we work on the annulus $E_{2^{1/\theta_2}jR}\minus E_{jR}$.
	Using $\phi\leq j^{-Aa\theta_2}$ and $F(\Fgrad\eta_j)\leq C r^{\theta_2-1}/(jR)^{\theta_2}$, we obtain
	\[ \begin{aligned}
		J_{p,2} \leq C j^{-Aa\theta_2(p\epsilon_p+s_{p})} (jR)^{-\theta_2(p\epsilon_p+s_{p})} \int_{E_{2^{1/\theta_2}jR}\minus E_{jR}} r^{(\theta_2-1)(p\epsilon_p+s_{p})} V^{-\bar k_p-\epsilon_p} \ud m\ud t .
	\end{aligned} \]
	For the integral in the right-hand side, we apply the third inequality in HP5 with $\epsilon=\epsilon_p$. This yields
	\[ \int_{E_{2^{1/\theta_2}jR}\minus E_{jR}} r^{(\theta_2-1)(p\epsilon_p+s_p)} V^{-\bar k_p-\epsilon_p} \ud m\ud t \leq C (jR)^{\bar l_p+C_0\epsilon_p}(\ln(jR))^{\bar{k}_p}. \]
	Since $\bar{l}_p=s_p\theta_2$, the powers of $jR$ combine as
	\[ -\theta_2\sibb{p\epsilon_p+s_{p}}+\bar{l}_p+C_0\epsilon_p= (C_0-p\theta_2 )\epsilon_{p}. \]
	Therefore, using $R^a=\ue$, we have
	\[ J_{p,2}\leq C j^{- Aa\theta_2(p\epsilon_p +s_p)+ (C_{0}-p\theta_{2})\epsilon_{p}}
	(\ln(jR))^{\bar k_p}. \]
	By our choice of $A>\sibb{C_0 +1}/\sibb{\theta_2(s-p+1)}$, we get
\[ A\theta_2 \geq \frac{s-(a+1)(p-1)}{(s-p+1)^2}+\frac{(p-1)(C_0-p\theta_2)}{p(s-p+1)}.\]
	As a result, the exponent of $j$ is strictly negative and larger in the absolute value than
	$asp/(s-p+1)^2$ for small $a$. Hence
	\[ J_{p,2}\leq C j^{-\frac{asp}{(s-p+1)^2}}(\ln(jR))^{\bar{k}_p}. \]
	
	Thus,
	\[ J_{p} \leq C a^{\frac{sp}{s-(a+1)(p-1)}-\bar{k}_{p}-1} + C j^{-\frac{asp}{(s-p+1)^2}}(\ln(jR))^{\bar{k}_{p}}, \]
	and consequently
	\[ \limsup_{j\to\infty} J_{p} \leq C a^{\frac{sp}{s-(a+1)(p-1)}-\bar k_{p}-1}. \]
	Exactly the same reasoning yields
	\[ J_{q} \leq C a^{\frac{sq}{s-(a+1)(q-1)}-\bar{k}_{q}-1} + C j^{-\frac{asq}{(s-q+1)^2}}\sibb{\ln(jR)}^{\bar{k}_{q}}, \]
	and
	\[ \limsup_{j\to\infty} J_{q} \leq C a^{\frac{sq}{s-(a+1)(q-1)}-\bar k_{q}-1}. \]
	
	The estimate of $J_{t}$ is similar to that of $J_{p}$. First, we have
	\[ J_{t}\leq C \sibb{J_{t,1}+J_{t,2}}, \]
	where
	\[ J_{t,1}:=\int_{S\minus E_R} V^{-\frac{1}{s-1}}|\pt_t\phi|^{\frac{s}{s-1}}\ud m\ud t, \]
and
\begin{align*}
	J_{t,2}:=\int_{E_{2^{1/\theta_2}jR}\minus E_{jR}} V^{-\frac{1}{s-1}}\phi^{\frac {s}{s-1}}|\pt_t\eta_j|^{\frac{s}{s-1}}\ud m\ud t.
\end{align*}
	Using the pointwise bounds
\begin{align*}
|\pt_t\phi|&\leq Ca (r^{\theta_2}+t^{\theta_1})^{-Aa-1}t^{\theta_1-1},\\
\phi&\leq j^{-A\theta_2a},\\
|\pt_t\eta_j|&\leq C t^{\theta_1-1}/(jR)^{\theta_2},
\end{align*}
	and applying Lemma~\ref{lem-inequality-decom-HP5} (1) with $\epsilon=0$ (justified by Fatou's lemma through $\epsilon\to0$) as in the HP4 case,
	we obtain
	\[ J_{t,1}\leq C,\quad J_{t,2}\leq C j^{-\frac{a}{s-1}}(\ln(jR))^{\bar s_2}. \]
	Consequently $\limsup_{j\to\infty} J_{t}\leq C$.
	
	Finally, substituting these estimates into \eqref{eq-inequality-integral-HP5-main-temp1}, we have
	\begin{align}\label{eq-inequality-integral-HP5-main-temp2}
		\int_{S}Vu^{s}\phi_j^b\ud m\ud t
		&\leq C\sibb{\int_{S\minus E_R}Vu^{s}\phi_j^b\ud m\ud t}^{\frac1s}+Ca^{\frac{a(p-1)}{p(s-p+1)}}\sibb{\int_{S\minus E_R}Vu^{s}\phi_j^b\ud m\ud t}^{\frac{(a+1)(p-1)}{sp}}\nonumber\\
		&\quad +Ca^{\frac{a(q-1)}{q(s-q+1)}}\sibb{\int_{S\minus E_R}Vu^{s}\phi_j^b\ud m\ud t}^{\frac{(a+1)(q-1)}{sq}}.
	\end{align}
As in the discussion of Section~\ref{sec-the case of HP2}, we then obtain
	\begin{align*}
		1+\int_{S}Vu^{s}\phi_j^b\ud m\ud t
		&\leq C\sibb{1+\int_{S\minus E_R}Vu^{s}\phi_j^b\ud m\ud t}^{\frac1s}+Ca^{\frac{a(p-1)}{p(s-p+1)}}\sibb{1+\int_{S\minus E_R}Vu^{s}\phi_j^b\ud m\ud t}^{\frac{(a+1)(p-1)}{sp}}\nonumber\\
		&\quad +Ca^{\frac{a(q-1)}{q(s-q+1)}}\sibb{1+\int_{S\minus E_R}Vu^{s}\phi_j^b\ud m\ud t}^{\frac{(a+1)(q-1)}{sq}}.
	\end{align*}
	
	Set
	\[ \gamma=\max\set{\frac{(a+1)(p-1)}{sp},\frac{(a+1)(q-1)}{sq},\frac{1}{s}}. \]
	Since $a^{a(p-1)/(p(s-p+1))}<C$, $a^{a(q-1)/(q(s-q+1))}<C$ and $\gamma< 1$ for sufficiently small $a$, the local integrability of $V u^{s}\in L_{\mathrm{loc}}^{1}(S)$ implies
	\[ \begin{aligned}
		\sibb{1+\int_{S}{V u^{s}\phi_j^{b}}\ud m\ud t}^{1-\gamma}&\leq C.
	\end{aligned} \]
	Letting $j\to\infty$, we deduce, for any sufficiently large $R$,
	\[ \int_{E_R}V u^{s}\ud m\ud t\leq C. \]
	
	 Furthermore, letting $j\to\infty$ in \eqref{eq-inequality-integral-HP5-main-temp2}, we see
	\begin{equation}\label{eq-inequality-integral-HP5-temp3}
		\begin{aligned}
			\int_{E_R}V u^{s}\ud m\ud t
			&\leq C\sibb{\int_{S\minus E_R}Vu^{s}\ud m\ud t}^{\frac1s}+C\sibb{\int_{S\minus E_R}{V u^{s}}\ud m\ud t}^{\frac{(a+1)(p-1)}{sp}}\\
			&\quad+C \sibb{\int_{S\minus E_R}{V u^{s}}\ud m\ud t}^{\frac{(a+1)(q-1)}{sq}}.
		\end{aligned}
	\end{equation}
	
	Finally, let $R\to\infty$ in \eqref{eq-inequality-integral-HP5-temp3}. The integral over $S\minus E_R$ tends to zero because the whole integral $\int_{S} V u^{s}dm dt$ is finite. The right-hand side therefore tends to zero, while the left-hand side increases to $\int_{S} V u^{s}dm dt$.  Hence
	\[ \int_{S}Vu^{s}\ud m\ud t = 0, \]
	which forces $u=0$ a.e. in $S$.
	
\section{Final Remarks}
	We have proved that on a forward geodesically complete noncompact Finsler measure space with finite reversibility, any nonnegative weak solution of the $(p,q)$-Laplacian elliptic inequality (resp. parabolic inequality) vanishes almost everywhere, provided the potential satisfies one of the integral growth conditions HP1--HP3 (resp. HP4--HP5). Next we make two remarks here.
	
	\begin{rem}
		The weighted Riemannian manifolds are included in our framework as a special case. Indeed, let $(M,g)$ be a complete noncompact Riemannian manifold and let
		\[ F(x,X)=\sqrt{g_x(X,X)},\ (x,X)\in TM, \qquad \ud m=\ue^{-f}\ud\mathrm{vol}_g,\]
		where $f\in C^\infty(M)$ and $\ud\mathrm{vol}_g$ is the volume form induced by $g$.
		Then $(M,F,m)$ is a reversible Finsler measure space. It is not hard to check that the Finsler gradient of a function agrees with the Riemannian gradient.
		Thus, the corresponding weighted elliptic and parabolic Liouville theorems can be obtained on weighted Riemannian manifolds.
	\end{rem}
	
	On the other hand, we note that the proofs in this paper work essentially unchanged for inequalities that contain a finite sum of Finsler $p_i$-Laplacians with the corresponding Finsler structures. More precisely, we have the following remark.
	\begin{rem}
		Here, we only present the elliptic case in some details. The parabolic case is completely analogous.
		
		Let $F_1,\dots,F_k$ be smooth Finsler structures on the given Finsler measure space $(M,F,m)$ and suppose there exist constants $C_i>0$, $1\leq i\leq k$, such that
		\begin{equation}\label{eq:dual-compare}
			F_i^{*}(x,\xi)\leq C_i\,F^{*}(x,\xi)\text{ for all }(x,\xi)\in T^{*}M,\ i=1,\dots,k.
		\end{equation}
		Consider the elliptic inequality
		\begin{equation}\label{eq:multi-elliptic}
			\sum_{i=1}^{k}\laplace^{m,F_i}_{p_i}u+V(x)u^{s}\leq 0,\quad p_i>1,\quad s>\max_{1\leq i\leq k}\{p_i\}-1,
		\end{equation}
		where $V\in L^{1}_{\mathrm{loc}}(M)$ is strictly positive almost everywhere.
		
		A nonnegative function $u\in W^{1,\max_{1\leq i\leq k} \{p_i\}}_{\mathrm{loc}}(M)$ is a weak solution of \eqref{eq:multi-elliptic} if
		\begin{equation*}
			\qint{M}{Vu^{s}\psi}{m} \leq\sum_{i=1}^{k}\qint{M}{F_i^{p_i-2}(\grad_{F_i}u)\ud\psi(\grad_{F_i}u)}{m}
		\end{equation*}
		for every nonnegative test function $\psi\in W^{1,\max_{1\leq i\leq k} \{p_i\}}(M)\intersect L^\infty(M)$ with compact support.
		
		Since $\ud\psi(\grad_{F_i}u)\leq F_i(\grad_{F_i}\psi)F_i(\grad_{F_i}u)$, the same arguments as before yield Lemmas~\ref{lem-integralIdentity-cutFunction-young} and~\ref{lem-integralIdentity-cutFunction-holder}.
		More precisely, under the conditions of $a$ and $b$ stated as in those lemmas, we have the Caccioppoli-type inequality
		\begin{equation}\label{eq:Caccioppoli-multi}
			\begin{aligned}
				&\sum_{i=1}^{k}a\qint{M}{u^{-1-a}\phi^{b}F_i^{p_i}(\grad_{F_i}u)}{m} +\qint{M}{Vu^{s-a}\phi^{b}}{m}\\
				&\leq\sum_{i=1}^{k} C(b) a^{-\frac{(p_i-1)(s-a)}{s-p_i+1}} \qint{M}{V^{-\frac{p_i-a-1}{s-p_i+1}}F_i^{\frac{p_i(s-a)}{s-p_i+1}}(\grad_{F_i}\phi)}{m},
			\end{aligned}
		\end{equation}
		and the H\"{o}lder-type inequality
		\begin{equation}\label{eq:Holder-multi}
			\begin{aligned}
				\qint{M}{Vu^{s}\phi^{b}}{m}&\leq\sum_{i=1}^{k} C(b)(a^{-1}Q)^{\frac{p_i-1}{p_i}}\sibb{\qint{M\minus K}{Vu^{s}\phi^{b}}{m}}^{\frac{(a+1)(p_i-1)}{sp_i}} J_{p_i}^{\frac{s-(a+1)(p_i-1)}{sp_i}},
			\end{aligned}
		\end{equation}
		where $K=\set{x:\phi(x)=1}$,
		\[ \begin{aligned}
			Q&=\sum_{i=1}^{k} a^{-\frac{(p_i-1)(s-a)}{s-p_i+1}}\qint{M}{V^{-\frac{p_i-a-1}{s-p_i+1}} F_i^{\frac{p_i(s-a)}{s-p_i+1}}(\grad_{F_i}\phi)}{m},\\
			J_{p_i} &= \qint{M\minus K}{V^{-\frac{(a+1)(p_i-1)}{s-(a+1)(p_i-1)}}F_i^{\frac{p_is}{s-(a+1)(p_i-1)}}(\grad_{F_i}\phi)}{m} .
		\end{aligned} \]
		
		For any $\phi\in C^{1}(M)$, the inequality~\eqref{eq:dual-compare} implies
		\[ F_i(\grad_{F_i}\phi)=F_i^{*}(\ud\phi)\le C_iF^{*}(\ud\phi)=C_iF(\grad_F\phi). \]
		Consequently, each integral containing $\grad_{F_{i}}\phi$ is bounded above by the corresponding integral involving $\grad_{F}\phi$. Hence, the right-hand sides of \eqref{eq:Caccioppoli-multi} and \eqref{eq:Holder-multi} can be bounded by the analogous terms of $F$ up to multiplicative constants.
		
		Lemma~\ref{lem-inequality-decom-elliptic} depends only on the Finsler measure space $(M,F,m)$ and HP1--HP3, and therefore it remains unchanged.
		
		Hence, the proof of Theorem~\ref{thm-main-elliptic} goes through without any essential change.
		First, we use the same cut-off functions built from $r(x):=d_F(x_0,x)$, and apply \eqref{eq:Caccioppoli-multi} or \eqref{eq:Holder-multi} to obtain the inequality with respect to $F_{i}$.
		Next, we replace the terms of $\grad_{F_{i}}$ by those of $\grad_{F}$ via \eqref{eq:dual-compare}, and estimate the integrals of $F$ exactly as in the $(p,q)$-Laplacian case. The only extra requirement is that $V$ satisfy one of the conditions HP1, HP2 and HP3 for every exponent $z=p_i$, $i=1,\dots,k$.
		
		The parabolic case can be handled similarly. Therefore, we can extend the Liouville theorems to the case of any finite sum of Finsler $p_i$-Laplacians.
	\end{rem}

\end{document}